\newtheorem{theorem}{Theorem}[section]
\newtheorem{lemma}[theorem]{Lemma}
\newtheorem{proposition}[theorem]{Proposition}
\newtheorem{definition}[theorem]{Definition}
\newtheorem{remark}[theorem]{Remark}
\let\originalleft\left
\let\originalright\right
\renewcommand{\left}{\mathopen{}\mathclose\bgroup\originalleft}
\renewcommand{\right}{\aftergroup\egroup\originalright}
\newcommand{\Tr}{\mathop{\mathrm{Tr}}}
\renewcommand{\d}{\/\mathrm{d}\/}
\def\w{\textbf{W}^{\varepsilon}_{{\theta}^{\varepsilon}}}
\def\e{\varepsilon}
\def\S{\mathrm{S}}
\def\T{\mathrm{T}}
\def\L{\mathbb{L}}
\def\A{\mathrm{A}}
\def\I{\mathrm{I}}
\def\C{\mathrm{C}}
\def\f{\mathbf{f}}
\def\B{\mathrm{B}}
\def\D{\mathrm{D}}
\def\y{\mathbf{y}}
\def\q{\mathbf{q}}
\def\X{\mathbb{X}}
\def\x{\mathbf{x}}
\def\p{\mathbf{p}}
\def\z{\mathbf{z}}
\def\v{\mathbf{v}}
\def\V{\mathbb{V}}
\def\w{\mathbf{w}}
\def\W{\mathrm{W}}
\def\G{\mathrm{G}}
\def\Q{\mathrm{Q}}
\def\N{\mathbb{N}}
\def\V{\mathbb{V}}
\def\wi{\widetilde}
\def\Q{\mathrm{Q}}
\def\P{\mathrm{P}}
\def\u{\mathbf{u}}
\def\H{\mathbb{H}}
\def\n{\mathbf{n}}
\newcommand{\R}{\mathbb{R}}
\renewcommand{\d}{\/\mathrm{d}\/}
\newcommand{\Addresses}{{
		\footnote{
			
			\noindent \textsuperscript{1}Department of Mathematics, Indian Institute of Technology Roorkee-IIT Roorkee,
			Haridwar Highway, Roorkee, Uttarakhand 247667, INDIA.\par\nopagebreak
			\noindent  \textit{e-mail:} \texttt{maniltmohan@iitr.ac.in, maniltmohan@gmail.com.}
			
			\noindent \textsuperscript{*}Corresponding author.

			\textit{Key words:} convective Brinkman-Forchheimer equations, Global attractors,  Exponential Attractors, Quasi-stability, Fractal and Hausdroff dimensions, Upper semicontinuity.
			
			Mathematics Subject Classification (2010): 37L30, 35Q35, 35Q30, 35B40.

}}}
\begin{document}
	
	\title[Global and  Exponential Attractors for CBF Equations]{Asymptotic analysis of the 2D convective Brinkman-Forchheimer equations in unbounded domains: Global attractors and upper semicontinuity
		\Addresses}
\author[M. T. Mohan ]{Manil T. Mohan\textsuperscript{1*}}
	
	\maketitle

	\begin{abstract}
	In this work, we carry out the asymptotic analysis of  the two dimensional  convective Brinkman-Forchheimer (CBF) equations, which characterize the motion of incompressible fluid flows in a saturated porous medium. We establish the existence of a global attractor in both bounded (using compact embedding) and  Poincar\'e domains (using asymptotic compactness property). In Poincar\'e domains, for $r=1,2$ and $3$, the estimates for Hausdorff as well as fractal	dimensions of the global attractors  are also obtained.  We then show an upper semicontinuity of global attractors for the 2D CBF equations. We consider an expanding sequence of simply connected,	bounded and smooth subdomains $\Omega_m$ of the Poincar\'e domain $\Omega$ such that $\Omega_m\to\Omega$ as $m\to\infty$. If $\mathscr{A}_m$ and $\mathscr{A}$ are the global attractors of the 2D CBF equations corresponding to $\Omega$ and $\Omega_m$, respectively, then we show that for  large enough $m$, the global attractor $\mathscr{A}_m$ enters  into any  neighborhood $\mathcal{U}(\mathscr{A})$ of $\mathscr{A}$. The presence of Darcy term in the CBF equations  helps us to obtain the above mentioned results in general unbounded domains also. Finally, we discuss about the quasi-stability property of the semigroup associated with the 2D CBF equations in bounded domains and establish the existence of finite fractal dimensional global as well as exponential attractors for $r\in[1,\infty)$. 
	\end{abstract}

	\section{Introduction}\label{sec1}\setcounter{equation}{0}
 Let $\Omega\subset\R^2$ be an open connected set (bounded or Poincar\'e domain).  We take  smooth boundary in $\R^2$ in the case of bounded domains. We say that $\Omega$ (can be unbounded) is a  Poincar\'e domain (see page 306, \cite{Te2}) if the Poincar\'e inequality is satisfied, that is, there exists  constant $\lambda_1 >0$ such that 
\begin{align}\label{2.1}
\int_{\Omega} |\phi(x)|^2 \d x \leq \frac{1}{\lambda_1}\int_{\Omega} |\nabla \phi(x)|^2 \d x,  \ \text{ for all } \  \phi \in \H^{1}_0 (\Omega).
\end{align}
Note that if $\Omega$ is bounded in some direction, then the Poincar\'e inequality holds. For $x=(x_1,x_2)\in\mathbb{R}^2$, one can think $\Omega$ is included in a region of the form  $0<x_1<L$. 
This work is concerned about the asymptotic analysis of solutions of the two dimensional  convective Brinkman-Forchheimer (CBF)  equations in bounded domains, Poincar\'e domains and general unbounded domains. 
Let us denote by $\u(x,t):\Omega\times[0,T]\to\R^2$, the \emph{velocity} of the fluid, $p(x,t):\Omega\times[0,T]\to \R$, the \emph{pressure} of the fluid and $\f(x,t):\Omega\times[0,T]\to\R^2$, an external time-dependent force. The CBF  equations are given by 
\begin{equation}\label{1}
\left\{
\begin{aligned}
\frac{\partial \u}{\partial t}-\mu \Delta\u+(\u\cdot\nabla)\u+\alpha\u+\beta|\u|^{r-1}\u+\nabla p&=\mathbf{f}, \ \text{ in } \ \Omega\times(0,T), \\ \nabla\cdot\u&=0, \ \text{ in } \ \Omega\times(0,T), \\
\u&=\mathbf{0}\ \text{ on } \ \partial\Omega\times(0,T), \\
\u(0)&=\u_0 \ \text{ in } \ \Omega,\\
\int_{\Omega}p(x,t)\d x&=0, \ \text{ in } \ (0,T).
\end{aligned}
\right.
\end{equation}
 The final condition in \eqref{1} is imposed for the uniqueness of the pressure $p$. The constant $\mu$ represents the positive Brinkman coefficient (effective viscosity), the positive constants $\alpha$ and $\beta$ represent the Darcy (permeability of porous medium) and Forchheimer (proportional to the porosity of the material) coefficients, respectively. The absorption exponent $r\in[1,\infty)$. The CBF equations \eqref{1} describe the motion of incompressible fluid flows in a saturated porous medium (cf. \cite{DNAB}).  The model given above is recognized to be more accurate when the flow velocity is too large for the Darcy's law to be valid alone, and in addition, the porosity is not too small.   It can be easily seen that for $\alpha=\beta=0$, we obtain the classical 2D Navier-Stokes equations. The nonlineartiy of the form $|\u|^{r-1}\u$ can be found in tidal dynamics as well as non-Newtonian fluid flows (see \cite{CTA,MTM3,MTM5}, etc and the references therein).  The global solvability of the system \eqref{1} in two and three dimensional bounded domains is available in \cite{SNA,VKK,MTM1}, etc.

The study of long time behavior of nonlinear dynamical system is an interesting branch of applied mathematics and it is essential in understanding many natural phenomena. For an extensive study on infinite dimensional dynamical systems in mathematical physics, the interested referred to see \cite{ICh,Te2}, etc. The existence of global attractors as well as their finite dimensionality results for the two dimensional Navier-Stokes equations (NSE) in bounded domains have been obtained in \cite{OAL1,Te2}, etc.   The author in \cite{RR} showed that  the global attractor exists for the 2D NSE in Poincar\'e domains with  forces $\f\in\V'$. In this case, the finite dimensionality of the global attractor is also obtained in  \cite{RR}. The upper semicontinuity of the global attractors  for the 2D NSE is established in \cite{ZD}. For an extensive literature on the existence of global attractors and related problems for the 2D NSE, the interested readers are referred to see \cite{FA,FA1,AVB,JMB,ICh,PCCF1,PCCF,CFM,NJ,OAL1,RR,JCR1,SS7,Te2} etc, and the references therein.

The authors in \cite{AIKP} considered finite energy solutions for the 2D NSE with linear damping  in the plane and showed that the corresponding dynamical system possesses a global attractor. Upper bounds for the number of asymptotic degrees (determining modes and nodes) of freedom for the 2D NSE with linear damping in periodic domains is obtained in \cite{AAI}.  The authors in \cite{VKK} established  the existence of regular dissipative solutions and global attractors for the 3D Brinkmann-Forchheimer equations with a nonlinearity of arbitrary polynomial growth rate. Using  of the theory of evolutionary systems due to Cheskidov \cite{ACh}, the authors in \cite{KWH} showed the existence of a strong global attractor for the critical 3D convective Brinkman-Forchheimer equations ($r=3$). The existence of a global attractor for the Brinkman-Forchheimer equations in bounded domains have been discussed in the works \cite{DUg,BWSL}, etc.  The existence of global attractors for the dynamical systems generated by weak as well as strong solutions of the 3D NSE with damping in bounded domains has been obtained in the works \cite{FLBY,DPJV,XSYH1,XSYH}, etc. As mentioned  in the paper \cite{VKK}, the major difficulty in working with bounded domains $\Omega\subset\mathbb{R}^3$ is that $\mathcal{P}(|\u|^{r-1}\u)$ (here $\mathcal{P}:\L^p(\Omega)\to\H,$ $p\in[2,\infty)$, is the Helmholtz-Hodge projection) need not be zero on the boundary, and $\mathcal{P}$ and $-\Delta$ are not necessarily commuting (for a counter example, see Example 2.19, \cite{JCR4}). Moreover, $\Delta\u\big|_{\partial\Omega}\neq 0$ in general and the term with pressure will not disappear (see \cite{VKK}), while taking the inner product with $\Delta\u$ to the first equation in \eqref{1}. Therefore, the equality (\cite{KWH})
\begin{align}\label{13}
&\int_{\Omega}(-\Delta\u(x))\cdot|\u(x)|^{r-1}\u(x)\d x\nonumber\\&=\int_{\Omega}|\nabla\u(x)|^2|\u(x)|^{r-1}\d x+4\left[\frac{r-1}{(r+1)^2}\right]\int_{\Omega}|\nabla|\u(x)|^{\frac{r+1}{2}}|^2\d x\nonumber\\&=\int_{\Omega}|\nabla\u(x)|^2|\u(x)|^{r-1}\d x+\frac{r-1}{4}\int_{\Omega}|\u(x)|^{r-3}|\nabla|\u(x)|^2|^2\d x,
\end{align}
may not be useful in the context of bounded domains. But in periodic domains or whole space the above equality holds true.  Due to this technical difficulty, it appears to the author that some of the results obtained in the above works may not be true in bounded domains.  

 In this work, we study the asymptotic analysis of solutions of \eqref{1} under the following  three cases:
\begin{enumerate}
	\item [(i)]  the domain $\Omega\subset\R^2$ is bounded with smooth boundary $\partial\Omega$,
	\item [(ii)]  an unbounded open set in $\R^2$ without any  regularity assumption on its boundary $\partial\Omega$ and with the only assumption that the Poincar\'e inequality \eqref{2.1} holds on it (Poincar\'e domain), 
	\item [(iii)] a general unbounded domain. 
\end{enumerate} We show the existence of global attractors in both bounded and  Poincar\'e domains for the system \eqref{1}. In Poincar\'e domains case, we are able to prove that the solution map $\S(t) : \H \to \H$ is Fr\'echet differentiable with respect to the initial data for the absorption exponent $r=1,2,3$ and hence we obtain estimates for the Hausdorff and Fractal dimensions of such attractors.  Then, we establish an upper semicontinuity of global attractors for the 2D CBF equations. We take an expanding sequence of simply connected, bounded and smooth subdomains $\Omega_m$ of the Poincar\'e domain $\Omega$. If $\mathscr{A}_m$ and $\mathscr{A}$ are the global attractors of \eqref{1} corresponding to $\Omega$ and $\Omega_m$, respectively, then we show that for  large enough $m$, the global attractor $\mathscr{A}_m$ enters  into any  neighborhood $\mathcal{U}(\mathscr{A})$ of $\mathscr{A}$. We remark that the above mentioned results are true in general unbounded domains also (the presence of Darcy term in \eqref{1} helps us to get such results). Finally, we discuss about the quasi-stability of the semigroup $\S(t) : \H \to \H$ and establish the existence of finite fractal dimensional global as well as exponential attractors for the  2D CBF equations  in bounded domains for all $r\geq 1$. 

The rest of the paper is organized as follows. In the next section, we provide an  abstract formulation of the system \eqref{1} and give the necessary function spaces needed to prove the global solvability results of the system \eqref{1}. We also discuss about the existence and uniqueness of the weak as well as strong solutions to the system \eqref{1} in the same section. In section \ref{sec3},  for $\f\in\V'$, we show that the system \eqref{1} in bounded domains possesses a global attractor, using the compact embedding of $\V\subset\H$ (Remark \ref{rem3.7} and Theorem \ref{thm3.4}). The existence of a global attractor for the system \eqref{1} in Poincar\'e domains is obtained in section \ref{sec4}, using asymptotic compactness property (Theorem \ref{thm4.3}). For the absorption exponent $r=1,2,3$, the estimates for  Hausdorff as well as Fractal	dimensions of the global attractor for the system \eqref{1} in Poincar\'e domains is obtained in section \ref{sec5} (Theorem \ref{thm52}).  In section \ref{sec7}, we prove an upper semicontinuity of global attractors for the 2D CBF equations (Lemma \ref{lem4.2} and Theorem \ref{thm4.1}). In the final section, we describe the quasi-stability property of the semigroup associated with the 2D CBF equations in bounded domains for the absorption exponent $r\in[1,\infty)$ and establish the existence of finite fractal dimensional global as well as exponential attractors (Theorem \ref{thm614}) using Ladyzhenskaya's squeezing property (Proposition \ref{prop6.12}) and the results available in \cite{ICh}.

\section{Mathematical Formulation}\label{sec2}\setcounter{equation}{0}
This section is devoted for providing  the necessary function spaces needed to obtain the global solvability results of the system \eqref{1} as well as to discuss about the well-posedness of the system \eqref{1}.

\subsection{Functional setting}
Let us define  the space $$\mathscr{V}:=\left\{\u\in\C_0^{\infty}(\Omega;\R^2):\nabla\cdot\u=0\right\},$$ where  $\C_0^{\infty}(\Omega;\R^2)$ is the space of all infinitely differentiable functions with compact support in $\Omega$.  Let $\H$ and $\V$ denote the completion of $\mathscr{V}$ in $\mathrm{L}^2(\Omega;\R^2)$ and $\mathrm{H}^1(\Omega;\R^2)$ norms respectively. For bounded domains, under some smoothness assumptions on the boundary, we characterize the spaces $\H$ and $\V$ as 
$$\H:=\left\{\u\in\L^2(\Omega):\nabla\cdot\u=0,\u\cdot\n\big|_{\partial\Omega}=0\right\},$$  with the norm defined by $\|\u\|_{\H}^2:=\int_{\Omega}|\u(x)|^2\d x,
$ where $\n$ is the outward normal to the boundary $\partial\Omega$  and $\u\cdot\n\big|_{\partial\Omega}$ should be understood in the sense of trace in $\H^{-1/2}(\partial\Omega)$ (cf. Theorem 1.2, Chapter 1, \cite{Te}), 
$$\wi\L^p:=\left\{\u\in\L^p(\Omega):\nabla\cdot\u=0,\u\cdot\n\big|_{\partial\Omega}=0\right\},$$  with the norm denoted by $\|\u\|_{\wi\L^p}^2:=\int_{\Omega}|\u(x)|^p\d x,
$ for $p\in(2,\infty)$ and 
$$\V:=\left\{\u\in\H^1(\Omega):\nabla\cdot\u=0\right\}, $$ with the norm given by $ \int_{\Omega}|\u(x)|^2\d x+\int_{\Omega}|\nabla\u(x)|^2\d x.
$ Using the Poincar\'e inequality \eqref{2.1}, one can easily see that this norm is equivalent to the norm $ \|\u\|_{\V}^2:=\int_{\Omega}|\nabla\u(x)|^2\d x.$
 The inner product in the Hilbert space $\H$ is denoted by $(\cdot,\cdot)$, in $\V$ is represented by $[\cdot,\cdot]$ and the induced duality, for instance between the spaces $\V$  and its dual $\V'$, and $\wi\L^p$ and its dual $\wi\L^{\frac{p}{p-1}}$ is expressed by $\left<\cdot,\cdot\right>$. Note that $\V$ is densely and continuously embedded into $\H$ and $\H$ can be identified with its dual $\H'$ and we have the  \emph{Gelfand triple}: $\V\subset	\H\equiv\H'\subset\V'.$ Remember that for a bounded domain $\Omega$, the embedding of $\V\subset\H$ is compact. In the rest of the paper, we also use the notation $\mathbb{H}^2(\Omega):=\mathrm{H}^2(\Omega;\mathbb{R}^2)$ for the second order Sobolev spaces. 

\subsection{Linear operator}
It is well-known  that every vector field $\u\in\mathbb{L}^p(\Omega)$, for $1<p<\infty$ can be uniquely represented as $\u=\v+\nabla q,$ where $\v\in\mathbb{L}^p(\Omega)$ with $\mathrm{div \ }\v=0$ in the sense of distributions in $\Omega$ with $\v\cdot\n=0$ on $\partial\Omega$ and $q\in\mathrm{W}^{1,p}(\Omega)$ (Helmholtz-Weyl or Helmholtz-Hodge decomposition, cf. \cite{DFHM,HKTY}, etc). For smooth vector fields in $\Omega$, such a decomposition is an orthogonal sum in $\mathbb{L}^2(\Omega)$. Note that $\u=\v+\nabla q$ holds for all $\u\in\mathbb{L}^p(\Omega)$, so that we can define the projection operator $\mathcal{P}_p$  by $\mathcal{P}_p\u = \v$. For simplicity, we use the notation $\mathcal{P}$ instead of $\mathcal{P}_p$ in the rest of the paper.  Let us define
\begin{equation*}
\A\u:=-\mathcal{P}\Delta\u,\ \u\in\D(\A):=\V\cap\H^{2}(\Omega).
\end{equation*}
Remember that the operator $\A$ is a non-negative, self-adjoint operator in $\H$  and \begin{align}\label{2.7a}\left<\A\u,\u\right>=\|\u\|_{\V}^2,\ \textrm{ for all }\ \u\in\V, \ \text{ so that }\ \|\A\u\|_{\V'}\leq \|\u\|_{\V}.\end{align}
\begin{remark}
It is well known  that for a bounded domain $\Omega$, the operator $\A$ is invertible and its inverse $\A^{-1}$ is bounded, self-adjoint and compact in $\H$. Hence the spectrum of $\A$ consists of an infinite sequence $0< \lambda_1\leq \lambda_2\leq\ldots\leq \lambda_k\leq \ldots,$ with $\lambda_k\to\infty$ as $k\to\infty$ of eigenvalues, and there exists an orthogonal basis $\{e_k\}_{k=1}^{\infty} $ of $\H$ consisting of eigenvectors of $\A$ such that $\A e_k =\lambda_ke_k$,  for all $ k\in\mathbb{N}$.  Since $\u=\sum\limits_{j=1}^{\infty}(\u,e_j)e_j$, we have  $\A\u=\sum\limits_{j=1}^{\infty}\lambda_j(\u,e_j)e_j$. Thus, it is immediate that 
\begin{align*}
\|\nabla\u\|_{\mathbb{H}}^2=\langle \A\u,\u\rangle =\sum_{j=1}^{\infty}\lambda_j|(\u,e_j)|^2\geq \lambda_1\sum_{j=1}^{\infty}|(\u,e_j)|^2=\lambda_1\|\u\|_{\mathbb{H}}^2,
\end{align*}
for all $\u\in\V$, which is the \emph{Poincar\'e inequality}.  Note that the constant appearing in \eqref{2.1} is different from the first eigenvalue $\lambda_1$ and for convenience, we are using the same notation. In the rest of the paper, these constants will be chosen according to the context. We  use the following interpolation inequality in the sequel: If for some $\alpha,\beta,\gamma\in\mathbb{R}$ with  $\gamma=\theta\alpha+(1-\theta)\beta$ and  $\theta\in[0,1]$, then  we have (see \cite{ICh})
\begin{align}\label{inp}
\|\A^{\gamma}\u\|_{\H}\leq \|\A^{\alpha}\u\|_{\H}^{\theta}\|\A^{\beta}\u\|_{\H}^{(1-\theta)}, \ \text{ for all }\ \u\in \D(\A^{\alpha})\cap\D(\A^{\beta})\cap\D(\A^{\gamma}). 
\end{align}
\end{remark}

\begin{remark}
	The following important estimates are needed in the rest of the paper:
	\begin{enumerate}
		\item [(i)] Assume $1\leq s\leq r\leq t\leq \infty$, $\theta\in(0,1)$ such that $\frac{1}{r}=\frac{\theta}{s}+\frac{1-\theta}{t}$ and $\u\in\L^s(\mathcal{O})\cap\L^t(\mathcal{O})$, then we have 
		\begin{align}\label{211}
		\|\u\|_{\L^r}\leq\|\u\|_{\L^s}^{\theta}\|\u\|_{\L^t}^{1-\theta}. 
		\end{align}
	\item [(ii)] Using Ladyzhenskaya inequality (Lemma 1, Chapter 1, \cite{OAL}), we have 
		\begin{align}
	\|\u\|_{\L^4}\leq 2^{1/4}\|\u\|^{\frac{1}{2}}_{\L^2}\|\nabla\u\|^{\frac{1}{2}}_{\L^2},\ \u\in\H_0^1(\Omega).
	\end{align}
	\item [(iii)] Applying  Gagliardo-Nirenberg inequality  (Theorem 1, \cite{LN}), we obtain 
		\begin{align}\label{Gen_lady}
	\|\u\|_{\L^{p} } \leq C \|\u\|^{\frac{2}{p}}_{\L^{2} } \|\nabla \u\|^{1 - \frac{2}{p}}_{\L^{2} }, \ \u\in \H^{1,2}_{0} (\Omega),
	\end{align}
	for all $p\in[2,\infty)$.
	From the inequality \eqref{Gen_lady} and Poincar\'e's inequality \eqref{2.1}, it is clear that  $\V\subset\L^p$, for all $p\in[2,\infty)$.
	\item [(iv)] An application of Agmon's inequality yields 
	\begin{align}\label{24}
	\|\u\|_{\L^{\infty}}\leq C\|\u\|_{\L^2}^{1/2}\|\u\|_{\H^2}^{1/2},  \ \u\in\H^2(\Omega). 
	\end{align}
	\end{enumerate}
\end{remark}

\subsection{Trilinear form and bilinear operator}
We define the \emph{trilinear form} $b(\cdot,\cdot,\cdot):\V\times\V\times\V\to\R$ by $$b(\u,\v,\w)=\int_{\Omega}(\u(x)\cdot\nabla)\v(x)\cdot\w(x)\d x=\sum_{i,j=1}^2\int_{\Omega}\u_i(x)\frac{\partial \v_j(x)}{\partial x_i}\w_j(x)\d x.$$ If $\u, \v$ are such that the linear map $b(\u, \v, \cdot) $ is continuous on $\V$, the corresponding element of $\V'$ is denoted by $\B(\u, \v)$. We represent   $\B(\u) = \B(\u, \u)=\mathcal{P}(\u\cdot\nabla)\u$. Using an integration by parts, it is immediate that 
\begin{equation}\label{b0}
\left\{
\begin{aligned}
b(\u,\v,\v) &= 0,\ \text{ for all }\ \u,\v \in\V,\\
b(\u,\v,\w) &=  -b(\u,\w,\v),\ \text{ for all }\ \u,\v,\w\in \V.
\end{aligned}
\right.\end{equation}
It can be easily seen that $\B$ maps $\wi\L^4$ (and so $\V$) into $\V'$ and
\begin{align*}
\left|\left<\B(\u,\u),\v\right>\right|=\left|b(\u,\v,\u)\right|\leq \|\u\|_{\L^4}^2\|\nabla\v\|_{\H}\leq \sqrt{2}\|\u\|_{\H}\|\nabla\u\|_{\H}\|\v\|_{\V},
\end{align*}
for all $\v\in\V$, so that 
\begin{align}\label{2.9a}
\|\B(\u)\|_{\V'}\leq \sqrt{2}\|\u\|_{\H}\|\nabla\u\|_{\H}\leq \frac{\sqrt{2}}{\lambda_1^{1/4}}\|\u\|_{\V}^2,\ \text{ for all }\ \u\in\V,
\end{align}
using the Poincar\'e inequality. 

\subsection{Nonlinear operator}
Let us now consider the operator $\mathcal{C}(\u):=\mathcal{P}(|\u|^{r-1}\u)$. It is immediate that $\langle\mathcal{C}(\u),\u\rangle =\|\u\|_{\widetilde{\L}^{r+1}}^{r+1}$. Note that for $\u\in\wi\L^{r+1}(\mathcal{O})$, we have 
\begin{align*}
\|\mathcal{C}(\u)\|_{\wi\L^{\frac{r+1}{r}}}=\|\mathcal{P}(|\u|^{r-1}\u)\|_{\wi\L^{\frac{r+1}{r}}}\leq\|\mathcal{P}\|_{\mathcal{L}(\L^{\frac{r+1}{r}}(\mathcal{O});\wi\L^{\frac{r+1}{r}})}\|\u\|_{\L^{r+1}(\mathcal{O})}^{r}\leq C\|\u\|_{\wi\L^{r+1}}^{r},
\end{align*}
and hence the map $\mathcal{C}(\cdot):\wi{\L}^{r+1}(\mathcal{O})\to\widetilde{\L}^{\frac{r+1}{r}}$. Furthermore, for all $\u\in\wi\L^{r+1}$, the map is Gateaux differentiable with Gateaux derivative 
\begin{align}\label{29}
\mathcal{C}'(\u)\v&=\left\{\begin{array}{cc}\mathcal{P}(\v),&\text{ for }r=1,\\ \left\{\begin{array}{cc}\mathcal{P}(|\u|\v)+\mathcal{P}\left(\frac{\u}{|\u|}(\u\cdot\v)\right),&\text{ if }\u\neq \mathbf{0},\\\mathbf{0},&\text{ if }\u=\mathbf{0},\end{array}\right.&\text{ for } r=2,\\ \mathcal{P}(|\u|^{r-1}\v)+(r-1)\mathcal{P}(\u|\u|^{r-3}(\u\cdot\v)), &\text{ for }r\geq 3,\end{array}\right.
\end{align}
for $\v\in\widetilde{\L}^{r+1}$.  For $\u,\v\in\wi\L^{r+1}$, it can be easily seen that 
\begin{align}\label{2.9}
\langle\mathcal{C}'(\u)\v,\v\rangle=\int_{\mathcal{O}}|\u(x)|^{r-1}|\v(x)|^2\d x+(r-1)\int_{\mathcal{O}}|\u(x)|^{r-3}|\u(x)\cdot\v(x)|^2\d x\geq 0,
\end{align}
for $r\geq 1$ (note that for $r=2$ also the same result holds, since in that case, the second integral becomes $\int_{\{x\in\mathcal{O}:\u(x)\neq 0\}}\frac{1}{|\u(x)|}|\u(x)\cdot\v(x)|^2\d x\geq 0$). For $r\geq 3$, we further have 
\begin{align}\label{30}
\mathcal{C}''(\u)(\v\otimes\w)&=(r-1)\mathcal{P}\left\{|\u|^{r-3}\left[(\u\cdot\w)\v+(\u\cdot\v)\w+(\w\cdot\v)\u\right]\right\}\nonumber\\&\quad+(r-1)(r-3)\mathcal{P}\left[|\u|^{r-5}(\u\cdot\v)(\u\cdot\w)\u\right],
\end{align}
for all $\u,\v,\w\in\widetilde{\L}^{r+1}$ (for $r=4$, the final term appearing in the right hand side of the equality \eqref{30} is defined for $\u\neq\mathbf{0}$ and one has to take $\mathbf{0}$ for $\u=\mathbf{0}$). For  $\u,\v\in\widetilde{\L}^{r+1}$, using Taylor's formula (Theorem 7.9.1, \cite{PGC}), we have 
\begin{align}\label{213}
\langle \mathcal{P}(|\u|^{r-1}\u)-\mathcal{P}(|\v|^{r-1}\v),\w\rangle&\leq \|(|\u|^{r-1}\u)-(|\v|^{r-1}\v)\|_{\widetilde{\L}^{\frac{r+1}{r}}}\|\w\|_{\widetilde{\L}^{r+1}}\nonumber\\&\leq\sup_{0<\theta<1}r\|(\u-\v)|\theta\u+(1-\theta)\v|^{r-1}\|_{\widetilde{\L}^{{\frac{r+1}{r}}}}\|\w\|_{\widetilde{\L}^{r+1}}\nonumber\\&\leq r\sup_{0<\theta<1}\|\theta\u+(1-\theta)\v\|_{\widetilde{\L}^{r+1}}^{r-1}\|\u-\v\|_{\widetilde{\L}^{r+1}}\|\w\|_{\widetilde{\L}^{r+1}}\nonumber\\&\leq r\left(\|\u\|_{\widetilde{\L}^{r+1}}+\|\v\|_{\widetilde{\L}^{r+1}}\right)^{r-1}\|\u-\v\|_{\widetilde{\L}^{r+1}}\|\w\|_{\widetilde{\L}^{r+1}},
\end{align}
for all $\u,\v,\w\in\widetilde{\L}^{r+1}$. 
Thus the operator $\mathcal{C}(\cdot):\widetilde{\L}^{r+1}\to\widetilde{\L}^{\frac{r+1}{r}}$ is locally Lipschitz. Moreover, 	for any $r\in[1,\infty)$, we have (cf. \cite{MTM1})
\begin{align}\label{2.23}
&\langle\mathcal{P}(\u|\u|^{r-1})-\mathcal{P}(\v|\v|^{r-1}),\u-\v\rangle\geq \frac{1}{2}\||\u|^{\frac{r-1}{2}}(\u-\v)\|_{\H}^2+\frac{1}{2}\||\v|^{\frac{r-1}{2}}(\u-\v)\|_{\H}^2\geq 0,
\end{align}
for $r\geq 1$. 	It is important to note that (cf. \cite{MTM1})
\begin{align}\label{a215}
\|\u-\v\|_{\wi\L^{r+1}}^{r+1}\leq 2^{r-2}\||\u|^{\frac{r-1}{2}}(\u-\v)\|_{\H}^2+2^{r-2}\||\v|^{\frac{r-1}{2}}(\u-\v)\|_{\H}^2,
\end{align}
for $r\geq 1$ (replace $2^{r-2}$ with $1,$ for $1\leq r\leq 2$).

\begin{lemma}[Remark 2.4, \cite{MTM4}]\label{lem2.3}
For 	$r\in[1,3]$ and $\u,\v\in\V$, we obtain 
	\begin{align}\label{fe2}
	\langle(\G(\u)-\G(\v),\u-\v\rangle+ \frac{27}{32\mu ^3}N^4\|\u-\v\|_{\H}^2\geq 0,
	\end{align}
	for all $\v\in\widehat{\mathbb{B}}_N$, where $\widehat{\mathbb{B}}_N$ is an $\widetilde{\L}^4$-ball of radius $N$, that is,
	$
	\widehat{\mathbb{B}}_N:=\big\{\z\in\widetilde{\L}^4:\|\z\|_{\widetilde{\L}^4}\leq N\big\}.
	$ Thus, the operator $\G(\cdot)$ is locally monotone.
\end{lemma}

\begin{lemma}[Theorem 2.2, \cite{MTM1}]\label{lem2.4}
	Let $\u,\v\in\V$, for $r>3$. Then,	for the operator $\G(\u)=\mu \A\u+\B(\u)+\beta\mathcal{C}(\u)$, we  have 
	\begin{align}\label{fe}
	\langle(\G(\u)-\G(\v),\u-\v\rangle+\eta\|\u-\v\|_{\H}^2\geq 0,
	\end{align}
	where \begin{align}\label{215}\eta=\frac{r-3}{2\mu(r-1)}\left(\frac{2}{\beta\mu (r-1)}\right)^{\frac{2}{r-3}}.\end{align} That is, the operator $\G+\eta\mathrm{I}$ is a monotone operator from $\V$ to $\V'$. 
\end{lemma}

\subsection{Abstract formulation, weak and strong solutions}  Let us take  the projection $\mathcal{P}$ onto the system \eqref{1} to write down the abstract formulation of the system \eqref{1} as: 
\begin{equation}\label{kvf}
\left\{
\begin{aligned}
\frac{\d\u(t)}{\d t}+\mu\A\u(t)+\B(\u(t))+\alpha\u(t)+\beta\mathcal{C}(\u(t))&=\f(t), \ t\in (0,T),\\
\u(0)&=\u_0\in\H,
\end{aligned}
\right.
\end{equation}
where $\f\in\mathrm{L}^2(0,T;\V')$. For convenience, we used $\f$ instead of $\mathcal{P}\f$. 
Let us now provide the definition of \emph{weak solutions} of the system \eqref{kvf}. 
\begin{definition}\label{weakd}
	A function  $\u\in\mathrm{L}^{\infty}(0,T;\H)\cap\mathrm{L}^2(0,T;\V)\cap\mathrm{L}^{r+1}(0,T;\wi\L^{r+1})$ with $\partial_t\u\in\mathrm{L}^{\frac{r+1}{r}}(0,T;\mathbb{V}'),$  is called a \emph{weak solution} to the system (\ref{kvf}), if for $\f\in\mathrm{L}^2(0,T;\mathbb{V}')$, $\u_0\in\H$ and $\v\in\V$, $\u(\cdot)$ satisfies:
	\begin{equation}\label{3.13}
	\left\{
	\begin{aligned}
	\langle\partial_t\u(t)+\mu\mathbf{A}\u(t)+\mathbf{B}(\u(t)+\alpha\u(t)+\beta\mathcal{C}(\u(t)),\v\rangle&=\langle\f(t),\v\rangle,\\
	\lim_{t\downarrow 0}\int_{\Omega}\u(t)\v\d x&=\int_{\Omega}\u_0\v\d x,
	\end{aligned}
	\right.
	\end{equation}
	and the energy equality 
	\begin{align}\label{energy}
\frac{1}{2}	\frac{\d}{\d t}\|\u(t)\|^2_{\H}+\mu\|\u(t)\|^2_{\V}+\alpha\|\u(t)\|_{\H}^2+\beta\|\u(t)\|_{\wi\L^{r+1}}^{r+1}=\langle \f(t),\u(t)\rangle,
	\end{align}
	for a.e. $t\in(0,T)$.
\end{definition}

	\begin{remark}\label{rem5.10}

	1. For Poincar\'e domains, the embedding of $\H\subset\V'$ is continuous and it can be easily seen that $\u\in \mathrm{L}^{\infty}(0,T;\H)$ implies $\u\in \mathrm{L}^{\infty}(0,T;\V')$. Thus, it is immediate that $\u,\frac{\d\u}{\d t}\in \mathrm{L}^{\frac{r+1}{r}}(0,T;\V')$ and then using Theorem 2, section 5.9.2 \cite{LCE}, it can be easily seen that $\u\in\C([0,T];\V')$. The reflexivity of the space $\H$ and  Proposition 1.7.1 \cite{PCAM} gives $\u\in\C_w([0,T];\H)$ and the map $t\mapsto\|\u(t)\|_{\H}$ is bounded, where $\C_w([0,T];\H)$ represents the space of functions $\u:[0,T]\to\H$ which are weakly continuous. Thus the second condition  in the Definition \ref{weakd} makes sense. 
	
	2.  If the existence of a weak solution to the problem \eqref{weakd} is known, then following similar arguments as in the proof of Theorem 3.1, \cite{GPG},  one can establish that the solution verifies:
	\begin{enumerate}
		\item [(i)] the energy inequality: 
		\begin{align*}
		&\|\u(t)\|_{\H}^2+2\mu\int_0^t\|\u(s)\|_{\V}^2\d s+2\alpha\int_0^t\|\u(s)\|_{\V}^2\d s+2\beta\int_0^t\|\u(s)\|_{\wi\L^{r+1}}^{r+1}\d s\\&\leq \|\u_0\|_{\H}^2+2\int_0^t\langle\f,\u(s)\rangle\d s,
		\end{align*}
		for all $\ t\in[0,T].$
		\item [(ii)] $\lim\limits_{t\to 0}\|\u(t)-\u_0\|_{\H}=0.$
	\end{enumerate}

	3. Moreover, if $\u(\cdot)$ is a unique weak solution of \eqref{weakd} and if $\u\in\mathrm{L}^4(0,T;\wi\L^4)$, then following similar arguments as in Theorem 4.1, \cite{GPG}, we infer that $\u(\cdot)$ satisfies the energy equality: 
	\begin{align}\label{521}
	&\|\u(t)\|_{\H}^2+2\mu\int_0^t\|\u(s)\|_{\V}^2\d s+2\alpha\int_0^t\|\u(s)\|_{\H}^2\d s+2\beta\int_0^t\|\u(s)\|_{\wi\L^{r+1}}^{r+1}\d s\nonumber\\& = \|\u_0\|_{\H}^2+2\int_0^t\langle\f,\u(s)\rangle\d s,
	\end{align}
	for all $\ t\in[0,T],$
	For $r\geq 1$, $\u\in\mathrm{L}^4(0,T;\wi\L^4)$ can be established by using Ladyzhenskaya's inequality as
	\begin{align*}
	\int_0^T\|\u(t)\|_{\wi\L^4}^4\d t\leq 2\sup_{t\in[0,T]}\|\u(t)\|_{\H}^2\int_0^T\|\u(t)\|_{\V}^2\d t<\infty,
	\end{align*}
	since $\u\in \mathrm{L}^{\infty}(0,T;\H)\cap\mathrm{L}^2(0, T;\V)$.
For $r>3$, we use the fact that $\u\in\mathrm{L}^{r+1}(0,T;\wi\L^{r+1})$ and the interpolation inequality to obtain that $\u\in\mathrm{L}^4(0,T;\wi\L^4)$. Using \eqref{211}, we find 
	\begin{align*}
	\|\u\|_{\wi\L^4}\leq\|\u\|_{\H}^{\frac{r-3}{2(r-1)}}\|\u\|_{\wi\L^{r+1}}^{\frac{r+1}{2(r-1)}}
	\end{align*}
	and 
	\begin{align*}
	\int_0^T\|\u(t)\|_{\wi\L^4}^4\d t\leq T^{\frac{r-3}{r-1}}\sup_{t\in[0,T]}\|\u(t)\|_{\H}^{\frac{2(r-3)}{r-1}}\left(\int_0^T\|\u(t)\|_{\wi\L^{r+1}}^{r+1}\d t\right)^{\frac{2}{r-1}}<\infty. 
	\end{align*}

4.	Remember that every weak solution of \eqref{weakd} is $\H$ weakly continuous in time, all weak solutions satisfying the energy equality \eqref{521} and so, all weak solutions of \eqref{weakd} belong to $\C([0, T];\H)$ (see \cite{GPG,KWH} also).
\end{remark}

Let us now state the existence and uniqueness theorem for the system \eqref{kvf}. A proof of the following theorem can be obtained from Theorem 3.4, \cite{MTM1}. A  monotonicity property (local for $r\in[1,3]$) of the linear and nolinear operators (Lemmas \ref{lem2.3} and \ref{lem2.4}) and a generalization of the Minty-Browder technique are exploited in the proof of Theorem 3.4, \cite{MTM1}. 

\begin{theorem}\label{weake} For $\u_0\in\H$ and $
	\f\in\mathrm{L}^2(0,T;\V')$, there exists a \emph{unique weak solution} $\u(\cdot)$  to the system \eqref{kvf} in the sense of Definition \ref{weakd} satisfying the energy equality \eqref{521}. 
\end{theorem}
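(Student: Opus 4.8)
The plan is to prove existence by a Galerkin approximation in which the nonlinear terms are passed to the limit using the local monotonicity of the operators together with a generalized Minty--Browder argument, and to prove uniqueness by a direct energy estimate on the difference of two solutions. First I would set up the Galerkin scheme. Let $\{e_k\}$ be a basis of $\V$ (the eigenfunctions of $\A$ in the bounded case, and any countable basis of $\V$ lying in $\D(\A)$ in the Poincar\'e/unbounded case), put $\H_n=\mathrm{span}\{e_1,\dots,e_n\}$ with the orthogonal projection $\mathrm{P}_n$, and look for $\u_n(t)=\sum_{k=1}^n c_k^n(t)e_k$ solving the projected equations
\begin{align*}
\frac{\d}{\d t}(\u_n,e_k)+\mu\langle\A\u_n,e_k\rangle+\langle\B(\u_n),e_k\rangle+\alpha(\u_n,e_k)+\beta\langle\mathcal{C}(\u_n),e_k\rangle=\langle\f,e_k\rangle,
\end{align*}
with $\u_n(0)=\mathrm{P}_n\u_0$. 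Since $\B$ is quadratic and $\mathcal{C}$ is a smooth polynomial nonlinearity on the finite-dimensional space $\H_n$, the right-hand side is locally Lipschitz in the coefficients, so Carath\'eodory's theorem yields a local-in-time solution, which the a priori bound below extends to all of $[0,T]$.

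Next I would derive the uniform estimates. Testing with $\u_n$ and using $b(\u_n,\u_n,\u_n)=0$ from \eqref{b0} and $\langle\mathcal{C}(\u_n),\u_n\rangle=\|\u_n\|_{\wi\L^{r+1}}^{r+1}$ gives the energy identity for $\u_n$; bounding $\langle\f,\u_n\rangle\leq\frac{1}{2\mu}\|\f\|_{\V'}^2+\frac{\mu}{2}\|\u_n\|_{\V}^2$ and absorbing the viscous term yields uniform bounds for $\u_n$ in $\mathrm{L}^{\infty}(0,T;\H)\cap\mathrm{L}^2(0,T;\V)\cap\mathrm{L}^{r+1}(0,T;\wi\L^{r+1})$. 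Combined with \eqref{2.9a} and the boundedness of $\mathcal{C}(\cdot):\wi\L^{r+1}\to\wi\L^{(r+1)/r}$, these also bound $\partial_t\u_n$ in $\mathrm{L}^{(r+1)/r}(0,T;\V')$.

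The heart of the matter is the passage to the limit. Along a subsequence, $\u_n\rightharpoonup\u$ in $\mathrm{L}^2(0,T;\V)$, $\u_n\overset{*}{\rightharpoonup}\u$ in $\mathrm{L}^{\infty}(0,T;\H)$, $\partial_t\u_n\rightharpoonup\partial_t\u$ in $\mathrm{L}^{(r+1)/r}(0,T;\V')$, and $\B(\u_n)\rightharpoonup\B_0$, $\mathcal{C}(\u_n)\rightharpoonup\mathcal{C}_0$ weakly in their dual spaces. In a bounded domain the compact embedding $\V\hookrightarrow\H$ and the Aubin--Lions lemma give $\u_n\to\u$ strongly in $\mathrm{L}^2(0,T;\H)$, which immediately identifies $\B_0=\B(\u)$ and $\mathcal{C}_0=\mathcal{C}(\u)$. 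The main obstacle is the Poincar\'e/unbounded case, where $\V\hookrightarrow\H$ is not compact and this argument fails. Here I would use the local monotonicity of the combined operator $\mathscr{G}(\u):=\mu\A\u+\B(\u)+\alpha\u+\beta\mathcal{C}(\u)$: combining $|b(\w,\u_1,\w)|\leq\sqrt{2}\|\w\|_{\H}\|\w\|_{\V}\|\u_1\|_{\V}$ with the monotonicity \eqref{2.8} of $\mathcal{C}$ gives
\begin{align*}
\langle\mathscr{G}(\u_1)-\mathscr{G}(\u_2),\u_1-\u_2\rangle\geq-C\left(\|\u_1\|_{\V}^2+\|\u_2\|_{\V}^2\right)\|\u_1-\u_2\|_{\H}^2.
\end{align*}
Following the generalized Minty--Browder technique of \cite{MTM1}, I would weight the energy identity by a suitable exponential factor to compensate the $-C\|\cdot\|_{\H}^2$ term, use the Galerkin energy identity together with the weak lower semicontinuity of the $\H$-norm to control $\limsup_n\int_0^T\langle\mathscr{G}(\u_n),\u_n\rangle\d t$, and thereby derive the variational inequality $\langle\partial_t\u+\mathscr{G}(\v)-\f,\u-\v\rangle\geq0$ for all admissible test functions $\v$; substituting $\v=\u-\lambda\w$ and letting $\lambda\downarrow0$ then identifies $\mathscr{G}(\u)$ with the weak limit, so $\B_0=\B(\u)$, $\mathcal{C}_0=\mathcal{C}(\u)$, and $\u$ solves \eqref{kvf}.

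Finally I would establish the energy equality and uniqueness. Because $\Omega\subset\R^2$, Ladyzhenskaya's inequality gives $\u\in\mathrm{L}^4(0,T;\wi\L^4)$ as in Remark \ref{rem5.10}, which provides enough integrability to justify the energy equality \eqref{521} by a time-mollification argument. For uniqueness, let $\u_1,\u_2$ be two weak solutions with the same data and set $\w=\u_1-\u_2$; since one of the two bilinear contributions vanishes by \eqref{b0}, we have $\langle\B(\u_1)-\B(\u_2),\w\rangle=b(\w,\u_1,\w)$, and using $|b(\w,\u_1,\w)|\leq\frac{\mu}{2}\|\w\|_{\V}^2+\frac{1}{\mu}\|\u_1\|_{\V}^2\|\w\|_{\H}^2$ together with the monotonicity \eqref{2.8} to drop the $\mathcal{C}$ term leads to $\frac{\d}{\d t}\|\w\|_{\H}^2\leq\frac{2}{\mu}\|\u_1\|_{\V}^2\|\w\|_{\H}^2$; as $\|\u_1\|_{\V}^2\in\mathrm{L}^1(0,T)$ and $\w(0)=\mathbf 0$, Gronwall's inequality forces $\w\equiv\mathbf 0$.
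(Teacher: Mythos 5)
Your proposal is correct and follows essentially the same route the paper relies on: the paper gives no proof of its own but defers to Theorem 3.4 of \cite{MTM1}, explicitly naming the local monotonicity of the operators and a generalized Minty--Browder technique, which is precisely the Galerkin-plus-local-monotonicity scheme you outline (and your treatment of the energy equality via $\u\in\mathrm{L}^4(0,T;\wi\L^4)$ and of uniqueness via \eqref{b0}, \eqref{2.8} and Gronwall matches Remark \ref{rem5.10} and the computation in Lemma \ref{lem3.1}).
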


\section{Global Attractor (Bounded Domains)}\label{sec3}\setcounter{equation}{0} 
In this section, we discuss the existence of a global attractor for the CBF equations in two dimensional bounded domains. We assume that $\f\in\V'$ is independent of $t$ in \eqref{kvf}. 

\begin{lemma}
	Let $\u(\cdot)$ be the unique weak solution of the system \eqref{kvf}.  Then, we have 
	\begin{align}\label{3p9}
	\frac{1}{t}\int_0^t\left[\mu\|\u(s)\|_{\V}^2+\alpha\|\u(s)\|_{\H}^2+\beta\|\u(s)\|_{\wi\L^{r+1}}^{r+1}\right]\d s\leq  \frac{\|\u_0\|_{\H}^2}{t}+\frac{1}{\mu}\|\f\|_{\V'}^2.
	\end{align}
\end{lemma}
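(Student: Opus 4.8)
The plan is to start from the energy equality \eqref{521}, which holds for the unique weak solution by Theorem \ref{weake} (and Remark \ref{rem5.10}), and to estimate the forcing term by the duality pairing followed by a calibrated Young's inequality that leaves exactly enough viscous dissipation on the left-hand side. Since $\f$ is independent of $t$ and lies in $\V'$, I would bound the forcing term pointwise in $s$ by
\[
2\langle\f,\u(s)\rangle\leq 2\|\f\|_{\V'}\|\u(s)\|_{\V}\leq \mu\|\u(s)\|_{\V}^2+\frac{1}{\mu}\|\f\|_{\V'}^2,
\]
where the split is chosen precisely so that the coefficient of $\|\u(s)\|_{\V}^2$ equals $\mu$. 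Integrating over $[0,t]$ gives
\[
2\int_0^t\langle\f,\u(s)\rangle\d s\leq \mu\int_0^t\|\u(s)\|_{\V}^2\d s+\frac{t}{\mu}\|\f\|_{\V'}^2.
\]

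Next I would substitute this bound into the right-hand side of \eqref{521}. The term $\mu\int_0^t\|\u(s)\|_{\V}^2\d s$ is then absorbed into the $2\mu\int_0^t\|\u(s)\|_{\V}^2\d s$ on the left, leaving a net coefficient of $\mu$ for the viscous term. Discarding the nonnegative quantity $\|\u(t)\|_{\H}^2$ on the left yields
\[
\mu\int_0^t\|\u(s)\|_{\V}^2\d s+2\alpha\int_0^t\|\u(s)\|_{\H}^2\d s+2\beta\int_0^t\|\u(s)\|_{\wi\L^{r+1}}^{r+1}\d s\leq\|\u_0\|_{\H}^2+\frac{t}{\mu}\|\f\|_{\V'}^2.
\]

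Finally, since $\alpha,\beta>0$ and all three integrands are nonnegative, I can weaken the coefficients $2\alpha$ and $2\beta$ on the left to the smaller $\alpha$ and $\beta$ without violating the inequality (the surplus dissipation is simply dropped), and then divide throughout by $t>0$ to obtain exactly \eqref{3p9}. There is no substantive obstacle in this argument: the only points requiring a little care are the calibration of the Young's inequality constant so that precisely $\mu$ (rather than a smaller amount) of viscous dissipation survives, and the observation that one may freely discard part of the Darcy and Forchheimer dissipation to match the stated single coefficients $\alpha$ and $\beta$.
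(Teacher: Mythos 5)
Your proposal is correct and follows essentially the same route as the paper: the paper derives the intermediate estimate \eqref{3p8} by testing with $\u$ and applying exactly this calibrated Young's inequality to the forcing term, then obtains \eqref{3p9} by dropping $\|\u(t)\|_{\H}^2$, weakening the coefficients $2\alpha,2\beta$ to $\alpha,\beta$, and dividing by $t$. Starting from the integrated energy equality \eqref{521} rather than the differential identity \eqref{3p5} is an immaterial difference.
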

\begin{proof}
Let us take the inner product with $\u(\cdot)$ to the first equation in \eqref{kvf}  to get 
\begin{align}\label{3p5}
&\frac{1}{2}\frac{\d}{\d t}\|\u(t)\|_{\H}^2+\mu\|\nabla\u(t)\|_{\H}^2+\alpha\|\u(t)\|_{\H}^2+\beta\|\u(t)\|_{\wi\L^{r+1}}^{r+1} =\langle \f,\u(t)\rangle,
\end{align}
where we used the fact that $\langle \B(\u),\u\rangle =0$. 
Using the Cauchy-Schwarz inequality and Young's inequality, we estimate  $|\langle \f,\u\rangle|$ as 
\begin{align}
|\langle\f,\u\rangle|&\leq \|\f\|_{\V'}\|\u\|_{\V}\leq \frac{\mu}{2}\|\u\|_{\V}^2+\frac{1}{2\mu}\|\f\|_{\V'}^2. \label{3p6}
\end{align}
Applying \eqref{3p6}  in \eqref{3p5} and then  integrating from $0$ to $t$, we obtain 
\begin{align}\label{3p8}
\|\u(t)\|_{\H}^2+\mu\int_0^t\|\u(s)\|_{\V}^2\d s+2\alpha\int_0^t\|\u(s)\|_{\H}^2\d s+2\beta\int_0^t\|\u(s)\|_{\wi\L^{r+1}}^{r+1}\d s\leq \|\u_0\|_{\H}^2+\frac{t}{\mu}\|\f\|_{\V'}^2. 
\end{align}
From \eqref{3p8}, we easily obtain \eqref{3p9}. 
\end{proof}

Note that whenever a Cauchy problem on a Banach space $\X$ is well posed in the sense of Hadamard (for proper notion of weak solution) for all initial data $\x(0)=\x_0\in\X$, then the corresponding (forward) solutions $\x(t)$ can be written in the form $\x(t) = \S(t)\x_0$, where the semigroup $\S(t)$ is uniquely determined by the dynamical system. Let $\u(t)$, $t\geq 0$ be the unique weak solution of the system \eqref{kvf} with $\u_0\in\H$. Thanks to the existence and uniqueness of weak solution for the system \eqref{kvf} (Theorem \ref{weake}), we can define a continuous semigroup $\{\S(t)\}_{t\geq 0}$ in $\H$ by 
\begin{align}
\S(t)\u_0= \u(t), \ t\geq 0,
\end{align}
where $\u(\cdot)$ is the unique weak solution of \eqref{kvf} with $\u(0)=\u_0\in\H$. 

  Next, we prove the existence of a global attractor for the semigroup $\S(t),  t \geq 0$ defined on $\H$ for the 2D CBF  \eqref{kvf} in bounded domains.  Our first aim is to establish the existence of an absorbing ball in $\H$ for $\S(t),  t \geq 0$. We use the compact embedding of $\V$ in $\H$  to get the  existence of a global attractor for $\S(t),  t \geq 0$ in $\H$. In the next result, we show that the operator $\S(t) : \H\to\H$, for $t\geq 0$, is Lipschitz continuous on bounded subsets of $\H$. 

  \begin{lemma}\label{lem3.1} The map
	$\S(t) : \H\to\H$, for $t\geq 0$, is Lipschitz continuous on bounded subsets of $\H$. 
\end{lemma}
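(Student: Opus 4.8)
The plan is to prove the Lipschitz bound by a standard energy estimate for the difference of two solutions, using the monotonicity of the Forchheimer term to discard it and the antisymmetry of the trilinear form to control the convective term. First I would fix $R>0$ and take two initial data $\u_0^1,\u_0^2\in\H$ with $\|\u_0^1\|_{\H},\|\u_0^2\|_{\H}\leq R$, denoting by $\u_1(\cdot),\u_2(\cdot)$ the corresponding unique weak solutions from Theorem \ref{weake}. Writing $\w:=\u_1-\u_2$, the difference satisfies
$$\frac{\d\w}{\d t}+\mu\A\w+\B(\u_1)-\B(\u_2)+\alpha\w+\beta(\mathcal{C}(\u_1)-\mathcal{C}(\u_2))=\mathbf{0},$$
and taking the duality pairing with $\w$ yields
$$\frac12\frac{\d}{\d t}\|\w\|_{\H}^2+\mu\|\w\|_{\V}^2+\alpha\|\w\|_{\H}^2+\beta\langle\mathcal{C}(\u_1)-\mathcal{C}(\u_2),\w\rangle=-\langle\B(\u_1)-\B(\u_2),\w\rangle.$$

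Next I would dispose of the two nonlinear contributions. The Forchheimer term is handled immediately by the monotonicity estimate \eqref{2.8}, which gives $\langle\mathcal{C}(\u_1)-\mathcal{C}(\u_2),\w\rangle\geq0$, so this term may be dropped to the favourable side. For the convective term I would use bilinearity to write $\B(\u_1)-\B(\u_2)=\B(\u_1,\w)+\B(\w,\u_2)$; then the identity $b(\u,\v,\v)=0$ from \eqref{b0} eliminates $\langle\B(\u_1,\w),\w\rangle$, leaving only $\langle\B(\w,\u_2),\w\rangle=b(\w,\u_2,\w)$. Estimating this by H\"older's and Ladyzhenskaya's inequalities gives $|b(\w,\u_2,\w)|\leq\sqrt2\,\|\w\|_{\H}\|\w\|_{\V}\|\u_2\|_{\V}$, and Young's inequality absorbs one factor $\|\w\|_{\V}$ into $\frac{\mu}{2}\|\w\|_{\V}^2$, producing the residual term $\frac{1}{\mu}\|\u_2\|_{\V}^2\|\w\|_{\H}^2$.

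Collecting these estimates and discarding the nonnegative terms on the left, I obtain the differential inequality
$$\frac{\d}{\d t}\|\w\|_{\H}^2\leq\frac{2}{\mu}\|\u_2\|_{\V}^2\|\w\|_{\H}^2,$$
to which I would apply Gronwall's lemma to conclude
$$\|\w(t)\|_{\H}^2\leq\|\w(0)\|_{\H}^2\exp\left(\frac{2}{\mu}\int_0^t\|\u_2(s)\|_{\V}^2\d s\right).$$
Finally, the energy estimate \eqref{3p8} applied to $\u_2$ bounds $\int_0^t\|\u_2(s)\|_{\V}^2\d s$ by $\frac{1}{\mu}\|\u_0^2\|_{\H}^2+\frac{t}{\mu^2}\|\f\|_{\V'}^2\leq\frac{R^2}{\mu}+\frac{t}{\mu^2}\|\f\|_{\V'}^2$, so that the exponential factor is a finite constant depending only on $t$, $R$, $\mu$ and $\|\f\|_{\V'}$; this is exactly the asserted Lipschitz estimate on bounded subsets of $\H$. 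The only delicate point is controlling the convective term, and the crucial structural input is that the Forchheimer nonlinearity contributes with a favourable sign through \eqref{2.8}, so that no regularity of $\u_1,\u_2$ beyond the weak-solution class is required.
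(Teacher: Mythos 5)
Your proposal is correct and follows essentially the same route as the paper's proof: energy estimate for the difference, monotonicity \eqref{2.8} to discard the Forchheimer term, the cancellation $b(\cdot,\w,\w)=0$ plus Ladyzhenskaya to reduce the convective term to $\frac{1}{\mu}\|\cdot\|_{\V}^2\|\w\|_{\H}^2$, then Gronwall and the energy bound \eqref{3p8}. The only (immaterial) difference is that your decomposition leaves $b(\w,\u_2,\w)$ while the paper's leaves $b(\w,\u,\w)$.
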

\begin{proof}
	Let us take $\S(t)\u_0=\u(t)$ and $\S(t)\v_0=\v(t)$, for all $t\geq 0$. Then $\w(t):=\u(t)-\v(t)$ satisfies: 
	\begin{equation}\label{38}
	\left\{
	\begin{aligned}
	\frac{\d\w(t)}{\d t}+\mu\A\w(t)+\B(\w(t),\u(t))+\B(\v(t),\w(t))&+\alpha\w(t)\\+\beta[\mathcal{C}(\u(t))-\mathcal{C}(\v(t))]&=\mathbf{0},\\ 
	\w(0)&=\w_0\in\H,
	\end{aligned}
	\right.
	\end{equation}
in $\V'$ for a.e.  $ t\in (0,T)$. Let us  take the inner product with $\w(\cdot)$ to the first equation in \eqref{38} to find 
	\begin{align}\label{309}
	&\frac{1}{2}\frac{\d}{\d t}\|\w(t)\|_{\H}^2+\mu\|\nabla\w(t)\|_{\H}^2+\alpha\|\w(t)\|_{\H}^2+\beta\langle\mathcal{C}(\u(t))-\mathcal{C}(\v(t)),\w(t)\rangle=-b(\w(t),\u(t),\w(t)),
	\end{align}
	where we used the fact that $b(\v,\w,\w)=0$. Integrating from $0$ to $t$, we obtain 
	\begin{align}\label{39a}
	&\|\w(t)\|_{\H}^2+2\mu\int_0^t\|\w(s)\|_{\V}^2\d s+2\alpha\int_0^t\|\w(s)\|_{\H}^2\d s+2\beta\int_0^t\langle\mathcal{C}(\u(s))-\mathcal{C}(\v(s)),\w(s)\rangle\d s\nonumber\\&=\|\w_0\|_{\H}^2-2\int_0^tb(\w(s),\u(s),\w(s))\d s.
	\end{align}
	Using H\"older's and Ladyzhenskaya's inequalities, we estimate $|b(\w,\u,\w)|$ as 
	\begin{align}\label{3.10}
	|b(\w,\u,\w)|\leq \|\w\|_{\L^4}^2\|\nabla\u\|_{\H}\leq \sqrt{2}\|\w\|_{\H}\|\nabla\w\|_{\H}\|\nabla\u\|_{\H}\leq \frac{\mu}{2}\|\w\|_{\V}^2+\frac{1}{\mu}\|\u\|_{\V}^2\|\w\|_{\H}^2. 
	\end{align}
	Let us use  \eqref{3.10} and \eqref{2.23} in \eqref{39a} to deduce that 
	\begin{align}\label{311}
	&\|\w(t)\|_{\H}^2+\mu\int_0^t\|\w(s)\|_{\V}^2\d s\leq\|\w_0\|_{\H}^2+\frac{2}{\mu}\int_0^t\|\u(s)\|_{\V}^2\|\w(s)\|_{\H}^2\d s.
	\end{align}
	Applying  Gronwall's inequality in \eqref{311}, we obtain 
	\begin{align}
	\|\w(t)\|_{\H}^2\leq \|\w_0\|_{\H}^2\exp\left(\frac{2}{\mu}\int_0^t\|\u(s)\|_{\V}^2\d s\right),
	\end{align}
	and 
	\begin{align}\label{3p12}
	\|\w(t)\|_{\H}^2+\mu\int_0^t\|\w(s)\|_{\V}^2\d s\leq \|\w_0\|_{\H}^2 \exp\left(\frac{4}{\mu}\int_0^t\|\u(s)\|_{\V}^2\d s\right),
	\end{align}
for all $t\in[0,T]$. 	Thus, it is immediate that
	\begin{align}\label{313a}
	\|\S(t)\u_0-\S(t)\v_0\|_{\H}&\leq \|\u_0-\v_0\|_{\H}\exp\left(\frac{1}{\mu}\int_0^t\|\u(s)\|_{\V}^2\d s\right)\nonumber\\&\leq \|\u_0-\v_0\|_{\H}\exp\left\{\frac{1}{\mu^2}\left(\|\u_0\|_{\H}^2+\frac{t}{\mu}\|\f\|_{\V'}^2\right)\right\},
	\end{align}
	where we used \eqref{3p8}. Thus, the map
	$\S(t) : \H\to\H$, for $t\geq 0$, is Lipschitz continuous on bounded subsets of $\H$. 	
\end{proof}

\subsection{Absorbing ball in $\H$}\label{sec3.1} In this subsection, we show that the semigroup $\S(t)$	has a bounded absorbing set  in $\H$.

\begin{proposition} \label{prop32}
The set 
	\begin{align}\label{314b}
\mathcal{B}_1:=\left\{\v\in\H:\|\v\|_{\H}\leq M_1\equiv \frac{1}{\mu}\sqrt{\frac{2}{\lambda_1}}\|\f\|_{\V'}\right\},
	\end{align}
	is a bounded absorbing set  in $\H$ for the semigroup $\S(t)$. That is, given a bounded set $\B\subset\H$, there exists an entering time $t_\B>0$ such that $\S(t)\B\subset\mathcal{B}_1$, for all $t\geq t_\B$. 
\end{proposition}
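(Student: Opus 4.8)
The plan is to reduce the statement to a single scalar differential inequality for $\|\u(t)\|_{\H}^2$ and then to apply the classical (not uniform) Gronwall lemma. First I would take the inner product of the first equation in \eqref{kvf} with $\u(\cdot)$, or equivalently invoke the energy equality \eqref{energy}, and use $\langle\B(\u),\u\rangle=0$ together with $\langle\mathcal{C}(\u),\u\rangle=\|\u\|_{\wi\L^{r+1}}^{r+1}$ to obtain
\begin{align*}
\frac{1}{2}\frac{\d}{\d t}\|\u(t)\|_{\H}^2+\mu\|\u(t)\|_{\V}^2+\alpha\|\u(t)\|_{\H}^2+\beta\|\u(t)\|_{\wi\L^{r+1}}^{r+1}=\langle\f,\u(t)\rangle.
\end{align*}
I would bound the right-hand side exactly as in \eqref{3p6} by $\frac{\mu}{2}\|\u\|_{\V}^2+\frac{1}{2\mu}\|\f\|_{\V'}^2$, discard the nonnegative Darcy and Forchheimer contributions $\alpha\|\u\|_{\H}^2$ and $\beta\|\u\|_{\wi\L^{r+1}}^{r+1}$, and then invoke the Poincar\'e inequality \eqref{2.1} in the form $\|\u\|_{\V}^2\geq\uplambda_1\|\u\|_{\H}^2$ to arrive at
\begin{align*}
\frac{\d}{\d t}\|\u(t)\|_{\H}^2+\mu\uplambda_1\|\u(t)\|_{\H}^2\leq\frac{1}{\mu}\|\f\|_{\V'}^2.
\end{align*}

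The classical Gronwall lemma applied to this inequality yields
\begin{align*}
\|\u(t)\|_{\H}^2\leq\|\u_0\|_{\H}^2 e^{-\mu\uplambda_1 t}+\frac{1}{\mu^2\uplambda_1}\|\f\|_{\V'}^2,\quad t\geq 0.
\end{align*}
The point to watch is the factor of $2$ built into $M_1$: since $M_1^2=\frac{2}{\mu^2\uplambda_1}\|\f\|_{\V'}^2$, the stationary part $\frac{1}{\mu^2\uplambda_1}\|\f\|_{\V'}^2$ equals exactly $\tfrac12 M_1^2$, leaving room to absorb the transient term. Given a bounded set $\B\subset\H$ with $R:=\sup_{\u_0\in\B}\|\u_0\|_{\H}$, I would then require $\|\u_0\|_{\H}^2 e^{-\mu\uplambda_1 t}\leq\frac{1}{\mu^2\uplambda_1}\|\f\|_{\V'}^2$, which holds for all
\begin{align*}
t\geq t_{\B}:=\frac{1}{\mu\uplambda_1}\log^{+}\!\left(\frac{\mu^2\uplambda_1 R^2}{\|\f\|_{\V'}^2}\right),
\end{align*}
where $\log^{+}$ denotes the positive part (so $t_{\B}=0$ when $\mu^2\uplambda_1 R^2\leq\|\f\|_{\V'}^2$). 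For such $t$ the two contributions are each at most $\tfrac12 M_1^2$, giving $\|\S(t)\u_0\|_{\H}^2\leq M_1^2$, i.e. $\S(t)\B\subset\mathcal{B}_1$.

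There is no serious obstacle here: the argument is the standard dissipativity estimate, and the only genuinely domain-dependent ingredient is the Poincar\'e inequality \eqref{2.1}, which is available precisely because $\Omega$ is a Poincar\'e (or bounded) domain. I would emphasize in the write-up that the Darcy term $\alpha\u$ and the Forchheimer term $\beta|\u|^{r-1}\u$ are simply thrown away as favorable dissipation, so that the absorbing radius $M_1$ depends only on $\mu$, $\uplambda_1$, and $\|\f\|_{\V'}$; this is exactly the mechanism by which, as noted in the introduction, the presence of the Darcy term lets the same bound persist in general unbounded domains.
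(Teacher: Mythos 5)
Your proposal is correct and follows essentially the same route as the paper: the energy equality \eqref{3p5}, the bound \eqref{3p6} on $\langle\f,\u\rangle$, discarding the Darcy and Forchheimer dissipation, the Poincar\'e inequality, and Gronwall's lemma leading to \eqref{54}, from which absorption into $\mathcal{B}_1$ follows since the stationary term is exactly $\tfrac12 M_1^2$. Your explicit formula for the entering time $t_{\B}$ is a harmless addition that the paper leaves implicit.
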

\begin{proof}
From \eqref{3p5}, we have 
\begin{align}\label{5p3}
&\frac{\d}{\d t}\|\u(t)\|_{\H}^2+\mu\lambda_1\|\u(t)\|_{\H}^2+2\alpha\|\u(t)\|_{\H}^2+2\beta\|\u(t)\|_{\wi\L^{r+1}}^{r+1} \leq \frac{1}{\mu}\|\f\|_{\V'}^2,
\end{align}
and an application of Gronwall's inequality yields 
\begin{align}\label{54}
\|\u(t)\|_{\H}^2\leq\|\u_0\|_{\H}^2e^{-\mu\lambda_1 t}+\frac{1}{\mu^2\lambda_1}\|\f\|_{\V'}^2, 
\end{align}
for all $t\geq 0$. Moreover, we have 
\begin{align}\label{est}
\limsup_{t\to\infty}\|\u(t)\|_{\H}^2\leq \frac{1}{\mu^2\lambda_1}\|\f\|_{\V'}^2. 
\end{align}
That is, we obtain
\begin{align}\label{est1}
\limsup_{t\to\infty}\|\u(t)\|_{\H}\leq \frac{1}{\sqrt{\mu^2\lambda_1}}\|\f\|_{\V'}. 
\end{align}
Furthermore,  it follows from \eqref{54} and \eqref{est1} that the set \eqref{314b} is absorbing in $\H$ for the semigroup $\S(t)$. Hence, the following uniform estimate is valid:
\begin{align}\label{3.8}
\|\u(t)\|_{\H}\leq M_1, \ \text{ where }\ M_1=\sqrt{\frac{2}{\mu^2\lambda_1}}\|\f\|_{\V'},
\end{align}
for $t$ large enough ($t\gg 1$ or $t\geq t_{\B}$) depending on the initial data. From \eqref{3p5}, we infer that 
\begin{align}\label{3p29}
&\mu\int_{t}^{t+\theta}\|\u(s)\|_{\V}^2\d s+2\beta\int_t^{t+\theta}\|\u(s)\|_{\wi\L^{r+1}}^{r+1}\d s\leq  \|\u(t)\|_{\H}^2+\frac{\theta}{\mu}\|\f\|_{\V'}^2,
\end{align}
for all $\theta>0$. Using \eqref{est} in \eqref{3p29}, we find 
\begin{align}\label{3p30}
&\limsup_{t\to\infty}\left[\mu\int_{t}^{t+\theta}\|\u(s)\|_{\V}^2\d s+\beta\int_t^{t+\theta}\|\u(s)\|_{\wi\L^{r+1}}^{r+1}\d s\right] \leq  \frac{1}{\mu}\left(\frac{1}{\mu\lambda_1}+\theta\right)\|\f\|_{\V'}^2.
\end{align}
For  $\u_0\in\mathcal{B}_1$, we further have 
	\begin{align}\label{3p32}
\sup_{t\geq 0}\sup_{\u_0\in\mathcal{B}_1}\|\u(t)\|_{\H}^2 \leq \frac{3}{\mu^2\lambda_1}\|\f\|_{\V'}^2=:\widetilde{M}_1^2. 
\end{align}
and 
\begin{align}\label{3.30}
\int_{t}^{t+\theta}\left[\mu\|\u(s)\|_{\V}^2+\beta\|\u(s)\|_{\wi\L^{r+1}}^{r+1}\right]\d s\leq  \frac{1}{\mu}\left(\frac{3}{\mu\lambda_1}+\theta\right)\|\f\|_{\V'}^2,
\end{align}
for all $t\geq 0$ and $\theta>0$. 
\end{proof} 

\subsection{Absorbing ball in $\mathbb{V}$}\label{sec3.2} Let us now show that the semigroup $\S(t)$ has an absorbing ball in $\V$. The following Proposition provided the existence of absorbing ball in $\V$ for $r\in[1,3]$.

\begin{proposition}\label{prop3.2}
	For $\f\in\H$ and $r\in[1,3]$, we have 
	\begin{align}\label{3.36}
	\|\u(t)\|_{\V}\leq  M_2,
	\end{align}
 for all $ t\geq t_{\B}+\theta$, where $\theta>0$ and $t_{\B}$ depends on the initial data given in Proposition \ref{prop32}. 
\end{proposition}

\begin{proof}
	Let us take the inner product with $\A\u(\cdot)$ the first equation in \eqref{kvf}   to obtain 
	\begin{align}\label{3p27}
	&\frac{1}{2}\frac{\d }{\d t}\|\nabla \u(t)\|_{\H}^2+\mu\|\A\u(t)\|_{\H}^2+\alpha\|\nabla\u(t)\|_{\H}^2\nonumber\\&= -(\B(\u(t)),\A\u(t))-(\mathcal{C}(\u(t)),\A\u(t))+(\f,\A\u(t)).
	\end{align}
	We estimate $|(\f,\A\u(t))|$, using Cauchy-Schwarz and Young's inequalities as 
	\begin{align}\label{3p45}
	|(\f,\A\u)|\leq \|\f\|_{\H}\|\A\u\|_{\H}\leq \frac{\mu}{4}\|\A\u\|_{\H}^2+\frac{1}{\mu}\|\f\|^2_{\H}.
	\end{align}
	Using H\"older's, Agmon's and Young's inequalities, we estimate $|(\B(\u),\A\u)|$ as 
	\begin{align}\label{3a29}
	|(\B(\u),\A\u)|&\leq \|\u\|_{\L^{\infty}}\|\u\|_{\V}\|\A\u\|_{\H}\leq C\|\u\|_{\H}^{1/2}\|\u\|_{\V}\|\A\u\|_{\H}^{3/2} \nonumber\\&\leq \frac{\mu}{4}\|\A\u\|_{\H}^2+\frac{C}{\mu^3}\|\u\|_{\H}^2\|\u\|_{\V}^4.
	\end{align}
 For $r\in[1,3]$, we estimate $\beta|(\mathcal{C}(\u),\A\u)|$ using H\"older's, Gagliardo-Nirenberg's  and Young's inequalities as 
	\begin{align}\label{327}
\beta	|(\mathcal{C}(\u),\A\u)|&\leq\beta\|\mathcal{C}(\u)\|_{\H}\|\A\u\|_{\H}\leq\beta\|\u\|_{\wi\L^{2r}}^{r}\|\A\u\|_{\H}\leq C\beta\|\u\|_{\H}\|\u\|_{\V}^{r-1}\|\A\u\|_{\H}\nonumber\\&\leq\frac{\mu}{4}\|\A\u\|_{\H}^2+\frac{C\beta^2}{\mu}\|\u\|_{\H}^2\|\u\|_{\V}^{2r-2}. 
	\end{align}
	Note that $|\langle\f,\u\rangle|\leq \|\f\|_{\H}\|\u\|_{\H}\leq M_1\|\f\|_{\H},$ for all $t\geq t_{\B}$.  From \eqref{3p5}, we obtain 
	\begin{align}\label{3.45}
	\mu\int_0^t\|\u(s)\|_{\V}^2\d s&\leq\|\u_0\|_{\H}^2+2 \|\f\|_{\H}\|\u(t)\|_{\H}\leq \|\u_0\|_{\H}^2+2M_1\|\f\|_{\H}\nonumber\\&\leq M_1(M_1+2\|\f\|_{\H}),
	\end{align}
for $\u_0\in\mathcal{B}_1$ (otherwise on can keep $\|\u_0\|_{\H}^2$ as such in \eqref{3.45} and take $t\geq t_{\B}$). 

Moreover, multiplying  \eqref{3p27} by $t$, using \eqref{3p45}-\eqref{327}, we also infer that 
	\begin{align}\label{346}
	&\frac{\d }{\d t}\left[t\|\u(t)\|_{\V}^2\right]+\frac{\mu t}{2}\|\A\u(t)\|_{\H}^2+2\alpha t\|\u(t)\|_{\V}^2\nonumber\\&\leq \frac{2t}{\mu}\|\f\|^2_{\H}+\|\u(t)\|_{\V}^2+ \frac{CM_1^2}{\mu^3}t\|\u(t)\|_{\V}^4+\frac{C\beta^2}{\mu}t\|\u(t)\|_{\H}^2\|\u(t)\|_{\V}^{2r-2},
	\end{align}
	for $r\in[1,3]$. 
	Integrating the above inequality from $0$ to $t$, we find 
	\begin{align}\label{3a43}
&	t\|\u(t)\|_{\V}^2+\mu\int_0^ts\|\A\u(s)\|_{\H}^2\d s+2\alpha\int_0^ts\|\u(s)\|_{\V}^2\d s\nonumber\\&\leq \frac{t^2}{\mu}\|\f\|_{\H}^2+\int_0^t\|\u(s)\|_{\V}^2\d s +\frac{CM_1^2}{\mu^3}\int_0^ts\|\u(s)\|_{\V}^4\d s+\frac{C\beta^2 M_1^2}{\mu}\int_0^ts\|\u(s)\|_{\V}^{2r-2}\d s. 
	\end{align}
We first consider the case $r=3$. For $r=3$, applying Gronwall's inequality in \eqref{3a43}, we further have 
	\begin{align}
t\|\u(t)\|_{\V}^2&\leq\left(\frac{t^2}{\mu}\|\f\|_{\H}^2+\int_0^t\|\u(s)\|_{\V}^2\d s\right)\exp\left\{\left(\frac{CM_1^2}{\mu^3}+\frac{C\beta^2 M_1^{2}}{\mu}\right)\int_0^t\|\u(s)\|_{\V}^2\d s\right\}\nonumber\\&\leq  \left(\frac{t^2}{\mu}\|\f\|_{\H}^2+\frac{M_1}{\mu}(M_1+2\|\f\|_{\H})\right)\exp\left\{\frac{CM_1^2}{\mu^2}(M_1+2\|\f\|_{\H})\left(\frac{1}{\mu^2}+\beta^2\right)\right\},
	\end{align}
	where we used \eqref{3.45}. It can be further deduced that 
	\begin{align}\label{3p48}
&	\sup_{0<\e<t\leq 1} \|\u(t)\|_{\V}^2\leq  \left(\frac{1}{\mu}\|\f\|_{\H}^2+\frac{M_1}{\mu}(M_1+2\|\f\|_{\H})\right)e^{\left\{\frac{CM_1^2}{\mu^2}(M_1+2\|\f\|_{\H})\left(\frac{1}{\mu^2}+\beta^2\right)\right\}}.
	\end{align}
	Since our problem \eqref{kvf} is invariant under $t$-translations, from \eqref{3p48}, we get a uniform estimate for $\|\u(t)\|_{\V}^2$, $t>\gamma\e>0$, that is, 
	\begin{align}\label{334}
&	\sup_{0<\gamma\e<t<\infty}\|\u(t)\|_{\V}^2 \leq \left(\frac{1}{\mu}\|\f\|_{\H}^2+\frac{M_1}{\mu}(M_1+2\|\f\|_{\H})\right)e^{\left\{\frac{CM_1^2}{\mu^2}(M_1+2\|\f\|_{\H})\left(\frac{1}{\mu^2}+\beta^2\right)\right\}},
	\end{align}
for $0<\e\leq 1$, and hence the estimate \eqref{3.36} follows.  The cases of $r=1,2$ can be estimated in a similar way.
	\end{proof}

\begin{remark}\label{rem3.7}
	For $\f\in\V'$ and $r\in[1,\infty)$, one can show the existence of an absorbing ball in $\V$ in the following way. These type of estimates for the 2D Navier-Stokes equations are due to Ladyzhenskaya (see \cite{OAL2}).	Taking the inner product with $\u_t(\cdot)$ to the first equation in \eqref{kvf}, we find 
	\begin{align}\label{244}
&	\|\u_t(t)\|_{\H}^2+\frac{\mu}{2}\frac{\d}{\d t}\|\nabla\u(t)\|_{\H}^2+\frac{\alpha}{2}\frac{\d}{\d t}\|\u(t)\|_{\H}^2\nonumber\\&=-b(\u(t),\u(t),\u_t(t))-\beta\langle\mathcal{C}(\u(t)),\u_t(t)\rangle+\langle\f,\u_t(t)\rangle.
	\end{align}
	We estimate the three terms on the right hand side of the equality \eqref{244} as 
	\begin{align*}
	-b(\u,\u,\u_t)&=b(\u,\u_t,\u)\leq\|\u\|_{\L^4}^2\|\nabla\u_t\|_{\H},\\
	|\langle\mathcal{C}(\u),\u_t\rangle|&\leq\||\u|^{\frac{r-1}{2}}\u_t\|_{\H}\|\u\|_{\H},\\
	|\langle\f,\u_t\rangle|&\leq\|\f\|_{\V'}\|\nabla\u_t\|_{\H}.
	\end{align*}
	Using the above estimates in \eqref{244}, we obtain 
	\begin{align}\label{386}
	&\|\u_t(t)\|_{\H}^2+\frac{\mu}{2}\frac{\d}{\d t}\|\nabla\u(t)\|_{\H}^2+\frac{\alpha}{2}\frac{\d}{\d t}\|\u(t)\|_{\H}^2\nonumber\\&\leq\|\u(t)\|_{\L^4}^2\|\nabla\u_t(t)\|_{\H}+\beta\||\u(t)|^{\frac{r-1}{2}}\u_t(t)\|_{\H}\|\u(t)\|_{\H}+\|\f\|_{\V'}\|\nabla\u_t(t)\|_{\H},
	\end{align}
	for $r\in[1,\infty)$.

Let us now differentiate \eqref{kvf} with respect to $t$ again to find 
	\begin{equation}\label{387}
	\u_{tt}(t)+\mu\A\u_t(t)+\alpha\u_t(t)=-\B(\u(t),\u_t(t))-\B(\u_t(t),\u(t))-\beta\mathcal{C}'(\u(t))\u_t(t),
	\end{equation}
	for all $t\in(0,T)$. Taking the inner product with $\u_t(\cdot)$ to the equation \eqref{387}, we get 
	\begin{align}\label{388}
	\frac{1}{2}\frac{\d}{\d t}\|\u_t(t)\|_{\H}^2+\mu\|\nabla\u_t(t)\|_{\H}^2+\alpha\|\u_t(t)\|_{\H}^2+\beta\||\u(t)|^{\frac{r-1}{2}}\u_t(t)\|_{\H}^2=-b(\u_t(t),\u(t),\u_t(t)). 
	\end{align}
	Using H\"older's, Ladyzhenskaya's and Young's inequalities, we estimate $b(\u_t,\u,\u_t)$ as
	\begin{align}\label{392}
	|b(\u_t,\u,\u_t)|\leq\|\nabla\u\|_{\H}\|\u_t\|_{\L^4}^2\leq\sqrt{2}\|\nabla\u\|_{\H}\|\u_t\|_{\H}\|\nabla\u_t\|_{\H}\leq\frac{\mu}{2}\|\nabla\u_t\|_{\H}^2+\frac{1}{\mu}\|\nabla\u\|_{\H}^2\|\u_t\|_{\H}^2. 
	\end{align}
	Substituting \eqref{392} in \eqref{388}, we deduce that 
	\begin{align}\label{393}
	&\frac{1}{2}\frac{\d}{\d t}\|\u_t(t)\|_{\H}^2+\frac{\mu}{2}\|\nabla\u_t(t)\|_{\H}^2+\alpha\|\u_t(t)\|_{\H}^2+\beta\||\u(t)|^{\frac{r-1}{2}}\u_t(t)\|_{\H}^2\leq\frac{1}{\mu}\|\nabla\u(t)\|_{\H}^2\|\u_t(t)\|_{\H}^2.
	\end{align}
	Using \eqref{386} and \eqref{393}, we further have 
	\begin{align}\label{394}
	&\frac{\d}{\d t}\left[t^2\|\u_t(t)\|_{\H}^2+2\mu t\|\nabla\u(t)\|_{\H}^2\right]+\mu t^2\|\nabla\u_t(t)\|_{\H}^2+2\alpha t^2\|\u_t(t)\|_{\H}^2+2\beta t^2\||\u(t)|^{\frac{r-1}{2}}\u_t(t)\|_{\H}^2\nonumber\\&\quad+4t\|\u_t(t)\|_{\H}^2\nonumber\\&\leq 2t\|\u_t(t)\|_{\H}^2+2\mu\|\nabla\u(t)\|_{\H}^2+\frac{t^2}{\mu}\|\nabla\u(t)\|_{\H}^2\|\u_t(t)\|_{\H}^2+4t\|\u(t)\|_{\L^4}^2\|\nabla\u_t(t)\|_{\H}\nonumber\\&\quad+4\beta t\||\u(t)|^{\frac{r-1}{2}}\u_t(t)\|_{\H}\|\u(t)\|_{\H}+4t\|\f\|_{\V'}\|\nabla\u_t(t)\|_{\H}.
	\end{align}
	We estimate the terms  $4t\|\u\|_{\L^4}^2\|\nabla\u_t\|_{\H}$,  $ 4 t\|\f\|_{\V'}\|\nabla\u_t\|_{\H}$ and $4\beta t\||\u|^{\frac{r-1}{2}}\u_t\|_{\H}\|\u\|_{\H}$ as follows:
	\begin{align*}
	4 t\|\u\|_{\L^4}^2\|\nabla\u_t\|_{\H}&\leq 4\sqrt{2} t \|\u\|_{\H}\|\nabla\u\|_{\H}\|\nabla\u_{t}\|_{\H}\leq\e\mu t^2\|\nabla\u_t\|_{\H}^2+\frac{8}{\e\mu}\|\u\|_{\H}^2\|\nabla\u\|_{\H}^2,\\
	4\gamma t\|\f\|_{\V'}\|\nabla\u_t\|_{\H}&\leq \e\mu t^2\|\nabla\u_t\|_{\H}^2+\frac{4}{\e\mu}\|\f\|_{\V'}^2, \\
	4\beta t\||\u|^{\frac{r-1}{2}}\u_t\|_{\H}\|\u\|_{\H}&\leq \beta t^2\||\u|^{\frac{r-1}{2}}\u_t\|_{\H}^2+4\|\u\|_{\H}^2,
	\end{align*}
	where we used Ladyzhenskaya's and Young's inequalities. Using the above estimates in \eqref{394} and choosing $\e=\frac{1}{4}$, we deduce that 
	\begin{align}\label{395}
	&\frac{\d}{\d t}\left[t^2\|\u_t(t)\|_{\H}^2+2\mu t\|\nabla\u(t)\|_{\H}^2\right]+\frac{\mu t^2}{2}\|\nabla\u_t(t)\|_{\H}^2+2\alpha t^2\|\u_t(t)\|_{\H}^2+\beta t^2\||\u(t)|^{\frac{r-1}{2}}\u_t(t)\|_{\H}^2\nonumber\\&\quad+2t\|\u_t(t)\|_{\H}^2\nonumber\\&\leq 2\mu\|\nabla\u(t)\|_{\H}^2+\frac{t^2}{\mu}\|\nabla\u(t)\|_{\H}^2\|\u_t(t)\|_{\H}^2+\frac{32}{\mu}\|\u(t)\|_{\H}^2\|\nabla\u(t)\|_{\H}^2+\frac{16}{\mu}\|\f\|_{\V'}^2+4\|\u(t)\|_{\H}^2.
	\end{align}
	Integrating the inequality \eqref{395}, we find 
	\begin{align}\label{396}
	&t^2\|\u_t(t)\|_{\H}^2+2\mu t\|\nabla\u(t)\|_{\H}^2\nonumber\\&\quad+\int_0^t\left[\frac{\mu s^2}{2}\|\nabla\u_t(s)\|_{\H}^2+\beta s^2\||\u(s)|^{\frac{r-1}{2}}\u_t(s)\|_{\H}^2+2s\|\u_t(s)\|_{\H}^2\right]\d s\nonumber\\&\leq 2\mu\int_0^t\|\nabla\u(s)\|_{\H}^2\d s+\frac{1}{\mu}\int_0^ts^2\|\nabla\u(s)\|_{\H}^2\|\u_t(s)\|_{\H}^2\d s+\frac{32}{\mu}\int_0^t\|\u(s)\|_{\H}^2\|\nabla\u(s)\|_{\H}^2\d s\nonumber\\&\quad+\frac{16t}{\mu}\|\f\|_{\V'}^2+4\int_0^t\|\u(s)\|_{\H}^2\d s.
	\end{align}
	An application of Gronwall's inequality in \eqref{396} yields 
	\begin{align}\label{397}
	&t^2\|\u_t(t)\|_{\H}^2+2\mu t\|\nabla\u(t)\|_{\H}^2\nonumber\\&\leq \bigg\{2\mu\int_0^t\|\u(s)\|_{\V}^2\d s+\frac{16t}{\mu}\|\f\|_{\V'}^2+4\int_0^t\|\u(s)\|_{\H}^2\d s+\frac{32}{\mu}\int_0^t\|\u(s)\|_{\H}^2\|\u(s)\|_{\V}^2\d s\bigg\}\nonumber\\&\quad\times\exp\left\{\frac{1}{\mu}\int_0^t\|\u(s)\|_{\V}^2\d s\right\} .
	\end{align}
	Thus, from \eqref{397}, it is immediate that 
	\begin{align}\label{398}
	t\|\u(t)\|_{\V}^2&\leq \frac{1}{\mu} \bigg\{\left(M_1^2+\frac{t}{\mu}\|\f\|_{\V'}^2\right)+\frac{8t}{\mu}\|\f\|_{\V'}^2+2tM_1^2+\frac{16M_1^2}{\mu^2}\left(M_1^2+\frac{t}{\mu}\|\f\|_{\V'}^2\right)^2\bigg\}\nonumber\\&\quad\times\exp\left\{\frac{1}{\mu^2}\left(M_1^2+\frac{t}{\mu}\|\f\|_{\V'}^2\right)\right\} .
	\end{align}
	where we used  \eqref{3p8} for $\u_0\in\mathcal{B}_1$ and $t\geq t_{\B}$.  It can be further deduced that 
	\begin{align}\label{348}
	\sup_{0<\e<t\leq 1}\|\u(t)\|_{\V}^2& \leq \frac{1}{\mu\e} \bigg\{\left(M_1^2+\frac{1}{\mu}\|\f\|_{\V'}^2\right)+\frac{8}{\mu}\|\f\|_{\V'}^2+2M_1^2+\frac{16M_1^2}{\mu^2}\left(M_1^2+\frac{1}{\mu}\|\f\|_{\V'}^2\right)^2\bigg\}\nonumber\\&\quad\times\exp\left\{\frac{1}{\mu^2}\left(M_1^2+\frac{1}{\mu}\|\f\|_{\V'}^2\right)\right\}=:M_3,
	\end{align}
	Since our problem \eqref{kvf} is invariant under $t$-translations, from \eqref{348}, we get a uniform estimate for $\|\u(t)\|_{\V}^2$, $t>\e>0$, that is, 
	\begin{align}\label{3100}
	&\sup_{0<\e<t<\infty}\|\u(t)\|_{\V}^2 \leq M_3, \ \e\leq 1,
	\end{align}
	for all $t\geq t_{\B}+\theta,$ for some $\theta>0$. 
\end{remark}

	\begin{theorem}\label{thm3.4}
		The dynamical system associated with the 2D CBF equations \eqref{kvf} possesses an attractor $\mathscr{A}_{\mathrm{glob}}$ that is compact, connected, and global in $\H$. This attractor is a bounded set in $\V$.  Moreover, $\mathscr{A}_{\mathrm{glob}}$ 	attracts the bounded sets of $\H$ and  is also maximal among the functional 	invariant sets bounded in $\H$. 
	\end{theorem}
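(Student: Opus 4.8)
The plan is to verify the three standard ingredients that guarantee a compact global attractor — continuity of the semigroup, existence of a bounded absorbing set, and asymptotic compactness — and then to invoke the classical abstract result (see, e.g., Theorem I.1.1 in \cite{Te2} and the corresponding statements in \cite{ICh,OAL1}), which asserts that a continuous, bounded dissipative, asymptotically compact semigroup on the connected phase space $\H$ admits a connected compact global attractor. Almost all of the analytic work needed for this has already been isolated in the preceding results, so the proof amounts to assembling them.

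First I would record that $\{\S(t)\}_{t\geq 0}$ is a continuous semigroup on $\H$: continuity with respect to the initial datum on bounded sets is exactly Lemma \ref{lem3.1}, while continuity in $t$ follows from the regularity $\u\in\C([0,T];\H)$ guaranteed by Theorem \ref{weake} together with Remark \ref{rem5.10}. Next, Proposition \ref{prop32} furnishes the bounded absorbing set $\mathcal{B}_1\subset\H$, i.e. for every bounded $\mathcal{B}\subset\H$ there is an entering time $t_{\mathcal{B}}$ with $\S(t)\mathcal{B}\subset\mathcal{B}_1$ for $t\geq t_{\mathcal{B}}$; thus the semigroup is bounded dissipative.

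The decisive step is asymptotic compactness, and in the bounded-domain setting the argument is short. Since the present section assumes $\f\in\V'$, Remark \ref{rem3.7} applies and produces the ball $\mathcal{B}_2$ of radius $M_3$ in $\V$ which is absorbing: for every bounded $\mathcal{B}\subset\H$ there is a time after which $\S(t)\mathcal{B}\subset\mathcal{B}_2$ (the analytically hard uniform $\V$-bound, including the absorption term $\mathcal{C}(\u)$, is precisely what Proposition \ref{prop3.2} and Remark \ref{rem3.7} establish). Because $\Omega$ is bounded, the embedding $\V\subset\H$ is compact, so $\mathcal{B}_2$ is relatively compact in $\H$. Hence, given any sequence $t_n\to\infty$ and any bounded $\{\u_n\}\subset\H$, for $n$ large one has $\S(t_n)\u_n\in\mathcal{B}_2$, and compactness of the embedding yields a subsequence of $\{\S(t_n)\u_n\}$ converging in $\H$; this is exactly asymptotic compactness.

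Finally I would assemble the conclusion. The abstract theorem then gives a compact global attractor, realized as the $\omega$-limit set of the absorbing ball, $\mathscr{A}=\omega(\mathcal{B}_1)=\bigcap_{s\geq 0}\overline{\bigcup_{t\geq s}\S(t)\mathcal{B}_1}$, the closure being taken in $\H$. Invariance $\S(t)\mathscr{A}=\mathscr{A}$, attraction of every bounded subset of $\H$, and maximality among bounded invariant sets follow from the standard characterization of $\omega$-limit sets under these hypotheses. Connectedness is inherited from the connectedness of the phase space $\H$: since $\mathcal{B}_1$ is a ball, hence connected, and each $\S(t)$ is continuous, every $\S(t)\mathcal{B}_1$ is connected, and the attractor, being the $\omega$-limit of this nested family in a connected Banach space, is connected as well. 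I expect no genuine obstacle here; the only point deserving minor care is confirming that the compact $\V$-absorbing ball $\mathcal{B}_2$ delivers asymptotic compactness uniformly over all bounded families of initial data, which is immediate once the entering time $t_{\mathcal{B}}$ from Remark \ref{rem3.7} is used.
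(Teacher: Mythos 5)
Your proposal is correct and follows essentially the same route as the paper: both obtain an absorbing ball in $\V$ (via Proposition \ref{prop3.2} / Remark \ref{rem3.7}), use the compact embedding $\V\subset\H$ in the bounded domain to conclude that the semigroup is (uniformly/asymptotically) compact, and then invoke Theorem I.1.1 of \cite{Te2}. Your explicit appeal to Remark \ref{rem3.7} for the case $\f\in\V'$ (rather than Proposition \ref{prop3.2}, which needs $\f\in\H$) is in fact the slightly more careful reading of the paper's own hypotheses.
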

\begin{proof}
	Let us denote the right hand side of \eqref{3.36} as $M_2$. We then conclude that the ball $\mathcal{B}_2=\B(0,M_2)$ of $\V$ is an absorbing set in $\V$, which is compact in $\H$, for the semigroup $\S(t)$. Moreover, if $\B$ is any bounded set in $\H$, then $\S(t)\B\subset\mathcal{B}_2$, for $t\geq t_{\B}+\theta$. This shows the existence of an absorbing set	in $\V$  and also that the operators $\S(t)$ are uniformly compact, that is, for every bounded set $\B$, there exists $t_0$ which is dependent on
	$\B$ such that
$\bigcup\limits_{t\geq t_0}\S(t)\B$
	is relatively compact in $\H$. Thus, using the Theorem 1.1, Chapter I, \cite{Te2}, we obtain that the dynamical system associated with the 2D CBF equations \eqref{kvf} 	possesses an attractor $\mathscr{A}_{\mathrm{glob}}$ that is compact, connected, and global in $\H$. Also, $\mathscr{A}_{\mathrm{glob}}$	attracts the bounded sets of $\H$ and $\mathscr{A}_{\mathrm{glob}}$ is maximal among the functional	invariant sets bounded in $\H$. 
\end{proof}

\section{Global Attractor (Poincar\'e Domains)}\label{sec4}\setcounter{equation}{0} 
Let us now consider the case of Poincar\'e domains. We first show a result on the weak continuity of the semigroup $\{\S(t)\}_{t\geq 0}$. Similar results for 2D Navier-Stokes equations have been obtained in  \cite{RR} and we follow this work for our model also.
\begin{lemma}\label{lem4.1}
	Let $\{\u_0^n\}_{n\in\N}$ be a weakly convergent sequence in $\H$ converging to $\u_0\in\H$. Then 
	\begin{align}\label{3.34}
\S(t)\u_0^n&\xrightharpoonup{w} \S(t)\u_0\ \text{ in }\ \H\ \text{ and } \ t\geq 0, \\
	\S(\cdot)\u_0^n &\xrightharpoonup{w}\S(\cdot)\u_0\ \text{ in } \ \mathrm{L}^2(0,T;\V), \ \text{ for all }\ T> 0, \  \text{ and }\ \label{3.35}\\
		\S(\cdot)\u_0^n &\xrightharpoonup{w}\S(\cdot)\u_0\ \text{ in } \ \mathrm{L}^{r+1}(0,T;\wi\L^{r+1}), \ \text{ for all }\ T> 0.\label{3p36}
	\end{align}
	\end{lemma}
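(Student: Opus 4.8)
The plan is to extract weak limits of the solutions $\u^n:=\S(\cdot)\u_0^n$, to identify every such limit with $\S(\cdot)\u_0$ via the uniqueness of weak solutions (Theorem \ref{weake}), and then to upgrade subsequential convergence to convergence of the whole sequence. First, since $\u_0^n\xrightharpoonup{w}\u_0$ in $\H$ the sequence is bounded, $\sup_n\|\u_0^n\|_{\H}\leq R<\infty$; inserting this into the energy estimate \eqref{3p8} produces bounds, uniform in $n$ and on each finite interval $[0,T]$, for $\u^n$ in $\mathrm{L}^{\infty}(0,T;\H)\cap\mathrm{L}^2(0,T;\V)\cap\mathrm{L}^{r+1}(0,T;\wi\L^{r+1})$. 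Reading off $\partial_t\u^n$ from \eqref{kvf} and using $\|\A\u^n\|_{\V'}\leq\|\u^n\|_{\V}$ from \eqref{2.7a}, the bound \eqref{2.9a} for $\B$, and $\|\mathcal{C}(\u^n)\|_{\wi\L^{(r+1)/r}}=\|\u^n\|_{\wi\L^{r+1}}^r$, gives a uniform bound for $\partial_t\u^n$ in $\mathrm{L}^{\frac{r+1}{r}}(0,T;\V')$. By Banach--Alaoglu I then pass to a subsequence (not relabelled) with $\u^n\xrightharpoonup{w}\u$ weak-$\star$ in $\mathrm{L}^{\infty}(0,T;\H)$, weakly in $\mathrm{L}^2(0,T;\V)$ and in $\mathrm{L}^{r+1}(0,T;\wi\L^{r+1})$, and $\partial_t\u^n\xrightharpoonup{w}\partial_t\u$ in $\mathrm{L}^{\frac{r+1}{r}}(0,T;\V')$.

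The crux is to pass to the limit in the nonlinear terms $\B(\u^n,\u^n)$ and $\mathcal{C}(\u^n)$, and here the difficulty particular to Poincar\'e (possibly unbounded) domains appears: the embedding $\V\subset\H$ is \emph{not} compact, so no global Aubin--Lions argument is available. Following the localisation used for the $2$D Navier--Stokes equations in \cite{RR}, I would fix an arbitrary bounded open set $\mathcal{O}$ with $\overline{\mathcal{O}}\subset\Omega$. On $\mathcal{O}$ the embedding $\H^1(\mathcal{O})\hookrightarrow\L^2(\mathcal{O})$ is compact, and the uniform bounds on $\u^n$ in $\mathrm{L}^2(0,T;\V)$ and on $\partial_t\u^n$, restricted to $\mathcal{O}$, place us in the setting of the Aubin--Lions--Simon lemma; hence $\u^n\to\u$ strongly in $\mathrm{L}^2(0,T;\L^2(\mathcal{O}))$ and, along a further subsequence, a.e. on $(0,T)\times\mathcal{O}$. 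Testing the weak formulation of \eqref{kvf} against a divergence-free $\boldsymbol{\varphi}\in\C_0^{\infty}(\Omega;\R^2)$ supported in $\mathcal{O}$, this local strong convergence lets me pass to the limit in $\langle\B(\u^n,\u^n),\boldsymbol{\varphi}\rangle$ (writing $\B(\u^n,\u^n)-\B(\u,\u)=\B(\u^n-\u,\u^n)+\B(\u,\u^n-\u)$ and bounding the factors in $\L^4(\mathcal{O})$), while the a.e. convergence together with the uniform $\mathrm{L}^{r+1}(0,T;\wi\L^{r+1})$ bound and the classical weak--a.e. convergence lemma identifies the weak limit of $\mathcal{C}(\u^n)$ with $\mathcal{C}(\u)$ on $\mathcal{O}$. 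As $\mathcal{O}$ is arbitrary and such $\boldsymbol{\varphi}$ are dense in $\V$, the limit $\u$ is a weak solution of \eqref{kvf} on $[0,T]$.

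It remains to match the initial datum and to conclude. By Remark \ref{rem5.10} the limit satisfies $\u\in\C_w([0,T];\H)$, and integrating the weak formulation against a test function that vanishes at $t=T$ transfers $\u^n(0)=\u_0^n\xrightharpoonup{w}\u_0$ into $\u(0)=\u_0$; thus $\u$ is \emph{the} weak solution issuing from $\u_0$, so $\u=\S(\cdot)\u_0$ by Theorem \ref{weake}. Since this limit is independent of the chosen subsequence, a standard subsequence-of-subsequence (contradiction) argument upgrades the convergence to the full sequence, giving \eqref{3.35} and \eqref{3p36}. Finally, for the pointwise-in-time assertion \eqref{3.34} I would use that $\{\u^n(t)\}$ is bounded in $\H$ uniformly in $t$ while $\{\u^n\}$ is equicontinuous into $\V'$ (from the uniform bound on $\partial_t\u^n$); for fixed $t$ and $\boldsymbol{\psi}\in\H$, combining this $\V'$-equicontinuity with the already established $\mathrm{L}^2(0,T;\V)$-convergence yields $(\S(t)\u_0^n,\boldsymbol{\psi})\to(\S(t)\u_0,\boldsymbol{\psi})$, which is \eqref{3.34}. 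The anticipated main obstacle is precisely the loss of global compactness of $\V\subset\H$, which is why the argument is localised to bounded subdomains $\mathcal{O}$ where Aubin--Lions applies.
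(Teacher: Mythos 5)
Your proposal is correct and follows essentially the same route as the paper: uniform bounds from the energy estimate, a uniform bound on $\partial_t\u^n$ in $\mathrm{L}^{\frac{r+1}{r}}(0,T;\V')$, localisation to bounded subdomains where $\H^1_0\subset\L^2$ is compact to handle the failure of global compactness, identification of the limit via uniqueness of weak solutions, and a subsequence argument to upgrade to the whole sequence, with \eqref{3.34} obtained from equiboundedness/equicontinuity of $(\u^n(\cdot),\psi)$ and density. The only cosmetic difference is that the paper derives an integrated time-translation estimate and invokes the compactness criterion of Theorem 16.3 in \cite{MMR} on the truncated functions, whereas you invoke Aubin--Lions--Simon directly on the restricted sequence; these are interchangeable here.
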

\begin{proof}
	Let $\u^n(t)=\S(t)\u_0^n$ and $\u(t)=\S(t)\u_0$, for $t\geq 0$. From \eqref{54} and \eqref{3p8}, we have 
	\begin{align}\label{3.33}
	\{\u_n\}_{n\in\N}\ \text{ is bounded in } \ \mathrm{L}^{\infty}(0,T;\H)\cap\mathrm{L}^2(0,T;\V)\cap\mathrm{L}^{r+1}(0,T;\wi\L^{r+1}), \ \text{ for all } \ T> 0,
	\end{align}
and the estimate 
	\begin{align}\label{3.37}
&	\sup_{0\leq t\leq T}\|\u^n(t)\|_{\H}^2+2\mu\int_0^T\|\u^n(t)\|_{\V}^2\d t+2\alpha\int_0^T\|\u^n(t)\|_{\H}^2\d t+2\beta\int_0^T\|\u^n(t)\|_{\wi\L^{r+1}}^{r+1}\d t\nonumber\\&\leq C\left(\|\u_0\|_{\H},T,\|\f\|_{\V'}\right). 
	\end{align}
	Remember that $\frac{\d{\u^n}}{\d t}=\mathbf{f}-\mu\A\u^n-\alpha\u^n-\B(\u^n)-\beta\mathcal{C}(\u^n)$ in $\V'$, for a.e. $t\in(0,T)$.   For  all $\psi\in\mathrm{L}^{r}(0,T;\V)$,  it can be easily seen that 
	\begin{align}\label{3.39}
&	\int_0^T\left|\left<\frac{\d{\u^n(t)}}{\d t},\psi(t)\right>\right|\d t\nonumber\\& \leq\mu\int_0^T|\langle\A\u^n(t),\psi(t)\rangle|\d t+\int_0^T|\langle\B(\u^n(t)),\psi(t)\rangle|\d t+\alpha\int_0^T|(\u^n(t),\psi(t))|\d t\nonumber\\&\quad+\beta\int_0^T|\langle\mathcal{C}(\u^n(t)),\psi(t)\rangle|\d t+\int_0^T|\langle\f,\psi(t)\rangle|\d t\nonumber\\&\leq\mu\int_0^T\|\u^n(t)\|_{\V}\|\psi(t)\|_{\V}\d t+\int_0^T\|\B(\u^n(t))\|_{\V'}\|\psi(t)\|_{\V}\d t+\alpha\int_0^T\|\u^n(t)\|_{\H}\|\psi(t)\|_{\H}\d t\nonumber\\&\quad+\beta\int_0^T\|\u^n(t)\|_{\wi\L^{r+1}}^r\|\psi(t)\|_{\wi\L^{r+1}}\d t+\int_0^T\|\f\|_{\V'}\|\psi(t)\|_{\V}\d t\nonumber\\&\leq\bigg\{\mu\left(\int_0^T\|\u^n(t)\|_{\V}^2\d t\right)^{1/2}+\sqrt{2}\sup_{t\in[0,T]}\|\u^n(t)\|_{\H}\left(\int_0^T\|\u_n(t)\|_{\V}^2\d t\right)^{1/2}+\sqrt{T}\|\f\|_{\V'}\nonumber\\&\qquad+C\left(\int_0^T\|\u^n(t)\|_{\H}^2\d t\right)^{1/2}\bigg\}\left(\int_0^T\|\psi(t)\|_{\V}^2\d t\right)^{1/2}\nonumber\\&\quad+C\beta\left(\int_0^T\|\u^n(t)\|_{\wi\L^{r+1}}^{r+1}\d t\right)^{\frac{r}{r+1}}\left(\int_0^T\|\psi(t)\|_{\V}^{r+1}\d t\right)^{\frac{1}{r+1}}\nonumber\\&\leq C\left(\|\u_0\|_{\H},T,\mu,\beta,\|\f\|_{\V'}\right),
	\end{align}
since $\V\subset\wi\L^{r+1}$, for all $r\geq 1$.	Thus, it is immediate that
	\begin{align}
	\left\{\frac{\d{\u^n}}{\d t}\right\}_{n\in\N} \ \text{ is bounded in }\ \mathrm{L}^{\frac{r+1}{r}}(0,T;\V'), \ \text{ for all }\ T>0.
	\end{align}
	For all $\psi\in\V$ and $0\leq t\leq t+\tau\leq T$, with $T>0$, we have 
	\begin{align}\label{335} 
	(\u^n(t+\tau)-\u^n(t),\psi)&=\int_t^{t+\tau}\left<\frac{\d\u^n}{\d s}(s), \psi\right>\d s\leq \|\psi\|_{\V}\int_t^{t+\tau}\left\|\frac{\d\u^n}{\d t}(s)\right\|_{\V'}\d s\nonumber\\&\leq \left\|\frac{\d\u^n}{\d t}\right\|_{\mathrm{L}^{\frac{r+1}{r}}(0,T;\V')}\tau^{\frac{1}{r+1}}\|\psi\|_{\V}\leq C(T)\tau^{\frac{1}{r+1}}\|\psi\|_{\V},
	\end{align}
	where $C(\cdot)$ is a positive constant independent of $n$. Since $\v=\u^n(t+\tau)-\u^n(t)\in\V$, for a.e. $t\in(0,T)$, choosing it in \eqref{335}, we obtain 
	\begin{align}
	\|\u^n(t+\tau)-\u^n(t)\|_{\H}^2\leq C(T)\tau^{\frac{1}{r+1}}\|\u^n(t+\tau)-\u^n(t)\|_{\V}.
	\end{align}
	Integrating from $0$ to $T-\tau$, we further find 
	\begin{align}
	\int_0^{T-\tau}	\|\u^n(t+\tau)-\u^n(t)\|_{\H}^2\d t&\leq C(T)\tau^{\frac{1}{r+1}}\int_0^{T-\tau}\|\u^n(t+\tau)-\u^n(t)\|_{\V}\d t\nonumber\\&\leq C(T)\tau^{\frac{1}{r+1}}(T-\tau)^{1/2}\left(\int_0^{T-\tau}\|\u^n(t+\tau)-\u^n(t)\|_{\V}^2\d t\right)^{1/2}\nonumber\\&\leq \widetilde{C}(T)\tau^{\frac{1}{r+1}},
	\end{align}
	where we used the Cauchy-Schwarz inequality and \eqref{3.33}. Also, $\widetilde{C}(T)$ is an another positive constant independent of $n$. Furthermore, we have 
		\begin{align}\label{3p43}
\lim_{\tau\to 0}\sup_n	\int_0^{T-\tau}	\|\u^n(t+\tau)-\u^n(t)\|_{\L^2(\Omega_R)}^2\d t=0,
	\end{align}
	for all $R>0$, where $\Omega_R= \Omega\cap\{x\in\R^2: |x|\leq R\}$. 
	  
	  Let us now consider a truncation function $\chi\in\C^1(\R^+)$ with $\chi(s)=1$ for $s\in[0,1]$ and $\chi(s)=0$ for $s\in[2,\infty)$. For each $R>0$, let us define $$\u^{n,R}(x):=\chi\left(\frac{|x|^2}{R^2}\right)\u^n(x), \ \text{ for }x\in\Omega_{2R}.$$ It can easily be seen from \eqref{3p43} that 
	  	\begin{align}
	  \lim_{\tau\to 0}\sup_n	\int_0^{T-\tau}	\|\u^{n,R}(t+\tau)-\u^{n,R}(t)\|_{\L^2(\Omega_{2R})\times \H^1(\Omega_{2R})}^2\d t=0,
	  \end{align}
	  for all $T,R>0$. Moreover, from \eqref{3.33},    for all $T,R>0$, we infer that 
	  	\begin{align}
	&  \{\u^{n,R}\}_{n\in\N}\ \text{ is bounded in }\ \mathrm{L}^{\infty}(0,T;\L^2(\Omega_{2R}))\cap\mathrm{L}^2(0,T;\H^1_0(\Omega_{2R})).
	  \end{align}
	  Since the injection $\H_0^1(\Omega_{2R})\subset\L^2{(\Omega_{2R})}$ is compact, we can apply Theorem 16.3, \cite{MMR} (see Theorem 13.3, \cite{Te1}) to obtain 
	    	\begin{align}\label{3.47}
	    \{\u^{n,R}\}_{n\in\N}\ \text{ is relatively compact in } \ \mathrm{L}^{2}(0,T;\L^2(\Omega_{2R})),
	    \end{align}
	    for all $T,R>0$. From \eqref{3.47}, we further infer that 
	    \begin{align}\label{3.48}
	    \{\u^{n}\}_{n\in\N}\ \text{ is relatively compact in } \ \mathrm{L}^{2}(0,T;\L^2(\Omega_{R})),
	    \end{align}
	    for all $T,R>0$. Using the estimates \eqref{3.33} and \eqref{3.48}, by a diagonal argument, we can extract a subsequence $\{\u_{n_j}\}_{j\in\N}$ of $\{\u_n\}_{n\in\N}$ such that 
	    \begin{equation}\label{3.49}
	    \left\{
	    \begin{aligned}
	    \u_{n_j}&\xrightharpoonup{w^*}\widetilde{\u}, \ \text{   in }\ \mathrm{L}^{\infty}(0,T;\H), \\
	     \u_{n_j}&\xrightharpoonup{w}\widetilde{\u}, \ \text{  in }\ \mathrm{L}^{2}(0,T;\V),\\
	      \u_{n_j}&\xrightharpoonup{w}\widetilde{\u}, \ \text{  in }\ \mathrm{L}^{r+1}(0,T;\wi\L^{r+1}),\\
	      \u_{n_j}&\to\widetilde{\u}, \ \text{  in }\ \mathrm{L}^{2}(0,T;\L^2(\Omega_R)),
	    \end{aligned}\right.
	    \end{equation}
	    as $j\to\infty$, for all $T, R>0$, for some $$\widetilde{\u}\in \mathrm{L}^{\infty}(0,T;\H)\cap\mathrm{L}^{2}(0,T;\V)\cap \mathrm{L}^{r+1}(0,T;\wi\L^{r+1}). $$ Using the convergence given in \eqref{3.49}, one can easily pass limit in the equation for $\u^n$ to obtain that $\widetilde{\u}$ is a solution of \eqref{kvf} (in the weak sense) with $\widetilde{\u}(0)=\u_0$ (cf. \cite{KKMT}). Since the solution of \eqref{kvf} is unique, we further have $\widetilde{\u}=\u$. Uniqueness of the weak solution also suggests us that the whole sequence $\{\u^n\}_{n\in\mathbb{N}}$ converges to $\u$. From the second and third convergences given in \eqref{3.49}, we have 
	    \begin{align*}
	     \S(\cdot)\u_0^n=\u_{n}&\xrightharpoonup{w}{\u}=\S(\cdot)\u_0 \ \text{ in }\ \mathrm{L}^{2}(0,T;\V)\ \text{ and }\ \mathrm{L}^{r+1}(0,T;\wi\L^{r+1}),
	    \end{align*}
	    which proves \eqref{3.35} and \eqref{3p36}. 
	    
	    Let us now show the convergence given in \eqref{3.34}. From the final convergence given \eqref{3.49}, we immediately have $\u^n(t)$ converges strongly to $\u(t)$ in $\L^2(\Omega_R)$, for a.e. $t\geq 0$ and all $R>0$. Thus, for all $\psi\in \mathscr{V}$ and $  \text{ a.e. } t\in\R^+$, we  have  \begin{align}(\u^n(t),\psi)\to (\u(t),\psi), \  \text{ a.e. }\ t\in\R^+. \end{align} Furthermore, using \eqref{3.37} and \eqref{3.39}, one can easily show that $\{(\u^n(t),\psi)\}_{n\in\N}$ is equibounded and equicontinuous on $[0,T],$ for all $T>0$. Therefore, 
	    for all $\psi\in \mathscr{V}$, we get 
	    \begin{align}\label{3.52}
	    (\u^n(t),\psi)\to(\u(t),\psi), \ \text{ for all } \ t\in\R^+.
	    \end{align}
	    The convergence in \eqref{3.34} follows easily from \eqref{3.52}, making use of the fact that $\mathscr{V}$ is dense in $\H$. 
	\end{proof}
	
	    \subsection{Asymptotic compactness} In this subsection, we prove the existence of a global attractor for the system \eqref{kvf} in Poincar\'e domains.  This follows from the following general result. 
	    \begin{theorem}[Theorem I.1.1, \cite{Te2}, \cite{OAL1,RR}]\label{thm4.2} Let $\mathscr{E}$ be a complete metric space and let $\{\S(t)\}_{t\geq 0}$ be a semigroup of  	continuous (nonlinear) operators in $\mathscr{E}$. If (and only if) $\{\S(t)\}_{t\geq 0}$ possesses an absorbing set  	$\mathcal{B}$ bounded in $\mathscr{E}$ and is asymptotically compact in $\mathscr{E},$ then $\{\S(t)\}_{t\geq 0}$ possesses a (compact) global attractor $\mathscr{A}_{\mathrm{glob}}=\omega(\mathcal{B})$. Furthermore, if $t\mapsto\S(t)\u_0$ is continuous from $\R^+$ into $\mathscr{E}$ and $\mathcal{B}$	is connected in $\mathscr{E}$, then $\mathscr{A}_{\mathrm{glob}}$ is connected in $\mathscr{E}$. 
	    \end{theorem}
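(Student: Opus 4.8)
The statement is the classical characterization of the global attractor as the $\omega$-limit set of an absorbing set, so the plan is to construct $\mathscr{A}=\omega(\mathcal{B})$, where $\omega(\mathcal{B})=\bigcap_{s\geq 0}\overline{\bigcup_{t\geq s}\S(t)\mathcal{B}}$, and to verify in turn that it is nonempty and compact, invariant, attracting, maximal, and (under the extra hypotheses) connected. I would focus on the substantive ``if'' implication; the converse is immediate, since a compact global attractor supplies a bounded absorbing neighborhood and its attraction property forces $\{\S(t)\}$ to be asymptotically compact. Throughout I would use the sequential description $\omega(\mathcal{B})=\{x\in\mathscr{E}:\ \exists\, t_n\to\infty,\ \exists\, b_n\in\mathcal{B},\ \S(t_n)b_n\to x\}$. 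Nonemptiness is then read off immediately: fixing $b\in\mathcal{B}$ and any $t_n\to\infty$, asymptotic compactness produces a convergent subsequence of $\{\S(t_n)b\}$ whose limit lies in $\omega(\mathcal{B})$.

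For compactness I would take $\{x_k\}\subset\omega(\mathcal{B})$, approximate each $x_k$ within $1/k$ by a point $\S(\tau_k)c_k$ with $\tau_k\to\infty$, $c_k\in\mathcal{B}$ (a diagonal selection over the defining sequences of the $x_k$), and extract from $\{\S(\tau_k)c_k\}$ a convergent subsequence by asymptotic compactness; its limit belongs to $\omega(\mathcal{B})$ and is also a limit of the $x_k$, so $\omega(\mathcal{B})$ is sequentially compact, hence compact. Invariance splits into two inclusions. The forward inclusion $\S(t)\omega(\mathcal{B})\subseteq\omega(\mathcal{B})$ uses only the continuity of $\S(t)$ together with the semigroup law $\S(t)\S(t_n)=\S(t+t_n)$. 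The backward inclusion $\omega(\mathcal{B})\subseteq\S(t)\omega(\mathcal{B})$ is the delicate step: given $x=\lim_n\S(t_n)b_n$, I would pull back to the sequence $\S(t_n-t)b_n$ (defined once $t_n\geq t$), use asymptotic compactness to extract a limit $y\in\omega(\mathcal{B})$, and then invoke continuity of $\S(t)$ to conclude $\S(t)y=\lim_n\S(t_n)b_n=x$.

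Attraction I would prove by contradiction: were $\mathrm{dist}(\S(t)\mathcal{B},\omega(\mathcal{B}))\not\to 0$, there would be $\varepsilon>0$, $t_n\to\infty$, $b_n\in\mathcal{B}$ with $\mathrm{dist}(\S(t_n)b_n,\omega(\mathcal{B}))\geq\varepsilon$, yet a subsequence of $\{\S(t_n)b_n\}$ converges to a point of $\omega(\mathcal{B})$, a contradiction. Because $\mathcal{B}$ is absorbing, any bounded $D$ satisfies $\S(t_D)D\subseteq\mathcal{B}$, and the semigroup property transfers attraction from $\mathcal{B}$ to $D$; maximality follows since a bounded invariant $X$ obeys $\mathrm{dist}(X,\mathscr{A})=\mathrm{dist}(\S(t)X,\mathscr{A})\to 0$, forcing $X\subseteq\mathscr{A}$ by closedness. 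For connectedness, assuming $t\mapsto\S(t)\u_0$ is continuous and $\mathcal{B}$ is connected, I would argue by contradiction: if $\mathscr{A}=\mathscr{A}_1\cup\mathscr{A}_2$ with $\mathscr{A}_1,\mathscr{A}_2$ disjoint nonempty compacts at distance $d>0$, choose disjoint open sets $\mathcal{O}_i\supset\mathscr{A}_i$ with $\overline{\mathcal{O}_1}\cap\mathscr{A}_2=\emptyset$. For $T$ large, attraction gives $\bigcup_{t\geq T}\S(t)\mathcal{B}\subseteq\mathcal{O}_1\cup\mathcal{O}_2$; but this set is connected, because each orbit tail $\{\S(t)b:t\geq T\}$ is connected (continuous image of $[T,\infty)$) and meets the connected set $\S(T)\mathcal{B}$, so it lies in a single $\mathcal{O}_i$, forcing $\mathscr{A}=\omega(\mathcal{B})\subseteq\overline{\mathcal{O}_i}$ and contradicting the nonemptiness of the complementary piece.

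The main obstacle I anticipate is the backward-invariance inclusion $\omega(\mathcal{B})\subseteq\S(t)\omega(\mathcal{B})$: the semigroup flows only forward and need not be injective, so there is no genuine backward evolution, and it is precisely asymptotic compactness that rescues the argument by granting precompactness of the pulled-back sequence $\{\S(t_n-t)b_n\}$. A second point demanding care is the connectedness step, where one must resist assuming that $\bigcup_{t\geq T}\S(t)\mathcal{B}$ is precompact (it generally is not, as $\mathcal{B}$ itself need not be); the contradiction argument above sidesteps this by using only the connectedness of the tail and the attraction property, which is exactly where the continuity of $t\mapsto\S(t)\u_0$ and the connectedness of $\mathcal{B}$ enter.
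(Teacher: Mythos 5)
The paper does not prove this theorem: it is quoted verbatim as Theorem I.1.1 of Temam's book (together with the references to Ladyzhenskaya and Rosa) and is used as a black box to deduce Theorem \ref{thm4.3}. Your argument is correct and is precisely the classical proof found in those references: the sequential description of $\omega(\mathcal{B})$, nonemptiness and compactness from asymptotic compactness, forward invariance from continuity of $\S(t)$, backward invariance from pulling back along $\S(t_n-t)b_n$ and extracting a limit, attraction by contradiction and transfer to arbitrary bounded sets via the absorbing property, and connectedness via the connected union of orbit tails emanating from the connected set $\S(T)\mathcal{B}$. You also correctly flag the two genuinely delicate points (the inclusion $\omega(\mathcal{B})\subseteq\S(t)\omega(\mathcal{B})$ and the fact that $\bigcup_{t\geq T}\S(t)\mathcal{B}$ is connected but not precompact), so there is nothing to add.
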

	    Note that $\mathcal{B}_1$	is a bounded absorbing set  in $\H$ for the semigroup $\S(t)$ (see Proposition \ref{prop32}). Let us now show the asymptotic compactness of the semigroup $\{\S(t)\}_{t\geq 0}$ using the energy equation \eqref{3p5}. We say that the semigroup $\{\S(t)\}_{t\geq 0}$ is \emph{asymptotically compact} in a given metric space if $$\{\S(t_n)\u_n\} \ \text{ is precompact},$$ whenever  $$\{\u_n\}_n \ \text{ is bounded and }\ t_n\to\infty.$$ We follow the work \cite{RR} for getting asymptotic compactness of the semigroup $\{\S(t)\}_{t\geq 0}$. 
	    
	    Let us first define $\langle\!\langle\cdot,\cdot\rangle\!\rangle:\V\times\V\to\R$ by $$\langle\!\langle\u_1,\u_2\rangle\!\rangle=\mu[\u_1,\u_2]-\mu\frac{\lambda_1}{2}(\u_1,\u_2),\ \text{ for all } \ \u_1,\u_2\in\V,$$ where $(\cdot,\cdot)$ and $[\cdot,\cdot]$ denote the inner products in $\H$ and $\V$, respectively. Let us take $\u_1=\u_2=\u$ in the above expression to find 
	    \begin{align*}
	    \langle\!\langle\u\rangle\!\rangle^2=\langle\!\langle\u,\u\rangle\!\rangle=\mu\|\u\|_{\V}^2-\mu\frac{\lambda_1}{2}\|\u\|_{\H}^2\geq\mu\|\u\|_{\V}^2-\frac{\mu}{2}\|\u\|_{\V}^2=\frac{\mu}{2}\|\u\|_{\V}^2. 
	    \end{align*}
	    Moreover, we have 
	    \begin{align*}
	    \frac{\mu}{2}\|\u\|_{\V}^2\leq\langle\!\langle\u\rangle\!\rangle^2\leq\mu\|\u\|_{\V}^2, \ \text{ for all }\ \u\in\V,
	    \end{align*}
	    and hence $\langle\!\langle\cdot,\cdot\rangle\!\rangle$ defines an  inner product in $\V$ with norm $\langle\!\langle\cdot\rangle\!\rangle=\langle\!\langle\cdot,\cdot\rangle\!\rangle^{1/2}$, and is equivalent to $\|\cdot\|_{\V}$. 
	    \begin{proposition}\label{prop4.4}
	    	The semigroup $\{\S(t)\}_{t\geq 0}$ is asymptotically compact in $\H$. 
	    	\end{proposition}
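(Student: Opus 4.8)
The plan is to use the energy equation method (Ball's method), since in Poincar\'e domains the embedding $\V\subset\H$ is no longer compact and the compactness argument of Section \ref{sec3} is unavailable. Fix a sequence $t_n\to\infty$ and a bounded sequence $\{\u_n\}\subset\H$; I must show that $\{\S(t_n)\u_n\}$ is precompact in $\H$. Since $\mathcal{B}_1$ is absorbing (Proposition \ref{prop32}), there is a uniform entering time after which all trajectories starting in the given bounded set lie in $\mathcal{B}_1$, so for $n$ large $\w_n:=\S(t_n)\u_n$ is bounded in $\H$; passing to a subsequence, $\w_n\rightharpoonup\w$ weakly in $\H$. By weak lower semicontinuity of the norm, $\|\w\|_{\H}\leq\liminf_n\|\w_n\|_{\H}$, and since $\H$ is a Hilbert space it suffices to establish the reverse inequality $\limsup_n\|\w_n\|_{\H}\leq\|\w\|_{\H}$, which upgrades weak convergence to strong convergence and hence yields precompactness.

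The main tool is a reformulated energy equation. Using $\mu\|\u\|_{\V}^2=\langle\!\langle\u\rangle\!\rangle^2+\frac{\mu\uplambda_1}{2}\|\u\|_{\H}^2$ in the energy equality \eqref{3p5} and setting $\eta:=\mu\uplambda_1+2\alpha$, I multiply by $e^{\eta t}$ and integrate over $[0,T]$ to obtain, for every weak solution $\u(\cdot)$,
\begin{align*}
\|\u(T)\|_{\H}^2 &= e^{-\eta T}\|\u(0)\|_{\H}^2 \\
&\quad+\int_0^T e^{\eta(s-T)}\left[-2\langle\!\langle\u(s)\rangle\!\rangle^2-2\beta\|\u(s)\|_{\wi\L^{r+1}}^{r+1}+2\langle\f,\u(s)\rangle\right]\d s.
\end{align*}
I would then fix $T>0$, write $\w_n=\S(T)\y_n$ with $\y_n:=\S(t_n-T)\u_n\in\mathcal{B}_1$ for $n$ large, and pass (along a subsequence) to $\y_n\rightharpoonup\y$ in $\H$. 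Lemma \ref{lem4.1} gives $\u^n(\cdot):=\S(\cdot)\y_n\rightharpoonup\S(\cdot)\y=:\u(\cdot)$ in $\mathrm{L}^2(0,T;\V)$ and in $\mathrm{L}^{r+1}(0,T;\wi\L^{r+1})$, and the weak continuity at time $T$ forces $\w=\S(T)\y=\u(T)$.

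Applying the identity to $\u^n(\cdot)$ and taking $\limsup_n$, the forcing term converges because $e^{\eta(\cdot-T)}\f$ is a fixed element of $\mathrm{L}^2(0,T;\V')$, while the two dissipative terms carry a negative sign: the weighted quadratic functional $\u\mapsto\int_0^T e^{\eta(s-T)}\langle\!\langle\u(s)\rangle\!\rangle^2\d s$ and the convex functional $\u\mapsto\int_0^T e^{\eta(s-T)}\|\u(s)\|_{\wi\L^{r+1}}^{r+1}\d s$ are weakly lower semicontinuous on $\mathrm{L}^2(0,T;\V)$ and $\mathrm{L}^{r+1}(0,T;\wi\L^{r+1})$ respectively, so the $\limsup_n$ of each $-\int$ is dominated by the corresponding $-\int$ for the limit $\u$. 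Bounding $e^{-\eta T}\|\y_n\|_{\H}^2\leq e^{-\eta T}M_1^2$ and recognizing that the resulting right-hand side is precisely the energy identity for $\u$, I arrive at
\begin{align*}
\limsup_{n\to\infty}\|\w_n\|_{\H}^2\leq\|\u(T)\|_{\H}^2+e^{-\eta T}M_1^2=\|\w\|_{\H}^2+e^{-\eta T}M_1^2.
\end{align*}
A diagonal argument over a sequence $T_j\to\infty$ produces a single subsequence along which this holds for every $T_j$; letting $j\to\infty$ kills $e^{-\eta T_j}M_1^2$ and gives $\limsup_n\|\w_n\|_{\H}\leq\|\w\|_{\H}$, completing the proof.

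The hard part will be the careful bookkeeping of the nested/diagonal subsequences, so that one subsequence simultaneously realizes $\w_n\rightharpoonup\w$ and, for each $T_j$, $\S(t_n-T_j)\u_n\rightharpoonup\y^{(T_j)}$ with the consistency relation $\w=\S(T_j)\y^{(T_j)}$ guaranteed by the weak continuity of Lemma \ref{lem4.1}; this consistency is exactly what lets the energy identity for the limit reproduce $\|\w\|_{\H}^2$. The other delicate point is justifying the weak lower semicontinuity of the $\wi\L^{r+1}$-term and confirming it enters with the correct sign, which relies precisely on the extra convergence \eqref{3p36} established in Lemma \ref{lem4.1} for the absorption nonlinearity; the Darcy term $\alpha\u$ and the viscous term jointly furnish the exponential decay factor $e^{-\eta T}$ that makes the whole scheme close.
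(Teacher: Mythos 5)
Your proposal is correct and follows essentially the same route as the paper: the energy-equation (Ball/Rosa) method, with the reformulated identity in terms of $\langle\!\langle\cdot\rangle\!\rangle$, the consistency relation $\w=\S(T)\y^{(T)}$ supplied by the weak continuity of Lemma \ref{lem4.1}, weak lower semicontinuity of the weighted dissipative terms, and the $T\to\infty$ limit to kill the $e^{-\eta T}M_1^2$ remainder. The only (harmless) deviation is that you absorb the Darcy contribution into the exponent $\eta=\mu\uplambda_1+2\alpha$, whereas the paper keeps $-\alpha\|\u\|_{\H}^2$ inside the integral and handles it by the same lower-semicontinuity argument as the other dissipative terms.
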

    	\begin{proof}
    		Let us add and subtract $\mu\lambda_1\|\u\|_{\H}^2$ in \eqref{3p5} to find  
    		\begin{align}\label{41}
    		\frac{\d}{\d t}\|\u(t)\|_{\H}^2+\mu\lambda_1\|\u(t)\|_{\H}^2+2\langle\!\langle\u(t)\rangle\!\rangle^2+2\alpha\|\u(t)\|_{\H}^2+2\beta\|\u(t)\|_{\wi\L^{r+1}}^{r+1} =2\langle \f,\u(t)\rangle,
    		\end{align}
    		for any solution $\u(t)=\S(t)\u_0$, $\u_0\in\H$. Then, using the variation of constant formula, we obtain 
    		\begin{align}\label{42}
    		\|\u(t)\|_{\H}^2= \|\u_0\|_{\H}^2e^{-\mu\lambda_1 t}+2\int_0^te^{-\mu\lambda_1(t-s)}\left[\langle \f,\u(s)\rangle-\left(\langle\!\langle\u(s)\rangle\!\rangle^2+\alpha\|\u(t)\|_{\H}^2+\beta\|\u(s)\|_{\wi\L^{r+1}}^{r+1}\right)\right]\d s.
    		\end{align} 
    		Since $\u(t)=\S(t)\u_0$, one can rewrite \eqref{42} as 
    		\begin{align}\label{44}
    		&\|\S(t)\u_0\|_{\H}^2\nonumber\\&= \|\u_0\|_{\H}^2e^{-\mu\lambda_1 t}+2\int_0^te^{-\mu\lambda_1(t-s)}\left[\langle \f,\S(s)\u_0\rangle-\langle\!\langle\S(s)\u_0\rangle\!\rangle^2-\alpha\|\u(t)\|_{\H}^2-\beta\|\S(s)\u_0\|_{\wi\L^{r+1}}^{r+1}\right]\d s,
    		\end{align}
    		for all $\u_0\in\H$ and $t\geq 0$. 
    		
    		Let us now show the asymptotic compactness of the semigroup $\{\S(t)\}_{t\geq 0}$. Let $\B$ be a bounded subset of $\H$, and consider $\{\u_n\}_n\subset\B$ and $\{t_n\}_n$, $t_n\geq 0$, $t_n\to\infty$. Since $\mathcal{B}_1$ defined in \eqref{314b} is absorbing, there exists a time $t_{\B}>0$ such that $\S(t)\B\subset\mathcal{B}_1, \ \text{ for all }\ t\geq t_\B,$ so that for large enough $t_n$, say $t_n\geq t_{\B},$ $\S(t_n)\u_n\in\mathcal{B}_1.$ Hence, the sequence $\{\S(t_{n_k})\u_{n_k}\}_{n_k}$ is weakly precompact in $\H$. Thus, since $\mathcal{B}_1$ is closed and convex, we have  \begin{align}\label{4p5}\{\S(t_{n_k})\u_{n_k}\}_{n_k}\xrightharpoonup{w}\W\ \text{  in } \ \H,\end{align} for some subsequence $\{\S(t_{n_k})\u_{n_k}\}_{n_k}$ of $\{\S(t_{n_k})\u_n\}_n$ and $\W\in\mathcal{B}_1$. Similarly, for each $T>0$, one can show that $\S(t_n-T)\u_n\in\mathcal{B}_1,$ for all $t_n\geq T+t_{\B}$. Thus, we obtain $\S(t_n-T)\u_n$ is precompact in $\H$, and by using a diagonal argument and passing to a further subsequence (if necessary), we can assume that $$\{\S(t_{n_k}-T)\u_{n_k}\}_{n_k}\xrightharpoonup{w}\W_T, \ \text{ in } \ \H,$$ for all $T\in\mathbb{N}$ with $\W_T\in\mathcal{B}_1$. From \eqref{4p5}, we know that $(\S(t_{n_k})\u_{n_k},\psi)\to(\W,\psi)$, for all $\psi\in\H$. Using the weak continuity of $\S(t)$ established in Lemma \ref{lem4.1} (see \eqref{3.34}),  we have 
    		\begin{align*}
    		\W&=\lim_{k\to\infty}(\S(t_{n_k})\u_{n_k},\v)=\lim_{k\to\infty}(\S(T)\S(t_{n_k}-T)\u_{n_k},\v)\nonumber\\&=(\S(T)\lim_{k\to\infty}\S(t_{n_k}-T)\u_{n_k},\v)=\S(T)\W_T.
    		\end{align*}
    		Hence, we get $\W=\S(T)\W_T,$ for all $T\in\N$. Since $\{\S(t_{n_k})\u_{n_k}\}_{n_k}\xrightharpoonup{w}\W$ weakly in $\H$ and using the weakly lower-semicontinuity property of $\H$, we also have \begin{align}\label{4p6}\|\W\|_{\H}\leq \liminf\limits_{k\to\infty}\|\S(t_{n_k})\u_{n_k}\|_{\H}.\end{align} Our next aim is to show that \begin{align}\label{45}\limsup\limits_{k\to\infty}\|\S(t_{n_k})\u_{n_k}\|_{\H}\leq \|\W\|_{\H}.\end{align} For $T\in\N$ and $t_n>T$, from \eqref{44}, we have 
    		\begin{align}\label{4.5}
    	\|\S(t_n)\u_n\|_{\H}^2&=\|\S(T)\S(t_n-T)\u_n\|_{\H}^2\nonumber\\&=\|\S(t_n-T)\u_n\|_{\H}^2e^{-\mu\lambda_1T} +2\int_0^Te^{-\mu\lambda_1(T-s)}\Big[\langle \f,\S(s)\S(t_n-T)\u_n\rangle\nonumber\\&\qquad-\langle\!\langle\S(s)\S(t_n-T)\u_n\rangle\!\rangle^2-\alpha\|\S(s)\S(t_n-T)\u_n\|_{\H}^2-\beta\|\S(s)\S(t_n-T)\u_n\|_{\wi\L^{r+1}}^{r+1}\Big]\d s.
    		\end{align}
    		Since $\S(t_n-T)\u_n\in\mathcal{B}_1$, we can easily get 
    		\begin{align}\label{4.6}
    		\limsup_{k\to\infty}e^{-\mu\lambda_1T}\|\S(t_n-T)\u_n\|_{\H}^2\leq M_1^2e^{-\mu\lambda_1T}.
    		\end{align}
    		Using the weak continuity result given in \eqref{3.35}  and the convergence  $\{\S(t_{n_k}-T)\u_{n_k}\}_{n_k}\xrightharpoonup{w}\W_T$ in $\H$, we have 
    		\begin{align}\label{4.7}
    		\S(\cdot)\S(t_{n_k}-T)\u_{n_k}\xrightharpoonup{w}\S(\cdot)\W_T \ \text{ weakly in } \ \mathrm{L}^2(0,T;\V)\text{ and }\\
    		\S(\cdot)\S(t_{n_k}-T)\u_{n_k}\xrightharpoonup{w}\S(\cdot)\W_T \ \text{ weakly in } \ \mathrm{L}^{r+1}(0,T;\wi\L^{r+1}). \label{4.8}
    		\end{align}
    		Now, we consider
    		\begin{align}
    		\int_0^T\|e^{-\mu\lambda_1(T-s)}\f\|_{\V'}^2\d s =\int_0^Te^{-2\mu\lambda_1(T-s)}\|\f\|_{\V'}^2\d s=\|\f\|_{\V'}^2\left(\frac{1-e^{-2\mu\lambda_1T}}{2\mu\lambda_1}\right)\leq\frac{\|\f\|_{\V'}^2}{2\mu\lambda_1}<+\infty,
    		\end{align}
    		and hence the  mapping $s\mapsto e^{-\mu\lambda_1(T-s)}\f\in\mathrm{L}^2(0,T;\V')$. Thus, we find 
    		\begin{align}
    		\lim\limits_{k\to\infty}\int_0^Te^{-\mu\lambda_1(T-s)}\langle \f,\S(s)\S(t_{n_k}-T)\u_{n_k}\rangle\d s = \int_0^Te^{-\mu\lambda_1(T-s)}\langle \f,\S(s)\W_T\rangle\d s.
    		\end{align}
    		Furthermore, since $\langle\!\langle\cdot\rangle\!\rangle$ defines a norm on $\V$ equivalent to the norm $\|\cdot\|_{\V}$ and $0<e^{-\mu\lambda_1T}\leq e^{-\mu\lambda_1(T-s)}\leq 1,$ for all $s\in[0,T]$, one can easily see that $\left(\int_0^Te^{-\mu\lambda_1(T-s)}\langle\!\langle\cdot\rangle\!\rangle^2\d s\right)^{1/2}$ defines a norm on $\mathrm{L}^2(0,T;\V)$ equivalent to the norm $\left(\int_0^T\|\cdot\|_{\V}^2\d s\right)^{1/2}$. Using \eqref{4.7} and the weakly lower-semicontinuity property of the norm, we get 
    		\begin{align*}
    		\int_0^Te^{-\mu\lambda_1(T-s)}\langle\!\langle\S(s)\W_T\rangle\!\rangle^2\d s\leq\liminf\limits_{k\to\infty}\int_0^Te^{-\mu\lambda_1(T-s)}\langle\!\langle\S(s)\S(t_{n_k}-T)\u_{n_k}\rangle\!\rangle^2\d s.
    		\end{align*}
    		Thus, we have 
    		\begin{align}\label{411}
    		\limsup\limits_{k\to\infty}&\left[-2\int_0^Te^{-\mu\lambda_1(T-s)}(\!(\!(\S(s)\S(t_{n_k}-T)\u_{n_k})\!)\!)^2\d s\right]\nonumber\\&=-2\liminf\limits_{k\to\infty}\left[\int_0^Te^{-\mu\lambda_1(T-s)}(\!(\!(\S(s)\S(t_{n_k}-T)\u_{n_k})\!)\!)^2\d s\right]\nonumber\\&\leq-2\int_0^Te^{-\mu\lambda_1(T-s)}(\!(\!(\S(s)\W_T)\!)\!)^2\d s.
    		\end{align}
    		Note  that $0<e^{-\mu\lambda_1T}\leq e^{-\mu\lambda_1(T-s)}\leq 1,$ for all $s\in[0,T]$, and thus it is immediate that $\left(\int_0^Te^{-\mu\lambda_1(T-s)}\|\cdot\|_{\H}^{2}\d s\right)^{1/2}$ defines a norm on $\mathrm{L}^{2}(0,T;\H)$, which is equivalent to the standard norm. Using \eqref{4.8} and the weakly lower-semicontinuity property of the norm, we get 
    		\begin{align}
    		\int_0^Te^{-\mu\lambda_1(T-s)}\|\S(s)\W_T\|_{\H}^{2}\d s\leq\liminf\limits_{k\to\infty}\int_0^Te^{-\mu\lambda_1(T-s)}\|\S(s)\S(t_{n_k}-T)\u_{n_k}\|_{\H}^{2}\d s.
    		\end{align}
    		Once again using the fact that $0<e^{-\mu\lambda_1T}\leq e^{-\mu\lambda_1(T-s)}\leq 1,$ for all $s\in[0,T]$, one can easily see that $\left(\int_0^Te^{-\mu\lambda_1(T-s)}\|\cdot\|_{\wi\L^{r+1}}^{r+1}\d s\right)^{\frac{1}{r+1}}$ defines a norm on $\mathrm{L}^{r+1}(0,T;\wi\L^{r+1})$ equivalent to the norm $\left(\int_0^T\|\cdot\|_{\wi\L^{r+1}}^2\d s\right)^{\frac{1}{r+1}}$. Using \eqref{4.8} and the weakly lower-semicontinuity property of the norm, we get 
    		\begin{align}\label{4.11}
    		\int_0^Te^{-\mu\lambda_1(T-s)}\|\S(s)\W_T\|_{\wi\L^{r+1}}^{r+1}\d s\leq\liminf\limits_{k\to\infty}\int_0^Te^{-\mu\lambda_1(T-s)}\|\S(s)\S(t_{n_k}-T)\u_{n_k}\|_{\wi\L^{r+1}}^{r+1}\d s.
    		\end{align}
    		Using \eqref{4.6}-\eqref{4.11} in  \eqref{4.5} and then taking $\limsup$ in \eqref{4.5}, we obtain 
    		\begin{align}\label{412}
    	&	\limsup\limits_{k\to\infty}	\|\S(t_{n_k})\u_{n_k}\|_{\H}^2\nonumber\\&\leq {M_1^2}e^{-\mu\lambda_1T}+2\int_0^Te^{-\mu\lambda_1(T-s)}\left[\langle \f,\S(s)\W_T\rangle-\langle\!\langle\S(s)\W_T\rangle\!\rangle^2-\alpha\|\S(s)\W_T\|_{\H}^2-\beta\|\S(s)\W_T\|_{\wi\L^{r+1}}^{r+1}\right]\d s.
    		\end{align}
    		Let us apply  $\W=\S(T)\W_T$ in \eqref{44} to deduce that 
    		\begin{align}\label{413}
    		\|\W\|_{\H}^2&=\|\S(T)\W_T\|_{\H}^2=e^{-\mu\lambda_1 T}\|\W_T\|_{\H}^2\nonumber\\&\quad+2\int_0^Te^{-\mu\lambda_1(T-s)}\left[\langle \f,\S(s)\W_T\rangle-\langle\!\langle\S(s)\W_T\rangle\!\rangle^2-\alpha\|\S(s)\W_T\|_{\H}^2-\beta\|\S(s)\W_T\|_{\wi\L^{r+1}}^{r+1}\right]\d s.
    		\end{align}
    		Combining \eqref{412} and \eqref{413}, we infer that 
    		\begin{align}\label{414}
    			\limsup\limits_{k\to\infty}\|\S(t_{n_k})\u_{n_k}\|_{\H}^2&\leq\|\W\|_{\H}^2+\left(M_1^2-\|\W_T\|_{\H}^2\right)e^{-\mu\lambda_1T}\leq\|\W\|_{\H}^2+M_1^2e^{-\mu\lambda_1 T},
    		\end{align}
    		for all $T\in\N$. Let us take $T\to\infty$ in \eqref{414} to find 
    		\begin{align}\label{417}
    		\limsup\limits_{k\to\infty}\|\S(t_{n_k})\u_{n_k}\|_{\H}^2\leq\|\W\|_{\H}^2,
    		\end{align}
    		and \eqref{45} follows. Since $\H$ is a Hilbert space, \eqref{4p6}-\eqref{45} along with \eqref{4p5} imply 
    		\begin{align}\label{419}\{\S(t_{n_k})\u_{n_k}\}_{n_k}\to\W\ \text{ strongly in } \ \H,\end{align}
    		as $k\to\infty$. This establishes that $\{\S(t_{n})\u_{n}\}_{n}$ is precompact in $\H$ and hence that $\{\S(t)\}_{t\geq 0}$ is asymptotically	compact in $\H$. 
    	\end{proof}
	    Using the asymptotic compactness of the semigroup $\{\S(t)\}_{t\geq 0}$ and Theorem \ref{thm4.2}, we have the following result:
	    \begin{theorem}\label{thm4.3}
	    	Let $\Omega$ be an open Poincar\'e domain. Assume that $\mu>0$ and $\f\in\V'$. Then the semigroup $\{\S(t)\}_{t\geq 0}$ associated with the 2D CBF system \eqref{kvf} possesses a global attractor $\mathscr{A}_{\mathrm{glob}}$ in $\H$, that is, a compact invariant set  in $\H$, which attracts all bounded sets 	in $\H$. Moreover, $\mathscr{A}_{\mathrm{glob}}$ is connected in $\H$ and is maximal for the inclusion relation among all the functional invariant sets bounded in $\H$. 
	    \end{theorem}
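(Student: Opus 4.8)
The plan is to deduce the result directly from the abstract criterion Theorem \ref{thm4.2}, since every hypothesis it requires has essentially been prepared in the preceding results. First I would observe that $\mathscr{E}=\H$ is a complete metric space, being a Hilbert space, and that $\{\S(t)\}_{t\geq 0}$ is a genuine semigroup of continuous operators on $\H$. The semigroup property is a consequence of the existence and uniqueness of weak solutions (Theorem \ref{weake}), while the continuity of each map $\S(t):\H\to\H$ is exactly the Lipschitz continuity on bounded subsets of $\H$ established in Lemma \ref{lem3.1}.

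Next I would invoke the two substantive ingredients. Proposition \ref{prop32} provides the bounded absorbing set $\mathcal{B}_1$ in $\H$, and Proposition \ref{prop4.4} establishes that $\{\S(t)\}_{t\geq 0}$ is asymptotically compact in $\H$. With an absorbing set bounded in $\H$ together with asymptotic compactness, Theorem \ref{thm4.2} immediately yields a compact global attractor $\mathscr{A}=\omega(\mathcal{B}_1)$ which is invariant under the semigroup and attracts every bounded subset of $\H$, as required.

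For the connectedness and maximality assertions I would appeal to the second part of Theorem \ref{thm4.2}. The trajectory map $t\mapsto\S(t)\u_0$ is continuous from $\R^+$ into $\H$ because every weak solution of \eqref{kvf} belongs to $\C([0,T];\H)$ (see Remark \ref{rem5.10}); moreover the absorbing ball $\mathcal{B}_1$ defined in \eqref{314b} is convex and hence connected. These two facts, inserted into Theorem \ref{thm4.2}, guarantee that $\mathscr{A}$ is connected in $\H$, and the maximality of $\mathscr{A}$ among all functional invariant sets bounded in $\H$ follows from the standard characterization of the global attractor as the maximal bounded invariant set. Since all the analytic work—the construction of the absorbing ball in $\H$ and, above all, the energy-equality and weak-continuity argument (Lemma \ref{lem4.1}) behind asymptotic compactness—has already been carried out, the only remaining task is this verification of hypotheses, and I would not expect any genuine obstacle at this stage.
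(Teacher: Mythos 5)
Your proposal is correct and follows exactly the route the paper takes: the paper derives Theorem \ref{thm4.3} by combining the bounded absorbing set of Proposition \ref{prop32} with the asymptotic compactness established in Proposition \ref{prop4.4} and then invoking the abstract criterion of Theorem \ref{thm4.2}. Your additional remarks on continuity of $\S(t)$, continuity of trajectories, and convexity of $\mathcal{B}_1$ simply make explicit the hypothesis-checking that the paper leaves implicit.
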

    \begin{remark}
    For $\f\in\V'$, as we have proved in Remark \ref{rem3.7}, one can show that the global attractor $\mathscr{A}_{\mathrm{glob}}$ obtained in  Theorem \ref{thm4.3} is  included and bounded in $\V$. 
    \end{remark}

	    \section{Dimension of the Attractor}\label{sec5}\setcounter{equation}{0} In this section, we analyze the dimension of the global attractor $\mathscr{A}_{\mathrm{glob}}$ obtained in section \ref{sec4} (Poincar\'e domains). We estimate the bounds for the Hausdorff as well as fractal dimensions of the global attractor  $\mathscr{A}_{\mathrm{glob}}$.  We are able to prove the Fr\'echet
	    differentiability of the map $\S(t) : \H \to \H$ with respect to the initial data for $r=1,2,3$ only.  Due to this technical difficulty, we consider the cases $r=1,2$ and $3$ only in this section. 
	    
	    Let $\u_0\in\H$ and $\u(t)=\S(t)\u_0$, for $t\geq 0$ be the unique weak solution of the system \eqref{kvf}. The linearized flow around $\u(\cdot)$ is given by the following equation: 
	  	\begin{equation}\label{5.1}
	  \left\{
	  \begin{aligned}
	  \frac{\d}{\d t}\xi(t)+\mu\A\xi(t)+\B(\xi(t),\u(t))+\B(\u(t),\xi(t))+\alpha\xi(t)+\beta\mathcal{C}'(\u(t))\xi(t)&=\mathbf{0},  \\  
	  \xi(0)&=\xi_0,
	  \end{aligned}
	  \right.
	  \end{equation}
	  for a.e. $t\in(0,T)$, where $\mathcal{C}'(\cdot)$ is defined in \eqref{29}. 
	As in the case of nonlinear problem, one can show that there exists a unique solution $\xi\in\mathrm{L}^{\infty}(0,T;\H)\cap\mathrm{L}^2(0,T;\V)\cap\mathrm{L}^{r+1}(0,T;\wi\L^{r+1}),$ with $\xi_t\in\mathrm{L}^{\frac{r+1}{r}}(0,T;\V')$ for all $T>0$ satisfying the energy equality:
	\begin{align}\label{52}
&	\|\xi(t)\|_{\H}^2+2\mu\int_0^t\|\nabla\xi(s)\|_{\H}^2\d s+2\alpha\int_0^t\|\xi(s)\|_{\H}^2\d s+2\beta\int_0^t\||\u(s)|^{\frac{r-1}{2}}\xi(s)\|_{\H}^2\d s\nonumber\\&\quad+2(r-1)\beta\int_0^t\||\u(s)|^{\frac{r-3}{2}}(\u(s)\cdot\xi(s))\|_{\H}^2\d s\nonumber\\&=\|\xi_0\|_{\H}^2-2\int_0^t\langle\B(\xi(s),\u(s)),\xi(s)\rangle\d s,
	\end{align}
	for all $t\in[0,T]$ and $r\geq 3$. For $r=1,2$, one has to make proper changes in the final term appearing in the left hand side of the equality \eqref{52}, according to the definition given in \eqref{29}. 
Furthermore, using Remark \ref{rem5.10}, we infer that $\xi\in\C([0,T];\H),$ for all $T>0$. We define a map $\Lambda(t;\u_0):\H\to\H$ by setting $\Lambda(t;\u_0)\xi_0=\xi(t)$. In the next Lemma, we show that the map $\Lambda(t;\u_0)$ is bounded and the semigroup $\{\S(t)\}_{t\geq 0}$ is uniformly differentiable on $\mathscr{A}_{\mathrm{glob}}$, that is, 
	  \begin{align}\label{5.2}
	  \lim\limits_{\e\to 0}\sup\limits_{\substack{\u_0,\v_0\in\mathscr{A}_{\mathrm{glob}}\\ 0<\|\u_0-\v_0\|_{\H}\leq \e}}\frac{\|\S(t)\u_0-\S(t)\v_0-\Lambda(t;\u_0)(\v_0-\u_0)\|_{\H}}{\|\v_0-\u_0\|_{\H}}=0.
	  \end{align}
	  \begin{theorem}\label{thm5.1}
	  	Let $\u_0$ and $\v_0$ be two members of $\H$. Then for $r=1,2,3,$ there exists a constant $K=K(\|\u_0\|_{\H},\|\v_0\|_{\H})$ such that 
	  	\begin{align}\label{53}
	  	\|\S(t)\v_0-\S(t)\u_0-\Lambda(t;\u_0)(\v_0-\u_0)\|_{\H}\leq K\|\v_0-\u_0\|_{\H}
	  	,	\end{align}
	  	where the linear operator $\Lambda(t;\u_0):\H\to\H$, for $t>0$ is the solution operator of the problem \eqref{5.1} with $\u(t)=\S(t)\u_0$. Or in other words,  for every $t > 0$, the map $\S(t)\u_0$, as a map $\S(t) : \H \to \H$ is Fr\'echet
	  	differentiable with respect to the initial data, and its Fr\'echet derivative $\D_{\u_0}
	  	(\S(t)\u_0)\xi_0 =\Lambda(t;\u_0)\xi_0$. Moreover, \eqref{5.2} is satisfied. 
	  \end{theorem}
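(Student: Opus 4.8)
The plan is to derive a differential equation for the remainder and close an energy estimate showing that it is \emph{quadratically} small in $\|\v_0-\u_0\|_{\H}$, which is stronger than \eqref{53} and directly yields \eqref{5.2}. Write $\u(t)=\S(t)\u_0$, $\v(t)=\S(t)\v_0$, $\w(t)=\v(t)-\u(t)$ and $\xi(t)=\Lambda(t;\u_0)(\v_0-\u_0)$, and set $\z=\w-\xi$, so that $\z(0)=\mathbf{0}$. Subtracting the two copies of \eqref{kvf}, using the bilinearity identity $\B(\v)-\B(\u)=\B(\w,\u)+\B(\u,\w)+\B(\w,\w)$, and then subtracting the linearised system \eqref{5.1}, the linear convection terms combine as $\B(\w,\u)-\B(\xi,\u)=\B(\z,\u)$ and $\B(\u,\w)-\B(\u,\xi)=\B(\u,\z)$, and I obtain
\[
\z_t+\mu\A\z+\B(\z,\u)+\B(\u,\z)+\alpha\z+\beta\mathcal{C}'(\u)\z=-\B(\w,\w)-\beta\big[\mathcal{C}(\v)-\mathcal{C}(\u)-\mathcal{C}'(\u)\w\big].
\]
The forcing on the right is purely quadratic in $\w$: the Navier--Stokes self-interaction $\B(\w,\w)$ and the second-order Taylor remainder of the Forchheimer term.

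I would then pair this identity with $\z$ in $\H$. By \eqref{b0} the term $\langle\B(\u,\z),\z\rangle=b(\u,\z,\z)$ vanishes; the operator $\mathcal{C}'(\u)$ is non-negative (being the linearisation at $\u$ of the monotone map $\mathcal{C}$, cf. \eqref{2.8}), so $\langle\mathcal{C}'(\u)\z,\z\rangle\ge0$ and may be dropped; and $\langle\B(\z,\u),\z\rangle=b(\z,\u,\z)$ is estimated exactly as in \eqref{3.10} by $\tfrac{\mu}{4}\|\z\|_{\V}^2+\tfrac{C}{\mu}\|\u\|_{\V}^2\|\z\|_{\H}^2$. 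For the first forcing term, $|\langle\B(\w,\w),\z\rangle|=|b(\w,\z,\w)|\le\|\w\|_{\wi\L^4}^2\|\z\|_{\V}$ is absorbed by Young's inequality into $\tfrac{\mu}{4}\|\z\|_{\V}^2+\tfrac{1}{\mu}\|\w\|_{\wi\L^4}^4$, and Ladyzhenskaya's inequality gives $\|\w\|_{\wi\L^4}^4\le2\|\w\|_{\H}^2\|\w\|_{\V}^2$. A Gronwall argument on $[0,t]$ with $\z(0)=\mathbf{0}$ then bounds $\|\z(t)\|_{\H}^2$ by the time integral of the quadratic forcing times $\exp\big(\tfrac{C}{\mu}\int_0^t\|\u\|_{\V}^2\d s\big)$. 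Feeding in the Lipschitz bound \eqref{313a} (so $\sup_{s\le t}\|\w(s)\|_{\H}\lesssim\|\v_0-\u_0\|_{\H}$) and the dissipation estimate $\int_0^t\|\w\|_{\V}^2\d s\lesssim\|\v_0-\u_0\|_{\H}^2$ coming from \eqref{311}, together with the a priori bound \eqref{3p8} for $\int_0^t\|\u\|_{\V}^2\d s$, I get $\int_0^t\|\w\|_{\wi\L^4}^4\d s\lesssim\|\v_0-\u_0\|_{\H}^4$ and hence $\|\z(t)\|_{\H}\le K\big(\|\u_0\|_{\H},\|\v_0\|_{\H}\big)\|\v_0-\u_0\|_{\H}^2$. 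This quadratic bound implies \eqref{53} at once and, dividing by $\|\v_0-\u_0\|_{\H}$ and using that $\mathscr{A}$ is bounded, gives the uniform limit \eqref{5.2}; taking $\xi\equiv\mathbf{0}$ in the same computation identifies $\Lambda(t;\u_0)$ with the Fréchet derivative $\D_{\u_0}(\S(t)\u_0)$.

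The crux, and the reason for restricting to $r=1,2,3$, is the Forchheimer Taylor remainder $\mathcal{C}(\v)-\mathcal{C}(\u)-\mathcal{C}'(\u)\w$. For $r=1$ the map $\mathcal{C}(\u)=\u$ is linear and this term vanishes identically. For $r=2,3$ the density $|\u|^{r-1}\u$ is a low-degree polynomial, so the remainder is a bona fide second difference controlled pointwise by $C\big(|\u|^{r-2}+|\w|^{r-2}\big)|\w|^2$; I would bound its pairing with $\z$ by Hölder together with the Gagliardo--Nirenberg and Agmon inequalities \eqref{Gen_lady}--\eqref{24}, the exponents closing in two dimensions precisely up to $r=3$ (for larger $r$ the requisite embeddings of $\V$ into $\wi\L^p$ degrade and the remainder can no longer be absorbed). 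Carrying out these estimates shows that this term contributes at most $C\|\v_0-\u_0\|_{\H}^4$ to the Gronwall bracket, so it does not spoil the quadratic rate. The only genuine analytical difficulty lies in these pointwise/interpolation bounds for the Forchheimer remainder; the convective and linear terms are handled by the same two-dimensional estimates already used in Lemma \ref{lem3.1}.
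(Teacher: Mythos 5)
Your proposal follows essentially the same route as the paper's proof: form the remainder $\z=\w-\xi$, derive its evolution equation with the quadratic forcings $\B(\w,\w)$ and the Forchheimer Taylor remainder, test with $\z$ in $\H$, drop the non-negative $\langle\mathcal{C}'(\u)\z,\z\rangle$ term, absorb the trilinear terms exactly as in Lemma \ref{lem3.1}, treat $r=1,2,3$ case by case via Taylor's formula with H\"older/Ladyzhenskaya bounds, and close with the Lipschitz and dissipation estimates \eqref{311}--\eqref{518} plus Gronwall to obtain the quadratic bound $\|\z(t)\|_{\H}\le K\|\v_0-\u_0\|_{\H}^2$, which yields \eqref{53} and \eqref{5.2}. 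This is the paper's argument (compare \eqref{5.3}--\eqref{620a}), so no further comparison is needed.
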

	  \begin{proof}
	  Let us define $$\eta(t):=\u(t)-\v(t)-\xi(t)=\S(t)(\u_0-\v_0)-\xi(t).$$  Then $\eta(t)$ satisfies: 
	  	\begin{equation}\label{5.3}
	  	\left\{
	  	\begin{aligned}
	  	\frac{\d}{\d t}\eta(t)+\mu\A\eta(t)+\B(\eta(t),\u(t))+\B(\u(t),\eta(t))&-\B(\w(t),\w(t))+\alpha\eta(t)\\+\beta[\mathcal{C}(\u(t))-\mathcal{C}(\v(t))-\mathcal{C}'(\u(t))\xi(t)]&=\mathbf{0}, \\
	  	\eta(0)&=\mathbf{0},
	  	\end{aligned}
	  	\right.
	  	\end{equation}
	  for a.e. $t\in(0,T)$ in $\V'$,	where $\w(t)=\u(t)-\v(t)$. Let us take the inner product with $\eta(\cdot)$ to the first equation in \eqref{5.3} to obtain 
	  	\begin{align}\label{5.4}
	  	&\frac{1}{2}\frac{\d }{\d t}\|\eta(t)\|_{\H}^2+\mu\|\eta(t)\|_{\V}^2+\alpha\|\eta(t)\|_{\H}^2\nonumber \\&= -b(\eta(t),\u(t),\eta(t))+b(\w(t),\w(t),\eta(t))-\beta\langle\mathcal{C}(\u(t))-\mathcal{C}(\v(t))-\mathcal{C}'(\u(t))\xi(t),\eta(t)\rangle.
	  	\end{align}
	  	We estimate $|b(\eta,\u,\eta)|$ using H\"older's, Ladyzhenskaya's and Young's inequalities as 
	  	\begin{align*}
	  	|b(\eta,\u,\eta)|&\leq \|\u\|_{\V}\|\eta\|_{\L^4}^2\leq \sqrt{2}\|\u\|_{\V}\|\eta\|_{\H}\|\eta\|_{\V}\leq \frac{\mu}{4}\|\eta\|_{\V}^2+\frac{2\|\u\|_{\V}^2}{\mu}\|\eta\|_{\H}^2,\\
	  	|b(\w,\w,\eta)|&=|b(\w,\eta,\w)|\leq \|\eta\|_{\V}\|\w\|_{\L^4}^2\leq\sqrt{2}\|\eta\|_{\V}\|\w\|_{\H}\|\w\|_{\V}\leq\frac{\mu}{4}\|\eta\|_{\V}^2+\frac{2}{\mu}\|\w\|_{\H}^2\|\w\|_{\V}^2.
	  	\end{align*}
	  	In order  to estimate the term $-\beta\langle\mathcal{C}(\u)-\mathcal{C}(\v)-\mathcal{C}'(\u)\xi,\eta\rangle$, we consider the cases $r=1$, $r=2$ and $r= 3$ separately. For $r=1$, its can be easily seen that 
	  	\begin{align*}
	  	-\beta\langle\mathcal{C}(\u)-\mathcal{C}(\v)-\mathcal{C}'(\u)\xi,\eta\rangle=-\beta\|\eta\|_{\H}^2. 
	  	\end{align*}
	  	It should be noted that 
	  	\begin{align*}
	&  	\left<\frac{\u_1}{|\u_1|}(\u_1\cdot\v)-\frac{\u_2}{|\u_2|}(\u_2\cdot\v),\w\right>\nonumber\\&=\left<\frac{\u_1}{|\u_1|}((\u_1-\u_2)\cdot\v),\w\right>+\left<\left(\frac{\u_1}{|\u_1|}-\frac{\u_2}{|\u_2|}\right)(\u_2\cdot\v),\w\right>\nonumber\\&= \left<\frac{\u_1}{|\u_1|}((\u_1-\u_2)\cdot\v),\w\right>+\left<\frac{\u_1(|\u_1|-|\u_2|)+(\u_1-\u_2)|\u_1|}{|\u_1||\u_2|}(\u_2\cdot\v),\w\right>\nonumber\\&\leq 2\langle|\u_1-\u_2||\v|,|\w|\rangle,
	  	\end{align*}
for all $\u_1\neq \mathbf{0},\u_2\neq\mathbf{0},\v,\w\in\wi\L^{3}$ (one can also obtain same estimates for $\u_1=\mathbf{0}$ or $\u_2=\mathbf{0}$).	  	For $r=2$, using \eqref{2.9}, Taylor's formula (see Theorem 7.9.1, \cite{PGC}), H\"older's, Ladyzhenskaya's and Young's inequalities, we obtain 
	  	\begin{align*}
	  	-&\beta\langle\mathcal{C}(\u)-\mathcal{C}(\v)-\mathcal{C}'(\u)\xi,\eta\rangle\nonumber\\&=-\beta\left<\int_0^1\mathcal{C}'(\theta\u+(1-\theta)\v)\d\theta(\u-\v)-\mathcal{C}'(\u)\xi,\eta\right>\nonumber\\&=-\beta\langle\mathcal{C}'(\u)\eta,\eta\rangle+\beta\left<\int_0^1[\mathcal{C}'(\u)-\mathcal{C}'(\theta\u+(1-\theta)\v)](\u-\v)\d\theta,\eta\right>\nonumber\\&=-\beta\langle\mathcal{C}'(\u)\eta,\eta\rangle+\beta\int_0^1\langle[|\u|-|\theta\u+(1-\theta)\v|](\u-\v),\eta\rangle\d\theta\nonumber\\&\quad+\beta\int_0^1\left<\left[\frac{\u}{|\u|}(\u\cdot(\u-\v))\right]-\left[\frac{\theta\u+(1-\theta\v)}{|\theta\u+(1-\theta\v)|}((\theta\u+(1-\theta\v))\cdot(\u-\v))\right],\eta\right>\d\theta\nonumber\\&\leq 3\beta\int_0^1(1-\theta)\langle|\u-\v|^2,|\eta|\rangle\d\theta\leq \frac{3\beta}{2}\|\u-\v\|_{\wi\L^4}^2\|\eta\|_{\H}\nonumber\\&\leq\frac{\mu}{2}\|\eta\|_{\H}^2+\frac{9\beta^2}{8\mu}\|\w\|_{\H}^2\|\w\|_{\V}^2.
	  	\end{align*}
	  	For the case $r=3$, once again using \eqref{2.9}, Taylor's formula, H\"older's, Ladyzhenskaya's and Young's inequalities, we find 
	  	\begin{align*}
	  -&\beta\langle\mathcal{C}(\u)-\mathcal{C}(\v)-\mathcal{C}'(\u)\xi,\eta\rangle\nonumber\\&=-\beta\left<\mathcal{C}'(\u)(\u-\v)+\frac{1}{2}\int_0^1\mathcal{C}''(\theta\u+(1-\theta)\v)\d\theta(\u-\v)\otimes(\u-\v)-\mathcal{C}'(\u)\xi,\eta\right>\nonumber\\&=-\beta\langle\mathcal{C}'(\u)\eta,\eta\rangle+3\beta\int_0^1\langle[(\theta\u+(1-\theta)\v)\cdot(\u-\v)](\u-\v),\eta\rangle\d\theta\nonumber\\&\quad +\frac{3\beta}{2}\int_0^1\langle|\u-\v|^2(\theta\u+(1-\theta)\v),\eta\rangle\d\theta\nonumber\\&\leq \frac{9\beta}{2}\int_0^1\|\theta\u+(1-\theta)\v\|_{\wi\L^{4}}\|\u-\v\|_{\wi\L^{4}}^{2}\|\eta\|_{\wi\L^{4}}\d\theta\nonumber\\&\leq \frac{9\beta}{2^{1/4}}\left(\|\u\|_{\wi\L^{4}}+\|\v\|_{\wi\L^{4}}\right)\|\w\|_{\H}\|\w\|_{\V}\|\eta\|_{\H}^{1/2}\|\eta\|_{\V}^{1/2}\nonumber\\&\leq \frac{\mu}{4}\|\eta\|_{\V}^2+\frac{3}{4\mu^{1/3}}\left(\frac{9\beta}{2^{1/4}}\right)^{4/3}\left(\|\u\|_{\wi\L^4}+\|\v\|_{\wi\L^4}\right)^{4/3}\|\w\|_{\H}^{4/3}\|\w\|_{\V}^{4/3}\|\eta\|_{\H}^{2/3}\nonumber\\&\leq \frac{\mu}{4}\|\eta\|_{\V}^2+\frac{\mu}{16}\left(\|\u\|_{\wi\L^4}+\|\v\|_{\wi\L^4}\right)^4\|\eta\|_{\H}^2+\frac{27\beta^2}{\mu}\|\w\|_{\H}^2\|\w\|_{\V}^2.
	  	\end{align*}
	  	Applying the above estimates  in \eqref{5.4}, we find
	  	\begin{align}\label{5.7}
	  	&\frac{\d }{\d t}\|\eta(t)\|_{\H}^2+\frac{\mu}{2}\|\eta(t)\|_{\V}^2+2\alpha\|\eta(t)\|_{\H}^2\nonumber\\&\leq\left\{\begin{array}{l}\frac{4\|\u(t)\|_{\V}^2}{\mu}\|\eta(t)\|_{\H}^2+\frac{4}{\mu}\|\w(t)\|_{\H}^2\|\w(t)\|_{\V}^2, \\ \frac{4\|\u(t)\|_{\V}^2}{\mu}\|\eta(t)\|_{\H}^2+\frac{2}{\mu}\left(2+\frac{9}{8}\beta^2\right)\|\w(t)\|_{\H}^2\|\w(t)\|_{\V}^2+\mu\|\eta(t)\|_{\H}^2, \\ \frac{4\|\u(t)\|_{\V}^2}{\mu}\|\eta(t)\|_{\H}^2+\frac{2}{\mu}(2+27\beta^2)\|\w(t)\|_{\H}^2\|\w(t)\|_{\V}^2+\mu\left(\|\u(t)\|_{\wi\L^4}^4+\|\v(t)\|_{\wi\L^4}^4\right)\|\eta(t)\|_{\H}^2,  
	  	 \end{array}\right.
	  	\end{align} 
	  	for $r=1,2$ and $3$, respectively. 
	  	From the estimate \eqref{311} (see Lemma \ref{lem3.1}), we find 
	  	\begin{align}\label{518}
	  	\|\w(t)\|_{\H}^2+\mu\int_0^t\|\w(s)\|_{\V}^2\d s&\leq\|\w_0\|_{\H}^2\exp\left\{\frac{4}{\mu^2}\left(\|\u_0\|_{\H}^2+\frac{t}{\mu}\|\f\|_{\V'}^2\right)\right\},
	  	\end{align}
	 for all $t\in[0,T]$, where we used \eqref{3p8}. 
	  	  For $r=3$,	using \eqref{518} in \eqref{5.7} and then integrating from $0$ to $t$, we find 
	  	\begin{align}\label{5.15}
	  	&\|\eta(t)\|_{\H}^2+\frac{\mu}{2}\int_0^t\|\eta(s)\|_{\V}^2\d s+2\alpha\int_0^t\|\eta(s)\|_{\H}^2\d s\nonumber\\&\leq  \frac{4}{\mu}\int_0^t\|\u(s)\|_{\V}^2\|\eta(s)\|_{\H}^2\d s+\mu\int_0^t\left(\|\u(s)\|_{\wi\L^4}^4+\|\v(s)\|_{\wi\L^4}^4\right)\|\eta(s)\|_{\H}^2\d s\nonumber\\&\quad+\frac{2}{\mu}(2+27\beta^2)\int_0^t\|\w(s)\|_{\H}^2\|\w(s)\|_{\V}^2\d s\nonumber\\&\leq  \frac{4}{\mu}\int_0^t\|\u(s)\|_{\V}^2\|\eta(s)\|_{\H}^2\d s+\mu\int_0^t\left(\|\u(s)\|_{\wi\L^4}^4+\|\v(s)\|_{\wi\L^4}^4\right)\|\eta(s)\|_{\H}^2\d s\nonumber\\&\quad+\frac{2}{\mu^2}(2+27\beta^2)\|\w_0\|_{\H}^4\exp\left\{\frac{8}{\mu^2}\left(\|\u_0\|_{\H}^2+\frac{t}{\mu}\|\f\|_{\V'}^2\right)\right\}.
	  	\end{align}
	  	An application of Gronwall's inequality in \eqref{5.15} gives
	  	\begin{align}\label{510}
	  	\|\eta(t)\|_{\H}^2&\leq \frac{2}{\mu^2}(2+27\beta^2)\exp\left\{\frac{4}{\mu^2}\left(\|\u_0\|_{\H}^2+\frac{t}{\mu}\|\f\|_{\V'}^2\right)\right\}\|\w_0\|_{\H}^4\nonumber\\&\quad\times \exp\left(\frac{8}{\mu}\int_0^t\|\u(s)\|_{\V}^2\d s+\mu\int_0^t\left(\|\u(s)\|_{\wi\L^4}^4+\|\v(s)\|_{\wi\L^4}^4\right)\d s\right)\nonumber\\&\leq \frac{2}{\mu^2}(2+27\beta^2)\exp\left\{\left(\frac{12}{\mu^2}+\frac{\mu}{\beta}\right)\left(\|\u_0\|_{\H}^2+\|\v_0\|_{\H}^2+\frac{t}{\mu}\|\f\|_{\V'}^2\right)\right\}\|\w_0\|_{\H}^4,
	  	\end{align}
	  	where we used \eqref{3p8}. 	Thus, by the definition of $\eta$, it is immediate that 
	  	\begin{align}\label{620a}
	  	\frac{\|\u(t)-\v(t)-\xi(t)\|_{\H}}{\|\u_0-\v_0\|_{\H}}\leq \vartheta(t)\|\u_0-\v_0\|_{\H},
	  	\end{align}
	  where $\vartheta(t)=\frac{1}{\mu}\sqrt{2(2+27\beta^2)}\exp\left\{\left(\frac{6}{\mu^2}+\frac{\mu}{2\beta}\right)\left(\|\u_0\|_{\H}^2+\|\v_0\|_{\H}^2+\frac{t}{\mu}\|\f\|_{\V'}^2\right)\right\}$,	and hence the differentiability of the semigroup $\S(t)$ with respect to the initial data as well as \eqref{5.2} and \eqref{53} follows. 
  The cases of $r=1,2$ can be proved in a similar way. 
	  \end{proof}

  In the next Theorem, we show that the global attractor obtained in Theorem \ref{thm4.3} has finite Hausdorff and fractal dimensions, and we find their bounds also. 
  \begin{theorem}\label{thm52}
  For $r=1,2,3$,	the global attractor obtained in Theorem \ref{thm4.3} has finite Hausdorff and fractal dimensions, which can be estimated as 
  	 \begin{align}\label{515}
  	\mathrm{dim}^{\H}_{\mathcal{H}}(\mathscr{A}_{\mathrm{glob}})\leq  1+ \frac{\widetilde{\kappa}\|\f\|_{\V'}^2}{\mu^4\lambda_1}
  	\end{align}
  	and 
  	  \begin{align}\label{5.23}
  	\mathrm{dim}^{\H}_{\mathcal{F}}(\mathscr{A}_{\mathrm{glob}})\leq 2\left(1+\frac{2\widetilde{\kappa}\|\f\|_{\V'}^2}{\mu^4\lambda_1^4}\right),
  	\end{align}
  	  for some absolute constant $\widetilde{\kappa}$. 
  \end{theorem}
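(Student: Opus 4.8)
The plan is to bound both dimensions through the Constantin--Foias--Temam trace formula for the uniform global Lyapunov exponents (see \cite{PCCF1,JCR1,Te2}), exploiting the Fr\'echet differentiability of the semigroup and the uniform smallness property \eqref{5.2} already secured in Theorem \ref{thm5.1}. Writing the linearized generator from \eqref{5.1} as
\begin{align*}
L(\u;t)\xi=-\mu\A\xi-\B(\xi,\u)-\B(\u,\xi)-\alpha\xi-\beta\mathcal{C}'(\u)\xi,
\end{align*}
I would fix $\u_0\in\mathscr{A}$, set $\u(t)=\S(t)\u_0$, and for an arbitrary orthonormal family $\{\phi_j\}_{j=1}^n$ in $\H$ estimate the trace $\Tr(L(\u;t)\mathrm{P}_n)=\sum_{j=1}^n(\phi_j,L(\u;t)\phi_j)$, where $\mathrm{P}_n$ projects onto the span of the $\phi_j$. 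The aim is to show that the associated number $q_n$ turns negative once $n$ exceeds a threshold proportional to $\|\f\|_{\V'}^2/(\mu^4\uplambda_1)$, whence the trace criterion yields the bounds.

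The central computation is the pointwise trace bound. Using $b(\u,\phi_j,\phi_j)=0$, discarding $-\alpha\|\phi_j\|_{\H}^2\le 0$, and noting that the derivative of the monotone map $\mathcal{C}$ satisfies $(\mathcal{C}'(\u)\phi_j,\phi_j)\ge 0$ (the infinitesimal form of \eqref{2.8}, since $\mathcal{C}'(\u)\phi=|\u|^{r-1}\phi+(r-1)|\u|^{r-3}(\u\cdot\phi)\u$), one is left with $\Tr(L(\u;t)\mathrm{P}_n)\le -\mu\sum_{j=1}^n\|\phi_j\|_{\V}^2-\sum_{j=1}^n b(\phi_j,\u,\phi_j)$. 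The decisive step is to control the trilinear sum by introducing the density $\rho(x)=\sum_{j=1}^n|\phi_j(x)|^2$, for which $\big|\sum_j b(\phi_j,\u,\phi_j)\big|\le\int_\Omega|\nabla\u(x)|\,\rho(x)\,\d x\le\|\u\|_{\V}\|\rho\|_{\L^2}$, and then to invoke the two-dimensional collective Sobolev (Lieb--Thirring) inequality $\|\rho\|_{\L^2}^2\le \kappa_0\sum_{j=1}^n\|\phi_j\|_{\V}^2$ valid for orthonormal $\H^1_0$ families. After Young's inequality this gives
\begin{align*}
\Tr(L(\u;t)\mathrm{P}_n)\le -\frac{\mu}{2}\sum_{j=1}^n\|\phi_j\|_{\V}^2+\frac{\kappa_0}{2\mu}\|\u(t)\|_{\V}^2.
\end{align*}

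To conclude I would bound the two surviving quantities independently of the family and of $\u_0$. Since $\{\phi_j\}$ is orthonormal in $\H$, the Poincar\'e inequality \eqref{2.1} gives $\sum_{j=1}^n\|\phi_j\|_{\V}^2\ge n\uplambda_1$; this is the only lower bound available, because on a general (possibly unbounded) Poincar\'e domain the Stokes operator need not have discrete spectrum and the sharper $\sum_j\lambda_j$ estimate is lost. For the forcing term I would time-average via the dissipation estimate \eqref{3p9} (equivalently \eqref{3.30}), which yields $\limsup_{T\to\infty}\tfrac1T\int_0^T\|\u(s)\|_{\V}^2\,\d s\le \mu^{-2}\|\f\|_{\V'}^2$ uniformly for $\u_0\in\mathscr{A}$. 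Combining, $q_n\le -\tfrac{\mu\uplambda_1}{2}n+\tfrac{\kappa_0}{2\mu^3}\|\f\|_{\V'}^2$, which is negative as soon as $n>\widetilde\kappa\,\|\f\|_{\V'}^2/(\mu^4\uplambda_1)$; the Hausdorff criterion then delivers \eqref{515}, and the fractal bound \eqref{5.23} follows from the same affine domination $q_n\le -c_1 n+c_2$ through the concavity/covering part of the Constantin--Foias--Temam theory, after tracking the constants.

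I expect the main obstacle to be the collective Sobolev--Lieb--Thirring inequality $\|\rho\|_{\L^2}^2\le\kappa_0\sum_j\|\phi_j\|_{\V}^2$ on a general Poincar\'e domain without any boundary regularity: the classical proofs are stated for bounded or whole-space settings, so I would obtain it by the zero-extension of $\H^1_0(\Omega)$ functions to $\R^2$ together with the whole-space bound. The lack of a discrete spectrum is the structural reason the estimate degrades to the uniform constant $\uplambda_1$ in place of $\sum_j\lambda_j$, and the restriction to $r=1,2,3$ is inherited directly from the differentiability result (see Remark \ref{rem5.2}), beyond which the linearized absorption term $\beta\mathcal{C}'(\u)$ cannot be handled by the present estimates.
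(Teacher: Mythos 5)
Your proposal follows essentially the same route as the paper: the Constantin--Foias--Temam trace formula applied to the linearized generator, dropping the Darcy and absorption contributions by sign, controlling the trilinear sum via the collective Sobolev (Lieb--Thirring) inequality exactly as in the computation the paper borrows from Rosa, bounding $\sum_j\|\phi_j\|_{\V}^2$ from below by $n\uplambda_1$ via Poincar\'e, and closing with the time-averaged enstrophy bound from \eqref{3p9}. The resulting threshold $n\gtrsim\|\f\|_{\V'}^2/(\mu^4\uplambda_1)$ and the passage to the fractal bound via the affine domination $q_n\le -c_1 n+c_2$ coincide with the paper's argument.
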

  \begin{proof}
  We can rewrite \eqref{5.1} as 
  \begin{align}
  \frac{\d\xi}{\d t}=\mathcal{F}'(\u)\xi=-[\mu\A\xi+\B(\xi,\u)+\B(\u,\xi)+\alpha\xi+\beta\mathcal{C}'(\u)]\xi.
  \end{align}
  We define the numbers $q_m$, $m\in\N$ by 
  \begin{align*}
  q_m=\limsup_{t\to\infty}\sup_{\u_0\in\mathscr{A}_{\mathrm{glob}}}\sup_{\substack{_{\varrho_i\in\H}, \ \|\varrho_i\|_{\H}\leq 1\\ i=1,\ldots,n}}\frac{1}{t}\int_0^t\Tr(\mathcal{F}'(\S(s)\u_0)\circ q_m(s))\d s,
  \end{align*}
  where $q_m(s)=q_m(s;\u_0,\varrho_1,\ldots,\varrho_m)$ is the orthogonal projector of $\H$ onto the space spanned by $\Lambda(t;\u_0)\varrho_1,\ldots,\Lambda(t;\u_0)\varrho_m$. From section V.3.4, \cite{Te2} (see Proposition V.2.1 and Theorem V.3.3), we infer that if $q_m<0$, for some $m\in\N$, then the global attractor $\mathscr{A}_{\mathrm{glob}}$ has finite Hausdorff and fractal dimensions estimated respectively as
  \begin{align}
  \mathrm{dim}^{\H}_{\mathcal{H}}(\mathscr{A}_{\mathrm{glob}})&\leq m,\label{523}\\
  \mathrm{dim}^{\H}_{\mathcal{F}}(\mathscr{A}_{\mathrm{glob}})&\leq m\left(1+\max_{1\leq j\leq m}\frac{(q_j)_{+}}{|q_m|}\right). \label{524}
  \end{align}
  Our next aim is to estimate the number $q_m$. Let $\u_0\in\mathscr{A}_{\mathrm{glob}}$, $\varrho_1,\ldots,\varrho_m\in\H$ and set $\u(t)=\S(t)\u_0$ and $\xi_j(t)=\Lambda(t;\u_0)\varrho_j$, $t\geq 0$ and $1\leq j\leq m$. Let us consider  $\{\phi_1(t),\ldots,\phi_m(t)\}$ as an orthonormal basis in $\H$ for $\text{span}\{\xi_1(t),\ldots,\xi_m(t)\}$. Note that $\|\phi_j\|_{\H}^2=1$, for all $1\leq j\leq m$. Since $\xi_j\in \mathrm{L}^2(0,T;\V)$, we know that $\xi_j(t)\in\V$, for a.e. $t\in(0,T)$. By the Gram-Schmidt orthogonalization process, we can assume that $\phi_j(t)\in\V$. Then, one can see that 
  \begin{align}\label{525} \Tr(\mathcal{F}'(\u(s))\circ q_m(s))&=\sum_{j=1}^m(\mathcal{F}'(\u(s))\phi_j,\phi_j)\nonumber\\&=\sum_{j=1}^m\left\{\langle -[\mu\A\phi_j+\B(\phi_j,\u)+\B(\u,\phi_j)+\alpha\phi_j+\beta\mathcal{C}'(\u)\phi_j,\phi_j\rangle\right\}\nonumber\\&=\sum_{j=1}^m\left\{-\mu\|\phi_j\|_{\V}^2-\alpha\|\phi_j\|_{\H}^2-\beta\||\u|^{\frac{r-1}{2}}\phi_j\|_{\H}^2-\langle\B(\phi_j,\u),\phi_j\rangle\right\}.
  \end{align}
 Calculating similarly as in page 82, \cite{RR}, we find
  \begin{align}
  \left|\sum_{j=1}^m\langle\B(\phi_j,\u),\phi_j\rangle\right|\leq\frac{\mu}{2}\sum_{j=1}^m\|\phi_j\|_{\V}^2+\frac{\widetilde{\kappa}\|\u\|_{\V}^2}{2\mu},
  \end{align}
  for an absolute constant $\widetilde{\kappa}$. Thus, from \eqref{525}, we infer that
  \begin{align}\label{527}
   \Tr(\mathcal{F}'(\u(s))\circ q_m(s))&\leq-\frac{\mu}{2}\sum_{j=1}^m\|\phi_j\|_{\V}^2-\alpha\sum_{j=1}^m\|\phi_j\|_{\H}^2-\beta\sum_{j=1}^m\||\u(s)|^{\frac{r-1}{2}}\phi_j(s)\|_{\H}^2+\frac{\widetilde{\kappa}}{2\mu}\|\u(s)\|_{\V}^2\nonumber\\&\leq -\frac{\mu\lambda_1}{2}\sum_{j=1}^m\|\phi_j\|_{\H}^2+\frac{\widetilde{\kappa}}{2\mu}\|\u(s)\|_{\V}^2\leq -\frac{\mu\lambda_1}{2}m+\frac{\widetilde{\kappa}}{2\mu}\|\u(s)\|_{\V}^2.
  \end{align}
  Let us define the energy dissipation flux as 
  \begin{align}
  \mathcal{E}=\mu\lambda_1\limsup\limits_{t\to\infty}\sup_{\u_0\in\mathscr{A}_{\mathrm{glob}}}\frac{1}{t}\int_0^t\|\S(s)\u_0\|_{\V}^2\d s.
  \end{align}
  Note that the energy dissipation flux $\mathcal{E}$ is finite due to the estimate \eqref{3p9} and 
  $\mathcal{E}\leq \frac{\lambda_1}{\mu}\|\f\|_{\V'}^2.$
  Then, from \eqref{527}, we have 
  \begin{align}
  q_m&=\limsup_{t\to\infty}\sup_{\u_0\in\mathscr{A}_{\mathrm{glob}}}\sup_{\substack{_{\varrho_i\in\H}, \ \|\varrho_i\|_{\H}\leq 1\\ i=1,\ldots,n}}\frac{1}{t}\int_0^t\Tr(\mathcal{F}'(\S(s)\u_0)\circ q_m(s))\d s\leq-\frac{\mu\lambda_1}{2}m+\frac{\widetilde{\kappa}}{2\mu^2\lambda_1}\mathcal{E}, 
  \end{align}
  for all $m\in\N$. Hence, if $m'\in\N$ is defined by $m'-1\leq \frac{\widetilde{\kappa}}{\mu^3\lambda_1^2}\mathcal{E}<m',$ then $q_{m'}<0$ and thus from \eqref{523}, we find 
  \begin{align}
   \mathrm{dim}^{\H}_{\mathcal{H}}(\mathscr{A}_{\mathrm{glob}})&\leq m'\leq 1+\frac{\widetilde{\kappa}}{\mu^3\lambda_1^2}\mathcal{E}\leq 1+ \frac{\widetilde{\kappa}\|\f\|_{\V'}^2}{\mu^4\lambda_1}.
  \end{align}
  Furthermore, if $m''\in\N$ is defined by 
  $m''-1<\frac{2\widetilde{\kappa}}{\mu^3\lambda_1^2}\mathcal{E}\leq m'',$ then using Lemma VI.2.2, \cite{Te2}, we have 
  \begin{align}
  q_{m''}<0\ \text{ and }\ \frac{(q_j)_{+}}{|q_{m''}|}\leq 1, \ \text{ for all }\ j=1,\ldots,m''. 
  \end{align}
  Hence from \eqref{524}, we obtain 
  \begin{align}
   \mathrm{dim}^{\H}_{\mathcal{F}}(\mathscr{A}_{\mathrm{glob}})\leq 2m''\leq 2\left(1+\frac{2\widetilde{\kappa}}{\mu^3\lambda_1^2}\mathcal{E}\right)\leq 2\left(1+\frac{2\widetilde{\kappa}\|\f\|_{\V'}^2}{\mu^4\lambda_1^4}\right),
  \end{align}
  which completes the proof. 
  \end{proof}

\begin{remark}
	For the force $\f\in\V'$, if we take $\lambda_1^{1/2}$ for a characteristic length for the problem, one can regard $\|\f\|_{\V'}^{1/2}\lambda_1^{1/4}$ as a characteristic velocity and define the Reynolds number $\mathrm{Re}=\frac{\|\f\|_{\V'}^{1/2}}{\nu\lambda_1^{1/4}}$. We can also define the generalized Grashof number $G=\frac{\|\f\|_{\V'}}{\nu^2\lambda_1^{1/2}}=\mathrm{Re}^2$. Thus, from \eqref{515} and \eqref{5.23}, we infer that 
	\begin{align*}
		\mathrm{dim}^{\H}_{\mathcal{H}}(\mathscr{A}_{\mathrm{glob}})\leq 1+\widetilde{\kappa}G^2=1+\widetilde{\kappa}\mathrm{Re}^4 \ \text{ and }\ 	\mathrm{dim}^{\H}_{\mathcal{F}}(\mathscr{A}_{\mathrm{glob}})\leq 2(1+2\widetilde{\kappa}G^2)=2(1+2\widetilde{\kappa}\mathrm{Re}^4),
	\end{align*}
	for an absolute constant $\widetilde{\kappa}$. 
\end{remark}

	    \section{Upper Semicontinuity of Global Attractors} \label{sec7}\setcounter{equation}{0} In this section, we   establish an upper semicontinuity of global attractors for the 2D CBF equations. Let $\{\Omega_m\}_{m=1}^{\infty}$ be an expanding sequence of simply connected, bounded and smooth subdomains of $\Omega$ (for example, one can take $\Omega=\R\times(-L,L)$) such that $\Omega_m\to\Omega$ as   $m \to\infty$. We consider 
	    \begin{eqnarray}\label{4.1}
	    \left\{
	    \begin{aligned}
	    \frac{\partial \u_m(x,t)}{\partial t}-\mu\Delta\u_m(x,t)+(\u_m(x,t)\cdot\nabla)\u_m(x,t)&+\alpha\u_m(x,t)+\beta|\u_m(x,t)|^{r-1}\u_m(x,t)\\+\nabla p_m(x,t)&=\mathbf{f}_m(x), \\ \nabla\cdot\u_m(x,t)&=0,\\ \u_m(x,0)&=\u_{0,m}(x), 
	    \end{aligned}
	    \right.
	    \end{eqnarray}  for $(x,t)\in\Omega\times(0,T)$. In \eqref{4.1}, $\f_m(\cdot)$ and $\u_{0,m}(\cdot)$ are given by 
	    \begin{align}\label{72}
	    \f_m(x)=\left\{\begin{array}{cl}\f(x),&x\in\Omega_m, \\
	    \mathbf{0},& x\in\Omega\backslash\Omega_m,\end{array}\right. \ \text{ and }  \ \u_{0,m}(x)=\left\{\begin{array}{cl}\u_0(x),&x\in\Omega_m, \\
	    \mathbf{0},& x\in\Omega\backslash\Omega_m.\end{array}\right.
	    \end{align}
	     If we take orthogonal projection $\mathcal{P}$ onto the system \eqref{4.1}, we get 
	    \begin{equation}\label{7.24}
	    \left\{
	    \begin{aligned}
	    \frac{\d\u_m(t)}{\d t}+\mu\A\u_m(t)+\B(\u_m(t))+\alpha\u_m(t)+\beta\mathcal{C}(\u_m(t))&=\f_m, \ t\in (0,T),\\
	    \u_m(0)=\u_{0,m}, \v_m(0)=\v_{0,m}&=\mathbf{0},
	    \end{aligned}
	    \right.
	    \end{equation}
	    where $\f_m$ and $\u_{m,0}$ are defined as in \eqref{72}  (projected). Throughout this section, we differentiate the $\H$-spaces defined in $\Omega$ and $\Omega_m$ as $\H_{\Omega}$ and $\H_{\Omega_m},$ respectively and similar modifications are made for other spaces also. Furthermore, we assume that $\f\in\H_{\Omega}$.  Let $\u_m\in\mathrm{L}^{\infty}(0,T;\H_{\Omega_m})\cap\mathrm{L}^2(0,T;\V_{\Omega_m})$ be the unique weak solution of the system \eqref{7.24} with $\partial_t\u_m\in\mathrm{L}^{2}(0,T;\V_{\Omega_m}')$ and hence $\u_m\in\C([0,T];\H_{\Omega_m})$.  The 2D CBF model \eqref{7.24} possesses an attractor $\mathscr{A}_m$ that is compact, connected, and global in $\H_{\Omega_m}$ (see Theorem \ref{thm3.4} and Remark \ref{rem3.7}).	Our aim in this section is to check whether the global attractors $\mathscr{A}$ and  $\mathscr{A}_m$ (for simplicity of notations) of the 2D CBF equations corresponding   to $\Omega$ and $\Omega_m$, respectively, have the upper semicontinuity, when $m\to\infty$.   The upper semicontinuity of the  Klein-Gordon-Schr\"odinger equations on $\R^n$, $n\leq 3$ is established in \cite{LW} and 2D Navier-Stokes equations is obtained in \cite{ZD}. We follow the works \cite{LW,ZD}, etc for getting such a result for 2D CBF equations. We are providing a complete proof of the upper semicontinuity results, as we are improving some of the estimates used in \cite{ZD} and the pressure estimates are different in both works.
    \begin{lemma}\label{lem4.2}
    	Assume that $\u_0\in\mathcal{B}_1,$ where $\mathcal{B}_1$ is the absorbing set defined in \eqref{314b}. Then, for any $\e>0$, there exist $T_1(\e)$ and $m_1(\e)$ such that the solution $\u(\cdot)$ of the problem \eqref{1} with the initial condition $\u_0$ satisfies:
    	\begin{align}\label{74}
    	\int_{\Omega\backslash\Omega_m}|\u(x,t)|^2\d x\leq\e, 
    	\end{align}
    	for $t\geq T_1(\e)$ and $m\geq m_1(\e)$, where $T_1(\e)$ and	$m_1(\e)$  are constants depending only on $\e$.
    \end{lemma}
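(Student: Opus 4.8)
The plan is to establish a uniform tail estimate in $\L^2$ by a localized energy argument, in which the damping is supplied by the Darcy term $\alpha\u$ rather than by any Poincar\'e inequality on $\Omega\setminus\Omega_m$; the uniform-in-time control of the solution is borrowed from the absorbing set $\mathcal{B}_1$. First I would fix a smooth cut-off $\rho\colon\R^+\to[0,1]$ with $\rho(s)=0$ for $s\in[0,1]$, $\rho(s)=1$ for $s\geq 2$ and $|\rho'|,|\rho''|\leq C$, and set $\rho_R(x)=\rho(|x|^2/R^2)$. Then $\rho_R$ is supported in $\{|x|\geq R\}$, equals $1$ on $\{|x|\geq\sqrt2\,R\}$, while $\nabla\rho_R$ is supported in the annulus $\{R\leq|x|\leq\sqrt2\,R\}$ with $|\nabla\rho_R|\leq C/R$ and $|\Delta\rho_R|\leq C/R^2$. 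Taking the $\L^2(\Omega)$ inner product of the momentum equation in \eqref{1} with $\rho_R\u$ and integrating by parts using $\nabla\cdot\u=0$ and the boundary condition yields, with $y_R(t):=\int_\Omega\rho_R|\u(x,t)|^2\d x$,
\[
\tfrac12\tfrac{\d}{\d t}y_R+\mu\int_\Omega\rho_R|\nabla\u|^2\d x+\alpha y_R+\beta\int_\Omega\rho_R|\u|^{r+1}\d x=\tfrac{\mu}{2}\int_\Omega|\u|^2\Delta\rho_R\d x+\tfrac12\int_\Omega|\u|^2(\u\cdot\nabla\rho_R)\d x+\int_\Omega p\,(\u\cdot\nabla\rho_R)\d x+\int_\Omega\rho_R\,\f\cdot\u\d x.
\]

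Dropping the two nonnegative terms on the left and treating the forcing by Young's inequality, $\int_\Omega\rho_R\f\cdot\u\d x\leq\frac{1}{2\alpha}\int_{\{|x|\geq R\}}|\f|^2\d x+\frac{\alpha}{2}y_R$, I would arrive at a differential inequality of the form $\frac{\d}{\d t}y_R+\alpha y_R\leq\mathcal{R}_R(t)+\frac{1}{\alpha}\int_{\{|x|\geq R\}}|\f|^2\d x$, where $\mathcal{R}_R$ collects the three commutator terms, each carrying a factor of $1/R$ (or $1/R^2$) because of $\nabla\rho_R,\Delta\rho_R$. The next step is to bound $\mathcal{R}_R$ using the uniform estimates for $\u_0\in\mathcal{B}_1$: the viscous commutator is $\leq C\widetilde{M}_1^2/R^2$ by \eqref{3p32}; the convective commutator is $\leq\frac{C}{R}\|\u\|_{\wi\L^3}^3\leq\frac{C}{R}\|\u\|_{\H}\|\u\|_{\V}^2$ by \eqref{Gen_lady}, whose time integral is controlled by \eqref{3.30}.

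The main obstacle is the pressure commutator $\int_\Omega p\,(\u\cdot\nabla\rho_R)\d x$, which does not vanish since $\rho_R\u$ is not divergence free. Here I would invoke the pressure Poisson equation, obtained by taking the divergence of \eqref{1}, namely $-\Delta p=\partial_i\partial_j(u_iu_j)+\beta\,\nabla\cdot(|\u|^{r-1}\u)$, and the associated elliptic (Calder\'on--Zygmund, respectively Helmholtz-projection) estimate to get $\|p\|_{\L^2(\Omega)}\leq C\|\u\|_{\wi\L^4}^2+C\beta\|\u\|_{\wi\L^{2r}}^r$; combined with Ladyzhenskaya's inequality and $\|\u\|_{\wi\L^{2r}}^r\leq C\|\u\|_{\H}^{r-1}\|\u\|_{\V}$, this bounds the pressure commutator by $\frac{C}{R}\widetilde{M}_1\big(\|\u\|_{\H}\|\u\|_{\V}+\beta\|\u\|_{\H}^{r-1}\|\u\|_{\V}\big)$, again time-integrable against \eqref{3.30}. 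Securing this $\L^2$ pressure bound in the (possibly irregular) unbounded domain $\Omega$ is the delicate point, and is where the argument must be carried out with care.

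Finally I would integrate the differential inequality from $t_0=0$, using $y_R(0)\leq\|\u_0\|_{\H}^2\leq M_1^2$ together with the standard estimate $\int_0^t e^{-\alpha(t-s)}\mathcal{R}_R(s)\d s\leq C/R$ (splitting $[0,t]$ into unit intervals and summing the geometric factors against the uniform bounds), to conclude
\[
y_R(t)\leq \widetilde{M}_1^2\,e^{-\alpha t}+\frac{C}{R}+\frac{C}{\alpha^2}\int_{\{|x|\geq R\}}|\f|^2\d x,\qquad t\geq 0.
\]
Since $\f\in\H_\Omega\subset\L^2(\Omega)$, its tail $\int_{\{|x|\geq R\}}|\f|^2\d x\to0$ as $R\to\infty$, so I first choose $R$ so large that the last two terms are below $\e/2$, then choose $T_1(\e)$ so that $\widetilde{M}_1^2e^{-\alpha t}<\e/2$ for $t\geq T_1(\e)$, and finally choose $m_1(\e)$ so that the expanding property $\Omega_m\uparrow\Omega$ guarantees $\Omega\setminus\Omega_m\subset\{|x|\geq\sqrt2\,R\}$ for all $m\geq m_1(\e)$. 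On that set $\rho_R\equiv1$, hence $\int_{\Omega\setminus\Omega_m}|\u(x,t)|^2\d x\leq y_R(t)\leq\e$ for $t\geq T_1(\e)$ and $m\geq m_1(\e)$, which is \eqref{74}.
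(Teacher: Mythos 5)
Your proposal is correct and follows essentially the same route as the paper: a localized energy estimate with a radial cut-off, the pressure recovered from the Poisson equation $-\Delta p=\partial_i\partial_j(u_iu_j)+\beta\nabla\cdot(|\u|^{r-1}\u)$ with a Calder\'on--Zygmund $\L^2$ bound, uniform control from the absorbing set $\mathcal{B}_1$ and \eqref{3.30}, and a variation-of-constants argument followed by the choice of $R$, $T_1(\e)$, $m_1(\e)$. The only (harmless) deviations are that you test with $\rho_R\u$ instead of $\chi_R^2\u$, discard the sign-definite Forchheimer term rather than estimating it, and take the decay rate from the Darcy coefficient $\alpha$ rather than from $\mu\uplambda_1$.
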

\begin{proof}
	Let us first define a cutoff function $\chi(\cdot)\in\mathrm{C}^{1}(\R^2;\R)$ as 
	\begin{align*}
	\chi(x)=\left\{\begin{array}{cl}0,& \ \text{ if } \ |x|<1, \\ (|x|^2-1)^2(2-|x|^2)^2,&  \ \text{ if } \ 1\leq |x|<2,\\ 
	1,&  \ \text{ if } \ |x|\geq 2,\end{array}\right.
	\end{align*}
and note that $0\leq \chi(x)\leq 1$.	Let us set $\chi_R(x)=\chi\left(\frac{x}{R}\right),$ for sufficiently large $R>1$, then
\begin{align*}
\chi_R(x)=\left\{\begin{array}{cl}0,& \ \text{ if } \ |x|<R, \\ \left(\frac{|x|^2}{R^2}-1\right)^2\left(2-\frac{|x|^2}{R^2}\right)^2,&  \ \text{ if } \ R\leq |x|<2R,\\ 
1,&  \ \text{ if } \ |x|\geq 2R,\end{array}\right.
\end{align*}
and 
\begin{align*}
\nabla \chi_R(x)=\left\{\begin{array}{cl}0,&\ \text{ if } |x|<R \ \text{ or } \  |x|\geq 2R, \\
\frac{4}{R^2}\left(\frac{|x|^2}{R^2}-1\right)\left(2-\frac{|x|^2}{R^2}\right)x&  \ \text{ if } \ R\leq |x|<2R.\end{array}\right.
\end{align*}
 It can be easily seen that 
	\begin{align*}
	\|\nabla\chi_R\|_{\mathbb{L}^{\infty}(\R^2)}\leq \frac{C}{R},
	\end{align*}
where $C=12$.	Taking divergence in the first equation in \eqref{1}, we find 
	\begin{align*}
	-\Delta p&=\nabla\cdot[(\u\cdot\nabla)\u]+\beta\nabla\cdot[|\u|^{r-1}\u]=\nabla\cdot(\nabla\cdot(\u\otimes\u))+\beta\nabla\cdot[|\u|^{r-1}\u]\\&=\sum_{i,j=1}^2\frac{\partial^2}{\partial x_i\partial x_j}(u_iu_j)+\beta\nabla\cdot[|\u|^{r-1}\u],
	\end{align*}
	so that 
	$$p=(-\Delta)^{-1}\left[\sum_{i,j=1}^2\frac{\partial^2}{\partial x_i\partial x_j}(u_iu_j)+\beta\nabla\cdot[|\u|^{r-1}\u]\right]. $$ For $v\in\mathrm{L}^{2}(0,T;\mathrm{L}^2(\Omega))\cap\mathrm{L}^{r+1}(0,T;\mathrm{L}^{r+1}(\Omega))$, we find 
	\begin{align*}
	\int_0^T|\langle p(t),v(t)\rangle|\d t&\leq\int_0^T\left|\left<(-\Delta)^{-1}\sum_{i,j=1}^2\frac{\partial^2}{\partial x_i\partial x_j}(u_i(t)u_j(t)),v(t)\right>\right|\d t\nonumber\\&\quad+\beta\int_0^T\left|\left<(-\Delta)^{-1}\nabla\cdot[|\u(t)|^{r-1}\u(t)],v(t)\right>\right|\d t\nonumber\\&\leq C\int_0^T\|\u(t)\|_{\wi\L^4_\Omega}^2\|v(t)\|_{\mathrm{L}^2(\Omega)}\d t+C\beta\int_0^T\|\u(t)\|_{\wi\L^{r+1}_\Omega}^{r}\|v(t)\|_{\mathrm{L}^{r+1}(\Omega)}\d t\nonumber\\&\leq C\left(\int_0^T\|\u(t)\|_{\wi\L^4_\Omega}^4\d t\right)^{1/2}\left(\int_0^T\|v(t)\|_{\mathrm{L}^2(\Omega)}^2\d t\right)^{1/2}\nonumber\\&\quad+C\beta\left(\int_0^T\|\u(t)\|_{\wi\L^{r+1}_\Omega}^{r+1}\d t\right)^{\frac{r}{r+1}}\left(\int_0^T\|v(t)\|_{\mathrm{L}^{r+1}(\Omega)}^{r+1}\d t\right)^{\frac{1}{r+1}},
	\end{align*}
so that $p\in\mathrm{L}^2(0,T;\mathrm{L}^2(\Omega))+\mathrm{L}^{\frac{r+1}{r}}(0,T;\mathrm{L}^{\frac{r+1}{r}}(\Omega))$. Similarly for $v\in\mathrm{L}^{r+1}(0,T;\mathrm{L}^2(\Omega)\cap\mathrm{L}^{r+1}(\Omega))$ and all $\theta\geq 0$, we have 
\begin{align*}
\int_t^{t+\theta}|\langle p(s),v(s)\rangle|\d s&\leq C\left\{ \theta^{\frac{r-1}{2(r+1)}}\left(\int_t^{t+\theta}\|\u(s)\|_{\wi\L^4_\Omega}^4\d s\right)^{\frac{1}{2}}+\beta\left(\int_t^{t+\theta}\|\u(s)\|_{\wi\L^{r+1}_\Omega}^{r+1}\d s\right)^{\frac{r}{r+1}}\right\}\nonumber\\&\quad\times\left(\int_0^T\|v(t)\|_{\mathrm{L}^{r+1}(\Omega)}^{r+1}\d t\right)^{\frac{1}{r+1}},
\end{align*}
so that we get 
\begin{align}\label{6p5}
&\left(\int_t^{t+\theta}\|p(s)\|_{\mathrm{L}^2(\Omega)+\mathrm{L}^{\frac{r+1}{r}}(\Omega)}^{\frac{r+1}{r}}\d s \right)^{\frac{r}{r+1}}\nonumber\\&\leq C\left\{ \theta^{\frac{r-1}{2(r+1)}}\sup_{s\in[t,t+\theta]}\|\u(s)\|_{\H_{\Omega}}\left(\int_t^{t+\theta}\|\u(s)\|_{\V_{\Omega}}^2\d s\right)^{\frac{1}{2}}+\beta\left(\int_t^{t+\theta}\|\u(s)\|_{\wi\L^{r+1}_\Omega}^{r+1}\d s\right)^{\frac{r}{r+1}}\right\}\nonumber\\&\leq C\left\{\theta^{\frac{r-1}{2(r+1)}}\frac{M_1}{\sqrt{\mu}}\left(M_1^2+\frac{\theta}{\mu}\|\f\|_{\V_{\Omega}'}^2\right)^{\frac{1}{2}}+\beta^{\frac{1}{r+1}}\left(M_1^2+\frac{\theta}{\mu}\|\f\|_{\V_{\Omega}'}^2\right)^{\frac{r}{r+1}}\right\},
\end{align}
where we used \eqref{3p29}.	Taking the inner product with $\chi_R^2(x)\u(t,x)$ to the first equation  in \eqref{1}, we find 
	\begin{align}\label{4.9}
	\frac{1}{2}\frac{\d}{\d t}\|\chi_R\u(t)\|_{\L^2(\Omega)}^2& = -\mu(\nabla\u(t),\nabla(\chi_R^2\u(t)))-\langle(\u(t)\cdot\nabla)\u(t),\chi_R^2\u(t)\rangle-\alpha\|\chi_R\u(t)\|_{\L^2(\Omega)}^2\nonumber\\&\quad -\beta\langle|\u(t)|^{r-1}\u(t),\chi_R^2\u(t)\rangle-\langle\nabla p(t),\chi_R^2\u(t)\rangle-(\f,\chi_R^2\u(t)).
	\end{align}
	Using H\"older's and Young's inequalities, we estimate $-\mu(\nabla\u,\nabla(\chi_R^2\u))$ as 
	\begin{align}
	-\mu(\nabla\u,\nabla(\chi_R^2\u))&=-\mu(\nabla\u,\chi_R\nabla(\chi_R\u))-\mu(\nabla\u,\nabla(\chi_R)\chi_R\u)\nonumber\\&=-\mu(\nabla(\chi_R\u)-\nabla(\chi_R)\u,\nabla(\chi_R\u))-\mu(\nabla\u,\nabla(\chi_R)\chi_R\u)\nonumber\\&=-\mu\|\nabla(\chi_R\u)\|_{\L^2(\Omega)}^2+\mu(\nabla(\chi_R)\u,\nabla(\chi_R\u))-\mu(\nabla\u,\nabla(\chi_R)\chi_R\u) \nonumber\\&\leq -\mu\|\nabla(\chi_R\u)\|_{\L^2(\Omega)}^2+\mu\|\nabla\chi_R\|_{\L^{\infty}(\R^2)}\|\u\|_{\H_{\Omega}}\|\nabla(\chi_R\u)\|_{\L^2(\Omega)}\nonumber\\&\quad+\mu\|\nabla\u\|_{\H_{\Omega}}\|\nabla\chi_R\|_{\L^{\infty}(\R^2)}\|\chi_R\u\|_{\L^2(\Omega)}\nonumber\\&\leq-\frac{\mu}{2}\|\nabla(\chi_R\u)\|_{\L^2(\Omega)}^2+\frac{C\mu}{R}\|\u\|_{\H_{\Omega}}^2+\frac{C\mu}{R}\left(\|\nabla\u\|_{\H_{\Omega}}^2+\|\u\|_{\H_{\Omega}}^2\right)\nonumber\\&\leq-\frac{\mu}{2}\|\nabla(\chi_R\u)\|_{\L^2(\Omega)}^2+\frac{C\mu M_1^2}{R}+\frac{C\mu}{R}\|\u\|_{\V_{\Omega}}^2.
	\end{align}
	We consider the term $-\langle(\u\cdot\nabla)\u,\chi_R^2\u\rangle$ from \eqref{4.9} and using an integration by parts, divergence free condition, H\"older's, Ladyzhenskaya's and Young's inequalities, we  estimate it as 
	\begin{align}\label{4.10}
	-\langle(\u\cdot\nabla)\u,\chi_R^2\u\rangle&=-\sum_{i,j=1}^2\int_{\Omega}u_i(x)\frac{\partial u_j(x)}{\partial x_i}\chi_R^2(x)u_j(x)\d x \nonumber\\&=-\frac{1}{2}\sum_{i,j=1}^2\int_{\Omega}\chi_R^2(x)u_i(x)\frac{\partial u_j^2(x)}{\partial x_i}\d x\nonumber\\& =\sum_{i,j=1}^2\int_{\Omega}\chi_R(x)\frac{\partial\chi_R(x)}{\partial x_i}u_i(x)u_j^2(x)\d x\nonumber\\&\leq\|\nabla\chi_R\|_{\L^{\infty}(\R^2)}\|\chi_R\u\|_{\L^2(\Omega)}\|\u\|_{\L^4(\Omega)}^2\nonumber\\&\leq \frac{C}{R}\left(\|\u\|_{\V_{\Omega}}^2+\|\u\|_{\H_{\Omega}}^4\right)\leq \frac{C}{R}\left(\|\u\|_{\V_{\Omega}}^2+M_1^4\right).
	\end{align}
	Once again an integration by parts and divergence free condition yields 
	\begin{align}
	-\langle\nabla p,\chi_R^2\u\rangle&=\langle p,\nabla\cdot(\chi_R^2\u)\rangle=2\langle p,\chi_R\nabla(\chi_R)\cdot\u\rangle\nonumber\\&\leq 2\|p\|_{\mathrm{L}^2(\Omega)+\mathrm{L}^{\frac{r+1}{r}}(\Omega)}\|\chi_R\nabla(\chi_R)\cdot\u\|_{\mathrm{L}^2(\Omega)\cap\mathrm{L}^{r+1}(\Omega)}\nonumber\\&\leq 2\|p\|_{\mathrm{L}^2(\Omega)+\mathrm{L}^{\frac{r+1}{r}}(\Omega)} \|\nabla\chi_R\|_{\L^{\infty}(\R^2)}\left(\|\chi_R\u\|_{\L^2(\Omega)}+\|\chi_R\u\|_{\wi\L^{r+1}(\Omega)}\right)\nonumber\\&\leq \frac{C}{R}\|p\|_{\mathrm{L}^2(\Omega)+\mathrm{L}^{\frac{r+1}{r}}(\Omega)}\left(\|\u\|_{\H_{\Omega}}+\|\u\|_{\wi\L^{r+1}_{\Omega}}\right)\nonumber\\&\leq \frac{C}{R}\left(\|p\|_{\mathrm{L}^2(\Omega)+\mathrm{L}^{\frac{r+1}{r}}(\Omega)}^{\frac{r+1}{r}}+\|\u\|_{\H_{\Omega}}^{r+1}+\|\u\|_{\wi\L^{r+1}_{\Omega}}^{r+1}\right)\nonumber\\&\leq  \frac{C}{R}\left(\|p\|_{\mathrm{L}^2(\Omega)+\mathrm{L}^{\frac{r+1}{r}}(\Omega)}^{\frac{r+1}{r}}+M_1^{r+1}+\|\u\|_{\wi\L^{r+1}_{\Omega}}^{r+1}\right).
	\end{align}
	Using the Cauchy-Schwarz inequality, we estimate $-(\f,\chi_R^2\u)$ as 
	\begin{align}\label{4.16}
	-(\f,\chi_R^2\u)\leq \|\chi_R\f\|_{\L^{2}(\Omega)}\|\chi_R\u\|_{\L^2(\Omega)}\leq M_1\|\chi_R\f\|_{\L^{2}(\Omega)}.
	\end{align}
Combining \eqref{4.10}-\eqref{4.16} and substituting it in \eqref{4.9}, we find 
	\begin{align}\label{4.17}
&	\frac{1}{2}\frac{\d}{\d t}\|\chi_R\u(t)\|_{\L^2(\Omega)}^2+\frac{\mu}{2}\|\nabla(\chi_R\u(t))\|_{\L^2(\Omega)}^2+\alpha\|\chi_R\u(t)\|_{\L^2(\Omega)}^2+\beta\|\chi_R|\u(t)|^{\frac{r-1}{2}}\|_{\mathbb{L}^2(\Omega)}^2\nonumber\\& \leq M_1\|\chi_R\f\|_{\L^{2}(\Omega)}+\frac{C}{R}\left[(\mu+1)\|\u(t)\|_{\V_{\Omega}}^2+\|p(t)\|_{\mathrm{L}^2(\Omega)+\mathrm{L}^{\frac{r+1}{r}}(\Omega)}^{\frac{r+1}{r}}+\|\u(t)\|_{\wi\L^{r+1}_{\Omega}}^{r+1}\right.\nonumber\\&\qquad\qquad\qquad\qquad\qquad\left.+M_1^2(M_1^{r-1}+M_1^2+\mu)\right].
	\end{align}
Using variation of constants formula, we obtain 
\begin{align}\label{4.18}
\|\chi_R\u(t)\|_{\L^2(\Omega)}^2&\leq \|\chi_R\u_0\|_{\L^2(\Omega)}^2e^{-\mu\lambda_1 t}+\frac{2M_1\|\chi_R\f\|_{\L^2(\Omega)}}{\mu\lambda_1}+\frac{CM_1^2}{R\mu\lambda_1}(M_1^{r-1}+M_1^2+\mu)\nonumber\\&\quad+\frac{C(\mu+1)}{R}\int_0^te^{-\mu\lambda_1(t-s)}\left[\|\u(s)\|_{\V_{\Omega}}^2+\|p(s)\|_{\mathrm{L}^2(\Omega)+\mathrm{L}^{\frac{r+1}{r}}(\Omega)}^{\frac{r+1}{r}}+\|\u(s)\|_{\wi\L^{r+1}_{\Omega}}^{r+1}\right]\d s.
\end{align}
Let $k$ be an integer such that $k\leq t\leq k+1$ and consider 
\begin{align}\label{619}
&e^{-\mu\lambda_1 t}\int_0^te^{\mu\lambda_1 s}\left[\|\u(s)\|_{\V_{\Omega}}^2+\|p(s)\|_{\mathrm{L}^2(\Omega)+\mathrm{L}^{\frac{r+1}{r}}(\Omega)}^{\frac{r+1}{r}}+\|\u(s)\|_{\wi\L^{r+1}_{\Omega}}^{r+1}\right]\d s\nonumber\\&\leq e^{-\mu\lambda_1 t}\sum_{j=0}^ke^{\mu\lambda_1(j+1)}\int_j^{j+1}\left[\|\u(s)\|_{\V_{\Omega}}^2+\|p(s)\|_{\mathrm{L}^2(\Omega)+\mathrm{L}^{\frac{r+1}{r}}(\Omega)}^{\frac{r+1}{r}}+\|\u(s)\|_{\wi\L^{r+1}_{\Omega}}^{r+1}\right]\d s\nonumber\\&\leq e^{-\mu\lambda_1 t}\sum_{j=0}^ke^{\mu\lambda_1(j+1)}\Bigg\{\left(\frac{1}{\mu}+\frac{1}{\beta}\right)\left(M_1^2+\frac{1}{\mu}\|\f\|_{\V'_{\Omega}}^2\right)\nonumber\\&\qquad+C\left[\frac{M_1}{\sqrt{\mu}}\left(M_1^2+\frac{1}{\mu}\|\f\|_{\V_{\Omega}'}^2\right)^{\frac{1}{2}}+\beta^{\frac{1}{r+1}}\left(M_1^2+\frac{\theta}{\mu}\|\f\|_{\V_{\Omega}'}^2\right)^{\frac{r}{r+1}}\right]^{\frac{r+1}{r}}\Bigg\}\nonumber\\&=e^{-\mu\lambda_1 t}\Bigg\{\left(\frac{1}{\mu}+\frac{1}{\beta}\right)\left(M_1^2+\frac{1}{\mu}\|\f\|_{\V'_{\Omega}}^2\right)\nonumber\\&\qquad+C\left[\frac{M_1}{\sqrt{\mu}}\left(M_1^2+\frac{1}{\mu}\|\f\|_{\V_{\Omega}'}^2\right)^{\frac{1}{2}}+\beta^{\frac{1}{r+1}}\left(M_1^2+\frac{\theta}{\mu}\|\f\|_{\V_{\Omega}'}^2\right)^{\frac{r}{r+1}}\right]^{\frac{r+1}{r}}\Bigg\}\frac{e^{\mu\lambda_1}(e^{(k+1)\mu\lambda_1}-1)}{e^{\mu\lambda_1}-1}\nonumber\\&\leq \Bigg\{\left(\frac{1}{\mu}+\frac{1}{\beta}\right)\left(M_1^2+\frac{1}{\mu}\|\f\|_{\V'_{\Omega}}^2\right)\nonumber\\&\qquad+C\left[\frac{M_1}{\sqrt{\mu}}\left(M_1^2+\frac{1}{\mu}\|\f\|_{\V_{\Omega}'}^2\right)^{\frac{1}{2}}+\beta^{\frac{1}{r+1}}\left(M_1^2+\frac{\theta}{\mu}\|\f\|_{\V_{\Omega}'}^2\right)^{\frac{r}{r+1}}\right]^{\frac{r+1}{r}}\Bigg\}\frac{e^{2\mu\lambda_1}}{e^{\mu\lambda_1}-1},
\end{align}
where we used  \eqref{3p29} and \eqref{6p5}. 
Thus, from \eqref{4.18}, we immediately have 
\begin{align}\label{4.20}
\|\chi_R\u(t)\|_{\L^2(\Omega)}^2&\leq  \|\chi_R\u_0\|_{\L^2(\Omega)}^2e^{-\mu\lambda_1 t}+\frac{2M_1\|\chi_R\f\|_{\L^2(\Omega)}}{\mu\lambda_1}+\frac{CM_1^2}{R\mu\lambda_1}(M_1^{r-1}+M_1^2+\mu)\nonumber\\&\quad+\frac{C(\mu+1)}{R}\Bigg\{\left(\frac{1}{\mu}+\frac{1}{\beta}\right)\left(M_1^2+\frac{1}{\mu}\|\f\|_{\V'_{\Omega}}^2\right)\nonumber\\&\qquad+C\left[\frac{M_1}{\sqrt{\mu}}\left(M_1^2+\frac{1}{\mu}\|\f\|_{\V_{\Omega}'}^2\right)^{\frac{1}{2}}+\beta^{\frac{1}{r+1}}\left(M_1^2+\frac{\theta}{\mu}\|\f\|_{\V_{\Omega}'}^2\right)^{\frac{r}{r+1}}\right]^{\frac{r+1}{r}}\Bigg\}\frac{e^{2\mu\lambda_1}}{e^{\mu\lambda_1}-1}.
\end{align}
Since $\f\in\H_{\Omega}$, it can be easily seen that $\lim\limits_{R\to\infty}\|\chi_R\f\|_{\L^2(\Omega)}=0$. Hence, for any $\e>0$, there exist $m_1(\e)$ and $T_1(\e)$ such that $$\|\chi_{\frac{m_1(\e)}{2}}\u(t)\|_{\L^2(\Omega)}^2<\e, \ \text{ for all } \ t>T_1(\e).$$ Moreover, for $m>m_1(\e)$, we finally have 
\begin{align}
\|\u(t)\|_{\L^2(\Omega\backslash\Omega_m)}^2\leq \|\chi_{\frac{m_1(\e)}{2}}\u(t)\|_{\L^2(\Omega)}^2\leq \e,
\end{align}
for all $t>T_1(\e)$. 
\end{proof}

One can prove the following Lemma in a similar fashion as in Lemma \ref{lem4.2}. 
\begin{lemma}
		Assume that $\u_{m,0}\in\mathcal{B}_{1,m},$ where $\mathcal{B}_{1,m}$ is the absorbing set defined in \eqref{314b}. Then, for any $\e>0$, there exist $T_2(\e)$ and $m_2(\e)$ such that the solution $\u_m(\cdot)$ of the problem \eqref{4.1} with the initial condition $\u_{0,m}$ satisfies:
	\begin{align}
	\int_{\Omega_m\backslash\Omega_k}|\u_m(x,t)|^2\d x\leq \e, 
	\end{align}
	for $t\geq T_2(\e)$ and $m\geq k\geq m_2(\e)$, where $T_2(\e)$ and
	$m_2(\e)$  are constants depending only on $\e$.
\end{lemma}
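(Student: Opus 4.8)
The plan is to mimic the truncation argument of Lemma \ref{lem4.2}, transplanting it from the exterior tail $\Omega\backslash\Omega_m$ to the annular tail $\Omega_m\backslash\Omega_k$. First I would reuse the same radial cutoff $\chi_R(x)=\chi(x/R)$ from the proof of Lemma \ref{lem4.2} (so that $0\le\chi_R\le1$, $\chi_R\equiv0$ on $\{|x|<R\}$, $\chi_R\equiv1$ on $\{|x|\ge2R\}$ and $\|\nabla\chi_R\|_{\mathbb{L}^{\infty}(\R^2)}\le C/R$) and test the first equation of \eqref{7.24} against $\chi_R^2\u_m$. Since $\u_m$ is extended by zero outside $\Omega_m$ and $\f_m$ is supported in $\Omega_m$ with $\|\chi_R\f_m\|_{\L^2(\Omega)}\le\|\chi_R\f\|_{\L^2(\Omega)}$, the viscous, convective, Darcy, Forchheimer, pressure and forcing estimates reproduce verbatim the differential inequality
\begin{align*}
&\frac{1}{2}\frac{\d}{\d t}\|\chi_R\u_m(t)\|_{\L^2(\Omega)}^2+\frac{\mu}{2}\|\nabla(\chi_R\u_m(t))\|_{\L^2(\Omega)}^2+\alpha\|\chi_R\u_m(t)\|_{\L^2(\Omega)}^2\\
&\quad\le M_1\|\chi_R\f\|_{\L^{2}(\Omega)}+\frac{C}{R}\left[(\mu+1)\|\u_m(t)\|_{\V_{\Omega_m}}^2+\|p_m(t)\|_{\mathrm{L}^2(\Omega_m)}^2+M_1^2(1+\mu+M_1^2)\right]\\
&\qquad+\frac{\beta^2}{\mu\uplambda_1}\|\chi_R[|\u_m(t)|^{r-1}\u_m(t)]\|_{\L^2(\Omega)}^2,
\end{align*}
where $M_1$ denotes the absorbing radius \eqref{314b}, which is uniform in $m$ because $\uplambda_{1,m}\ge\uplambda_1$ and $\|\f_m\|_{\V_{\Omega_m}'}\le\|\f\|_{\V_{\Omega}'}$.

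Next I would control the two $m$-dependent terms on the right exactly as in Lemma \ref{lem4.2}. The pressure $p_m$ solves $-\Delta p_m=\sum_{i,j}\partial_i\partial_j(u_{m,i}u_{m,j})+\beta\nabla\cdot(|\u_m|^{r-1}\u_m)$, so the analogue of \eqref{3p29} yields a time-averaged bound $\int_t^{t+\theta}\|p_m(s)\|_{\mathrm{L}^2(\Omega_m)}^2\d s\le\frac{C(M_1^2+\beta M_1^{2r-2})}{\mu^2}\big(\frac{2}{\mu\uplambda_1}+\theta\big)\|\f\|_{\V_{\Omega}'}^2$ with an $m$-independent constant; the Forchheimer tail is split over the three regions $\{|\u_m|^r\le R\}$, $\{R\le|\u_m|^r\le2R\}$ and $\{|\u_m|^r\ge2R\}$ precisely as before, each piece decaying like a negative power of $R$. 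Inserting these into the variation-of-constants formula and summing the geometric series over the unit intervals $[j,j+1]$ (as in \eqref{619}) produces the analogue of \eqref{4.20}: a bound on $\|\chi_R\u_m(t)\|_{\L^2(\Omega)}^2$ whose right-hand side is the sum of a term $\sim e^{-\mu\uplambda_1 t}$, the term $\frac{2M_1\|\chi_R\f\|_{\L^2(\Omega)}}{\mu\uplambda_1}$, and several terms of order $1/R$, all with constants independent of $m$.

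The last step is geometric. Because $\{\Omega_k\}$ is an expanding sequence with $\Omega_k\to\Omega$, for each fixed $R$ there is an index $m_2(\e)$ such that $\Omega\backslash\Omega_k\subset\{|x|\ge2R\}$ whenever $k\ge m_2(\e)$; hence $\chi_R\equiv1$ on $\Omega_m\backslash\Omega_k\subset\Omega\backslash\Omega_k$ for all $m\ge k$, giving $\int_{\Omega_m\backslash\Omega_k}|\u_m(t)|^2\d x\le\|\chi_R\u_m(t)\|_{\L^2(\Omega)}^2$. Since $\f\in\H_{\Omega}$ forces $\|\chi_R\f\|_{\L^2(\Omega)}\to0$ as $R\to\infty$, I first choose $R$ (equivalently $m_2(\e)$) large to make the $1/R$ and forcing terms below $\e/2$, and then $T_2(\e)$ large to absorb the exponentially decaying term, which closes the estimate. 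The principal obstacle I anticipate is not any individual inequality but securing the \emph{uniformity in $m$} of every constant — above all the uniform absorbing radius $M_1$ and the uniform $\mathrm{L}^2$-bound for $p_m$ on the varying bounded domains $\Omega_m$ — together with the bookkeeping that couples the cutoff scale $R$ to the index $k$ so that the annulus $\Omega_m\backslash\Omega_k$ genuinely lies in the region where $\chi_R\equiv1$.
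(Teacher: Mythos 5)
Your proposal is correct and follows exactly the route the paper intends: the paper omits the proof, stating only that it proceeds "in a similar fashion as in Lemma \ref{lem4.2}," and your argument is precisely that adaptation, with the right attention to the two points that actually need checking (uniformity in $m$ of the absorbing radius and of the pressure bound, and the coupling of the cutoff scale $R$ to the index $k$ so that $\chi_R\equiv 1$ on $\Omega_m\backslash\Omega_k$). Nothing further is needed.
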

\subsection{Upper semicontinuity} Let us now prove the main results of this section,  the upper
semicontinuity of the global attractors $\mathscr{A}_m$ to the global attractor $\mathscr{A}$ as $m\to\infty$.
\begin{lemma}\label{lem7.3}
	If $\u_{m,0}\in\mathscr{A}_{m},$ $m=1,2,\ldots$, then there exits a $\u_0\in\mathscr{A},$ such that up to a subsequence
	\begin{align}\label{724}
	\S_m(\cdot)\u_{m,0}\xrightharpoonup{w}\S(\cdot)\u_0 \ \text{ in } \ \mathrm{L}^2(-T,T;\V_{\Omega}),
	\end{align}
	and 
	\begin{align}\label{725}
	\S_m(t)\u_{m,0}\xrightharpoonup{w}\S(t)\u_0 \ \text{ in } \ \H_{\Omega}, \ \text{ for each } \ t\in\R. 
 	\end{align}
\end{lemma}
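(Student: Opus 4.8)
The plan is to realize each point of $\mathscr{A}_m$ as sitting on a complete bounded trajectory, to extend everything by zero to the fixed domain $\Omega$, to extract weak limits via a priori bounds that are uniform in $m$, and finally to identify the limiting trajectory as living on $\mathscr{A}$. First I would observe that, since $\f_m$ is the zero-extension of $\f\big|_{\Omega_m}$, testing against zero-extended elements of $\V_{\Omega_m}$ gives $\|\f_m\|_{\V_{\Omega_m}'}\le\|\f\|_{\V_\Omega'}$ uniformly in $m$; consequently the absorbing-ball radius $M_1$ from Proposition \ref{prop32} (and hence the diameter of $\mathscr{A}_m$ in $\H_{\Omega_m}$) is bounded by a constant independent of $m$. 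Because $\mathscr{A}_m$ is invariant, each $\u_{m,0}\in\mathscr{A}_m$ lies on a complete trajectory $\u_m(t)=\S_m(t)\u_{m,0}$, $t\in\R$, with $\sup_{t\in\R}\|\u_m(t)\|_{\H_{\Omega_m}}\le M_1$. The key structural point is that the zero-extension of a function in $\V_{\Omega_m}$ (resp. $\H_{\Omega_m}$) belongs to $\V_\Omega$ (resp. $\H_\Omega$) with the same norm, since elements of $\V_{\Omega_m}$ are divergence-free $\H^1_0$-functions on the bounded smooth domain $\Omega_m$; I therefore regard $\u_m(\cdot)$ as defined on all of $\Omega$. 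Integrating the energy identity \eqref{3p5} over translates of $[-T,T]$ and using the uniform $\H$-bound then yields, independently of $m$, a bound on $\u_m$ in $\mathrm{L}^2(-T,T;\V_\Omega)\cap\mathrm{L}^{r+1}(-T,T;\wi\L^{r+1}_\Omega)$, while the equation \eqref{kvf} bounds $\partial_t\u_m$ in $\mathrm{L}^{\frac{r+1}{r}}(-T,T;\V_\Omega')$, for every $T>0$.

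With these uniform bounds in hand, the second step is compactness. By Banach--Alaoglu and a diagonal argument over $T\in\N$ and truncation radii $R\in\N$, I would extract a subsequence with $\u_m\xrightharpoonup{w^*}\u$ in $\mathrm{L}^\infty(-T,T;\H_\Omega)$, $\u_m\xrightharpoonup{w}\u$ in $\mathrm{L}^2(-T,T;\V_\Omega)$ and in $\mathrm{L}^{r+1}(-T,T;\wi\L^{r+1}_\Omega)$; and, exactly as in the proof of Lemma \ref{lem4.1} (truncation by the cutoff $\chi(|x|^2/R^2)$, the equicontinuity estimate \eqref{335}, and the compact embedding $\H^1_0(\Omega_{2R})\subset\L^2(\Omega_{2R})$ combined with Theorem 16.3, \cite{MMR}), $\u_m\to\u$ strongly in $\mathrm{L}^2(-T,T;\L^2(\Omega_R))$ for each $R>0$. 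This local strong convergence is precisely what lets me pass to the limit in the nonlinear terms $\B(\u_m)$ and $\mathcal{C}(\u_m)$ tested against any $\psi\in\mathscr{V}$: such $\psi$ has compact support, hence contained in some $\Omega_R$ and in $\Omega_m$ for all large $m$. Together with $\f_m\to\f$ and the weak limits above, this shows that $\u$ is a complete, bounded weak solution of \eqref{kvf} on the full domain $\Omega$.

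Finally, since $\u$ is a globally bounded complete trajectory of the limit system, it is contained in the maximal bounded invariant set, i.e.\ $\u(t)\in\mathscr{A}$ for all $t\in\R$; setting $\u_0:=\u(0)\in\mathscr{A}$ and invoking the uniqueness in Theorem \ref{weake} gives $\u(t)=\S(t)\u_0$, which is \eqref{724}. The pointwise-in-$t$ convergence \eqref{725} is obtained by upgrading as in \eqref{3.52}: the family $t\mapsto(\u_m(t),\psi)$ is equibounded and equicontinuous on compact time intervals by \eqref{335}, so the weak limit holds for every $t$ and then, by density of $\mathscr{V}$ in $\H_\Omega$, against every test function in $\H_\Omega$. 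The main obstacle I anticipate is making the a priori estimates genuinely uniform in $m$ while the underlying spaces $\H_{\Omega_m},\V_{\Omega_m}$ themselves vary with $m$; the zero-extension identification is what reconciles this, and some care is needed to verify that it is compatible with the operators $\A$, $\B$ and $\mathcal{C}$, equivalently that the extended $\u_m$ solves the $\Omega$-equation when tested against functions supported in $\Omega_m$.
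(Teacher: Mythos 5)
Your proposal is correct and follows essentially the same route as the paper: uniform-in-$m$ bounds from the attractors sitting inside a fixed absorbing ball, weak compactness plus local strong compactness via truncation (as in Lemma \ref{lem4.1}), identification of the limit as a complete bounded trajectory of \eqref{kvf} and hence a point of $\mathscr{A}$, and the pointwise-in-$t$ convergence via equicontinuity of $t\mapsto(\u_m(t),\psi)$. The only difference is that you make explicit the zero-extension identification of $\H_{\Omega_m},\V_{\Omega_m}$ inside $\H_\Omega,\V_\Omega$ and the uniform bound $\|\f_m\|_{\V_{\Omega_m}'}\le\|\f\|_{\V_\Omega'}$, details the paper leaves implicit.
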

\begin{proof}
	Given that $\u_{m,0}\in\mathscr{A}_{m},$ $m=1,2,\ldots$ and hence $\S_m(t)\u_{m,0}\in\mathscr{A}_{m}$, for each $t\in\R$ and using \eqref{314b}, we find 
	\begin{align}\label{726}
	\|\S_m(t)\u_{m,0}\|_{\H}\leq M_1, \ \text{ for all } \ t\in\R. 
	\end{align}
	Estimates like \eqref{3p8} ensures that for each $T>0$, the sequence $\{\S_m(t)\u_{m,0}\}_{m\in\N}$ is uniformly bounded in $\mathrm{L}^{\infty}(-T,T;\H_{\Omega})\cap\mathrm{L}^{2}(-T,T;\V_{\Omega})$ and $\left\{\frac{\partial}{\partial t}\S_m(t)\u_{m,0}\right\}_{m\in\N}$ is bounded in $\mathrm{L}^{2}(-T,T;{\V}_{\Omega}') $. These uniform bounds guarantee us the existence of a subsequence $\{\u_{{m_k},0}\}_{k\in\N}$ of $\{\u_{m,0}\}_{m\in\N}$ such that 
	\begin{equation}\label{727}
	\left\{
	\begin{aligned}
	\S_{m_k}(t)\u_{m_k,0}&\xrightharpoonup{w^*}\u \ \text{ in }\ \mathrm{L}^{\infty}(-T,T;\H_{\Omega}), \\
	\S_{m_k}(t)\u_{m_k,0}&\xrightharpoonup{w}\u \ \text{ in }\ \mathrm{L}^{2}(-T,T;\V_{\Omega}), \\
		\S_{m_k}(t)\u_{m_k,0}&\xrightharpoonup{w}\u \ \text{ in }\ \mathrm{L}^{r+1}(-T,T;\wi\L^{r+1}_{\Omega}), \\
	\frac{\partial}{\partial t}\S_m(t)\u_{m,0}&\xrightharpoonup{w} \frac{\partial\u}{\partial t} \ \text{ in }\ \mathrm{L}^{2}(-T,T;{\V}_{\Omega}').
	\end{aligned}
	\right.
	\end{equation}
	From the first convergence in \eqref{727}, it is clear that 
	\begin{align}
		\S_{m_k}(t)\u_{m_k,0}\xrightharpoonup{w}\u \ \text{ in }\ \mathrm{L}^{2}(-T,T;\H_{\Omega}).
	\end{align}
	Arguing similarly as in Lemma \ref{lem4.1}, one can show that $\u(\cdot)$ is a weak solution to \eqref{kvf} defined on $\R$ and $\u\in\C(\R;\H_{\Omega})$. Thus, we can write 
$
	\u(t)=\S(t)\u(0)
$ and obtain 
	\begin{align}
\S_{m_k}(\cdot)\u_{m_k,0}\xrightharpoonup{w}\S(\cdot)\u(0)\ \text{ in }\ \mathrm{L}^{2}(-T,T;\H_{\Omega}),
\end{align}
which proves \eqref{724}. 

Our next aim is to establish \eqref{725}. Let us now fix $t^*\in[-T,T]$. From the estimate \eqref{726}, we know that the sequence $\{\S_m(t^*)\u_{m,0}\}_{m\in\N}$ is bounded in $\H_{\Omega}$. Therefore there exists $\widetilde{\u}\in\H_{\Omega}$ and a subsequence $\{\S_{m_k}(t^*)\u_{m_k,0}\}_{k\in\N}$ of $\{\S_m(t^*)\u_{m,0}\}_{m\in\N}$ such that $\S_{m_k}(t^*)\u_{m_k,0}\xrightharpoonup{w}\widetilde{\u}$ as $k\to\infty$ in $\H_{\Omega}$. Thus $\u(t)$ is a solution of \eqref{kvf} with $\u(t^*)=\widetilde{\u}$. Since $t^*\in[-T,T]$ is arbitrary, we obtain 
	\begin{align}\label{730}
\S_{m_k}(t)\u_{m_k,0}\xrightharpoonup{w}\S(t)\u(0) \ \text{ in } \ \H, \ \text{ for each } \ t\in\R,
\end{align}
as desired. Now, it is left to show that $\u_0\in\mathscr{A}$. Using the weakly lower-semicontinuity of norm, \eqref{730} and \eqref{726}, we find 
\begin{align}
\|\S(t)\u(0)\|_{\H}\leq\liminf_{k\to\infty}\|\S_{m_k}(t)\u_{m_k,0}\|_{\H}\leq M_1, \ \text{ for all }\ t\in\R,
\end{align}
which implies that the solution $\S(t)\u(0)$ defined on $\R$ and bounded. Hence, $\S(t)\u(0)\in\mathscr{A},$ for all $t\in\R$ and in particular $\u(0)=\u_0\in\mathscr{A}$, which completes the proof.
\end{proof}
The following Lemma improves the result obtained in Lemma \ref{lem7.3}. 
\begin{lemma}\label{lem6.5}
	If $\u_{m,0}\in\mathscr{A}_m$, $m=1,2,\ldots$, then there exists $\u_0\in\mathscr{A}$ such that up to a subsequence \begin{align}\label{7.32}\u_{m,0}\to \u_0\ \text{ strongly   in } \ \H.\end{align}
\end{lemma}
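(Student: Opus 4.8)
The plan is to upgrade the weak convergence of Lemma \ref{lem7.3} to strong convergence in $\H_{\Omega}$ by an energy-equality argument of exactly the same flavour as the asymptotic compactness proof (Proposition \ref{prop4.4}). Since $\mathscr{A}_m$ is invariant, each $\u_{m,0}\in\mathscr{A}_m$ lies on a complete bounded orbit, so $\u_m(t):=\S_m(t)\u_{m,0}$ is defined for all $t\in\R$ with $\u_m(t)\in\mathscr{A}_m$, hence $\|\u_m(t)\|_{\H}\le M_1$ for every $t$. Extending each $\u_m(t)$ by zero to $\Omega$ (which preserves the $\H$, $\V$ and $\wi\L^{r+1}$ norms and places the functions in $\V_{\Omega}$), Lemma \ref{lem7.3} gives, along a subsequence, $\u_m\xrightharpoonup{w}\u$ in $\mathrm{L}^2(-T,0;\V_{\Omega})$ and $\u_m(t)\xrightharpoonup{w}\u(t)=\S(t)\u_0$ in $\H_{\Omega}$ for each $t$, with $\u_0\in\mathscr{A}$. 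In particular $\u_{m,0}=\u_m(0)\xrightharpoonup{w}\u_0$, so it suffices to prove the matching of norms $\|\u_{m,0}\|_{\H}\to\|\u_0\|_{\H}$; together with weak convergence in the Hilbert space $\H$ this yields \eqref{7.32}.

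To obtain the norms, I would write the energy identity in the variation-of-constants form \eqref{44}. Adding and subtracting $\mu\uplambda_1\|\u_m\|_{\H}^2$ in the energy equality \eqref{521} for $\u_m$ and using $\langle\f_m,\u_m\rangle=\langle\f,\u_m\rangle$ gives, for each fixed $T>0$,
\begin{align*}
\|\u_m(0)\|_{\H}^2&=\|\u_m(-T)\|_{\H}^2e^{-\mu\uplambda_1 T}\\
&\quad+2\int_{-T}^0e^{\mu\uplambda_1 s}\Big[\langle\f,\u_m(s)\rangle-\langle\!\langle\u_m(s)\rangle\!\rangle^2-\alpha\|\u_m(s)\|_{\H}^2-\beta\|\u_m(s)\|_{\wi\L^{r+1}}^{r+1}\Big]\d s,
\end{align*}
and the same identity holds for the complete trajectory $\u(\cdot)=\S(\cdot)\u_0$ in $\mathscr{A}$. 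Now I take $\limsup_{m\to\infty}$: the first term is bounded by $M_1^2e^{-\mu\uplambda_1 T}$; the linear term $\int_{-T}^0 e^{\mu\uplambda_1 s}\langle\f,\u_m(s)\rangle\d s$ converges to its $\u$-analogue because $s\mapsto e^{\mu\uplambda_1 s}\f\in\mathrm{L}^2(-T,0;\V_{\Omega}')$ and $\u_m\xrightharpoonup{w}\u$ in $\mathrm{L}^2(-T,0;\V_{\Omega})$; and the three remaining convex continuous functionals are weakly lower semicontinuous, so the minus signs turn their $\liminf$ into the desired $\limsup$ bound. As in Proposition \ref{prop4.4} I would use that $\big(\int_{-T}^0 e^{\mu\uplambda_1 s}\langle\!\langle\cdot\rangle\!\rangle^2\,\d s\big)^{1/2}$, $\big(\int_{-T}^0 e^{\mu\uplambda_1 s}\|\cdot\|_{\H}^2\,\d s\big)^{1/2}$ and $\big(\int_{-T}^0 e^{\mu\uplambda_1 s}\|\cdot\|_{\wi\L^{r+1}}^{r+1}\,\d s\big)^{1/(r+1)}$ are norms equivalent to the standard ones, hence weakly lower semicontinuous.

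Combining these limits and identifying the resulting integral with the energy identity for $\u$ gives $\limsup_{m}\|\u_{m,0}\|_{\H}^2\le \|\u_0\|_{\H}^2+M_1^2e^{-\mu\uplambda_1 T}$, and letting $T\to\infty$ produces $\limsup_{m}\|\u_{m,0}\|_{\H}\le\|\u_0\|_{\H}$. Since weak convergence already yields $\|\u_0\|_{\H}\le\liminf_m\|\u_{m,0}\|_{\H}$, the norms converge, and weak convergence plus norm convergence in the Hilbert space $\H$ force $\u_{m,0}\to\u_0$ strongly, which is \eqref{7.32}. The main obstacle I anticipate is the bookkeeping around the moving domain: I must justify that extension by zero keeps every quantity in the energy identity unchanged and that the fixed-$\Omega$ functional $\langle\!\langle\cdot\rangle\!\rangle^2=\mu\|\cdot\|_{\V}^2-\tfrac{\mu\uplambda_1}{2}\|\cdot\|_{\H}^2$ remains a nonnegative, equivalent norm on the zero-extended functions (which uses the Poincar\'e inequality on $\Omega$), and that \eqref{521} is genuinely available both for each $\u_m$ and for the limit $\u$ on the whole line. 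The uniform tail estimates of Lemma \ref{lem4.2} and its companion may be invoked if one prefers to rule out loss of mass at spatial infinity directly, but the convexity and weak lower semicontinuity route above avoids passing to the limit in the nonlinear terms altogether.
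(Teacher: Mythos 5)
Your proposal is correct and follows essentially the same route as the paper's proof: both write $\|\u_{m,0}\|_{\H}^2$ via the variation-of-constants form of the energy equality over a long backward time interval along the complete bounded orbit in $\mathscr{A}_m$, invoke Lemma \ref{lem7.3} for the weak convergences, use weak lower semicontinuity of the exponentially weighted convex functionals exactly as in Proposition \ref{prop4.4} to get $\limsup_m\|\u_{m,0}\|_{\H}^2\le\|\u_0\|_{\H}^2+M_1^2e^{-\mu\uplambda_1 T}$, and let $T\to\infty$ before concluding by weak convergence plus norm convergence in the Hilbert space $\H$. The only cosmetic difference is that you parametrize the orbit on $[-T,0]$ while the paper writes $\u_{m,0}=\S_m(T)\S_m(-T)\u_{m,0}$; your explicit attention to the zero-extension across the expanding domains is a point the paper leaves implicit.
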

\begin{proof}
The proof is similar to that of Proposition \ref{prop4.4}. A calculation similar to \eqref{44} yields 
	\begin{align}\label{732}
\|\u_m(t)\|_{\H}^2&= \|\u_m(s)\|_{\H}^2e^{-\mu\lambda_1 (t-s)}\nonumber\\&\quad+2\int_s^te^{-\mu\lambda_1(t-s)}\left[\langle \f,\u_m(s)\rangle-\langle\!\langle\u_m(s)\rangle\!\rangle^2-\alpha\|\u_m(s)\|_{\H}^2-\beta\|\u_m(s)\|_{\wi\L^{r+1}}^{r+1}\right]\d s.
\end{align}
Note that $\u_{m,0}=\S_m(T)\S_m(-T)\u_{m,0}$, for each $T\in\R$. Using \eqref{732}, we find 
\begin{align}
\|\u_{m,0}\|_{\H}^2&=\|\S_m(T)\S_m(-T)\u_{m,0}\|_{\H}^2\nonumber\\&= \|\S_m(T_0)\S_m(-T)\u_{m,0}\|_{\H}^2e^{-\mu\lambda_1 (T-T_0)}+2\int_{T_0}^Te^{-\mu\lambda_1(T-s)}\langle \f,\S_m(s)\S_m(-T)\u_{m,0}\rangle\d s\nonumber\\&\quad-2\int_{T_0}^Te^{-\mu\lambda_1(T-s)}\Big[\langle\!\langle\S_m(s)\S_m(-T)\u_{m,0}\rangle\!\rangle^2+\alpha\|\S_m(s)\S_m(-T)\u_{m,0}\|_{\H}^2\nonumber\\&\qquad+\beta\|\S_m(s)\S_m(-T)\u_{m,0}\|_{\wi\L^{r+1}}^{r+1}\Big]\d s.
\end{align}
Using Lemma \ref{lem7.3} (see \eqref{725}), one can easily see that there exists a $\u_0\in\mathscr{A}$ such that up to a subsequence 
\begin{align}\label{734}
\u_{m,0}\xrightharpoonup{w}\u_0\ \text{ weakly in }\ \H_{\Omega},
\end{align}
and the weakly lower-semicontinuity property of the $\H_{\Omega}$ norm gives 
\begin{align}\label{735}
\|\u_0\|_{\H_{\Omega}}\leq\liminf\limits_{m\to\infty}\|\u_{m,0}\|_{\H_{\Omega}}.
\end{align}
Since $\u_{m,0}\in\mathscr{A}_m$, the solution $\S_m(t)\u_{m,0}$ of the problem \eqref{7.24} is bounded on $\R$ and $\S_m(t)\u_{m,0}\in\mathscr{A}_m$, for all $t\in\R$. Hence, for each $T\geq 0$, one can easily get $\S_m(-T)\u_{m,0}\in\mathscr{A}_m$. Once again using Lemma \ref{lem7.3}, we can find a $\u_T\in\mathscr{A}$ such that, up to a subsequence 
\begin{align}
\S_m(t)\S_m(-T)\u_{m,0}\xrightharpoonup{w}\u_T\ \text{ weakly in }\ \H_{\Omega}, 
\end{align}
for all $t\in\R$. Proceeding similarly as in Proposition \ref{prop4.4}, we obtain 
\begin{align}\label{737}
\limsup\limits_{m\to\infty}\|\u_{m,0}\|_{\H}^2\leq\|\u_0\|_{\H}^2+(M_1^2-\|\S(T^*)\u_T\|_{\H}^2)e^{-\mu\lambda_1(T-T^*)}\leq \|\u_0\|_{\H}^2+M_1^2e^{-\mu\lambda_1(T-T^*)},
\end{align}
for all $T>T^*$. Since $\H_{\Omega}$ is a Hilbert space, passing $T\to\infty$ in \eqref{737} and using \eqref{734}-\eqref{735}, we get \eqref{7.32}, which completes the proof.
\end{proof}
Using Lemmas \ref{lem7.3} and \ref{lem6.5}, we obtain the following main theorem:
\begin{theorem}\label{thm4.1}
	Assume that $\f\in\H$ and $\mu>0$. Let $\mathscr{A}$ and  $\mathscr{A}_m$ be the global attractors corresponding to the problems \eqref{kvf} and \eqref{7.24}, respectively. Then 
	\begin{align}\label{6.3}
	\lim\limits_{m\to+\infty}\mathrm{dist}_{\H_{\Omega}}(\mathscr{A}_m,\mathscr{A})=0, 
	\end{align}
	where $\mathrm{dist}_{\H_{\Omega}}(\mathscr{A}_m,\mathscr{A})=\sup\limits_{\u\in\mathscr{A}_m}\mathrm{dist}_{\H_{\Omega}}(\u,\mathscr{A})$  is the Hausdorff semidistance of the space $\H_{\Omega}$. 
\end{theorem}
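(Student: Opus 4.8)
The plan is to argue by contradiction, reducing everything to the strong-convergence statement already secured in Lemma \ref{lem6.5}. Recall first that since each $\Omega_m\subset\Omega$, any element of $\mathscr{A}_m\subset\H_{\Omega_m}$ is identified with its extension by zero in $\H_{\Omega}$ (an isometric embedding), so that the semidistance $\mathrm{dist}_{\H_{\Omega}}(\mathscr{A}_m,\mathscr{A})$ is well defined. The key preliminary observation is that the quantity to be controlled is a supremum over a compact set: because $\mathscr{A}_m$ is compact in $\H_{\Omega_m}$ (hence in $\H_{\Omega}$) and the map $\u\mapsto\mathrm{dist}_{\H_{\Omega}}(\u,\mathscr{A})$ is $1$-Lipschitz, for each $m$ there is a point $\u_{m,0}\in\mathscr{A}_m$ realizing the supremum, i.e.
\begin{align*}
\mathrm{dist}_{\H_{\Omega}}(\mathscr{A}_m,\mathscr{A})=\mathrm{dist}_{\H_{\Omega}}(\u_{m,0},\mathscr{A}).
\end{align*}

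Next I would negate \eqref{6.3}. If it were false, there would exist $\e_0>0$ and a subsequence $m_k\to\infty$ such that
\begin{align*}
\mathrm{dist}_{\H_{\Omega}}(\u_{m_k,0},\mathscr{A})\geq\e_0, \ \text{ for all }\ k.
\end{align*}
Since $\u_{m_k,0}\in\mathscr{A}_{m_k}$, Lemma \ref{lem6.5} applies and produces, after passing to a further subsequence (not relabeled), an element $\u_0\in\mathscr{A}$ with $\u_{m_k,0}\to\u_0$ strongly in $\H_{\Omega}$. But then
\begin{align*}
\mathrm{dist}_{\H_{\Omega}}(\u_{m_k,0},\mathscr{A})\leq\|\u_{m_k,0}-\u_0\|_{\H_{\Omega}}\longrightarrow 0\ \text{ as }\ k\to\infty,
\end{align*}
which contradicts the lower bound $\e_0>0$. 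Hence \eqref{6.3} must hold, completing the argument.

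The genuine work will already have been carried out upstream, so I expect the theorem itself to be a soft deduction. Lemma \ref{lem7.3} supplies the weak convergence of the orbits $\S_{m_k}(\cdot)\u_{m_k,0}$ to a complete bounded orbit lying on $\mathscr{A}$ (using the uniform bounds inherited from \eqref{3p8} and the weak-continuity and passage-to-the-limit machinery of Lemma \ref{lem4.1}), while Lemma \ref{lem6.5} upgrades the weak convergence at $t=0$ to strong convergence via the energy-equation identity of Proposition \ref{prop4.4} together with the uniform tail estimate of Lemma \ref{lem4.2} that controls $\int_{\Omega\setminus\Omega_m}|\u|^2\d x$. Given these ingredients, the main obstacle is not analytic but bookkeeping: one must make sure the supremum defining the semidistance is genuinely attained (guaranteed here by compactness of $\mathscr{A}_m$; alternatively a maximizing sequence would serve equally well), so that a single worst-case sequence $\{\u_{m,0}\}$ can legitimately be extracted and fed into Lemma \ref{lem6.5}. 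No regularity beyond $\f\in\H$ and $\mu>0$ is needed, since all the required absorbing-set and tail bounds have already been established uniformly in $m$.
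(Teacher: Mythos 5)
Your argument is correct and is essentially identical to the paper's own proof: negate \eqref{6.3} to produce a sequence $\u_{m,0}\in\mathscr{A}_m$ with $\mathrm{dist}_{\H_{\Omega}}(\u_{m,0},\mathscr{A})\geq\e_0$, apply Lemma \ref{lem6.5} to extract a strongly convergent subsequence with limit in $\mathscr{A}$, and derive a contradiction. The only cosmetic difference is that you invoke compactness of $\mathscr{A}_m$ to realize the supremum exactly, whereas the paper implicitly works with a near-maximizing sequence; either suffices.
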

    \begin{proof}
    We prove the Theorem by using a contradiction. Let us assume that \eqref{6.3} does not hold true. Then there exists a fixed $\e_0>0$ and a sequence $\u_m\in\mathscr{A}_m$ such that 
    \begin{align}\label{6.25}
    \mathrm{dist}_{\H_{\Omega}}(\u_m,\mathscr{A})\geq\e_0>0, \ m=1,2,\ldots.\end{align} Using Lemma \ref{lem6.5}, we can find a subsequence $\{\u_{m_k}\}$ of $\{\u_m\}$ such that $	\lim\limits_{k\to+\infty}\mathrm{dist}_{\H_{\Omega}}(\u_{m_k},\mathscr{A})=0,$ which is a contradiction to \eqref{6.25} and it completes the proof.  
    \end{proof}

\begin{remark} We can consider the asymptotic behavior of solutions of the  2D CBF equations \eqref{1} in general unbounded domains also. In that case, one has to take the norm defined on $\V$ space as $\|\u\|_{\V}^2:=\|\u\|_{\L^2}^2+\|\nabla\u\|_{\L^2}^2$.  The Darcy coefficient $\alpha>0$ appearing in \eqref{1} helps us to get the existence of global attractors. For instance, from the inequality \eqref{5p3}, one can deduce that 
	\begin{align*}
	\|\u(t)\|_{\H}^2\leq\|\u_0\|_{\H}^2e^{-\alpha t}+\frac{1}{\mu\alpha}\|\f\|_{\V'}^2, 
	\end{align*}
	for all $t\geq 0$. Thus, we obtain 
		\begin{align*}
	\mathcal{B}_1:=\left\{\v\in\H:\|\v\|_{\H}\leq M_1\equiv \sqrt{\frac{2}{\mu\alpha}}\|\f\|_{\V'}\right\},
	\end{align*}
	is a bounded absorbing set  in $\H$ for the semigroup $\S(t)$. For the asymptotic compactness property, we can define  $\langle\!\langle\cdot,\cdot\rangle\!\rangle:\V\times\V\to\R$ by $$\langle\!\langle\u_1,\u_2\rangle\!\rangle=\mu(\nabla\u_1,\nabla\u_2)+\frac{\alpha}{2}(\u_1,\u_2),\ \text{ for all } \ \u_1,\u_2\in\V.$$ Similar changes can be made in Theorems \ref{thm52} and \ref{thm4.1} also. 
	\end{remark}

\section{Quasi-stability and existence of global as well as exponential attractors}\label{sec8}\setcounter{equation}{0}   In this section, we consider the 2D CBF equations \eqref{kvf} in bounded domains.  We show that the semigroup associated with the system \eqref{kvf} is quasi-stable. Moreover, making use of Theorem 3.4.11, \cite{ICh}, we show the existence of global as well as exponential attractors having finite fractal dimension. Quasi-stability and the existence of global as well as exponential attractors for the 2D Navier-Stokes equations and related hydrodynamic type equations have been established in \cite{ICh}. We point out that our model does not fall in the hydrodynamic type equations described in \cite{ICh} due to the presence of the nonlinear damping term. Quasi-stability results for the three dimensional Kelvin-Voigt fluid flow equations with ``fading memory'' is obtained in \cite{MTM4}. As we are using  Theorems 3.2.3 and 3.4.11, \cite{ICh} to obtain our main goal, the results obtained in this section are true for all $r\in[1,\infty)$.

Let $(\X,\S(t))$ be a dynamical system in some Banach space $\X$ and $\B\subset\X$. Assume that there exists (a) compact seminorms (see Definition 3.1.14, \cite{ICh}) $n_1(\cdot)$ and $n_2(\cdot)$ on the space $\X$, and (b) numbers $a_*,t_*>0$ and $0\leq q<1$  such that 
\begin{align}\label{552} 
\|\S(t)y_1-\S(t)y_2\|_{\X}\leq a_*\|y_1-y_2\|_{\X}, \ \text{ for every }\ y_1,y_2\in\B\ \text{ and }\ t\in[0,t_*],
\end{align}
and 
\begin{align}\label{552a}
\|\P_{t_*}y_1-\P_{t_*}y_2\|_{\X}\leq q\|y_1-y_2\|_{\X}+n_1(y_1-y_2)+n_2(\P_{t_*}y_1-\P_{t_*}y_2),
\end{align}
for every $y_1,y_2\in\B$. Then, under the conditions \eqref{552} and \eqref{552a},  using  Proposition 3.4.10, \cite{ICh}, we infer that the system  $(\X,\S(t))$ is quasi-stable on $\B\subset\X$. We follow Proposition 4.5.9, \cite{ICh} to obtain the following  Ladyzhenskaya squeezing property.

\begin{proposition}[Ladyzhenskaya's squeezing property]\label{prop6.12}
	Let $\Q_m=\mathrm{I}-\P_m$ and $\f\in\H$. Then, for every $0<q<1$, $0<T^*\leq T<\infty$, there exists $m_*=m(T^*,T,r,q)$ such that 
	\begin{align}\label{553}
	\|\Q_m[\S(t)\u-\S(t)\u_*]\|_{\H}\leq q\|\u-\u_*\|_{\H}, \ \text{ for all }\ t\in[T^*,T],\ m\geq m_*,
	\end{align}
	for any $\u$ and $\u_*$ from the set $\mathcal{D}$, where $\mathcal{D}=\left\{\u\in\V: \|\S(t)\u\|_{\V}\leq R,\ \text{ for all } \ t\in[0,T]\right\}$. 
\end{proposition}
\begin{proof}
	Let us define $\w=\u-\u_*$, where $\u(t)=\S(t)\u$ and $\u_*(t)=\S(t)\u_*$. Then $\w(\cdot)$ satisfies the following system:
	\begin{equation}\label{652}
	\left\{
	\begin{aligned}
	\frac{\d}{\d t}\w(t)+\mu\A\w(t)+\alpha\w(t)+\B(\w(t),\u(t))+\B(\u_*(t),\w(t))+\mathcal{C}(\u(t))-\mathcal{C}(\u_*(t))&=\mathbf{0}, \\
	\w(0)&=\w_0,
	\end{aligned}
	\right.
	\end{equation}
in $\H$ for all $t\in(0,T)$, where 	$\w_0=\u-\u_*$. 
	Note  that for $\varphi\in\V$, we have  $\mathrm{P}_m\varphi=\sum_{j=1}^{m}(\varphi,e_j)e_j$, $\A^{\frac{1}{2}}\mathrm{P}_m\varphi=\sum_{j=1}^{m}\lambda_j^{1/2}(\varphi,e_j)e_j$, $\mathrm{Q}_m\varphi=\sum_{j=m+1}^{\infty}(\varphi,e_j)e_j$, $\A^{\frac{1}{2}}\mathrm{Q}_m\varphi=\sum_{j=m+1}^{\infty}\lambda_j^{1/2}(\varphi,e_j)e_j$, 
	\begin{align*}
	\|\A^{\frac{1}{2}}\mathrm{Q}_m\varphi\|_{\H}^2=\sum_{j=m+1}^{\infty}\lambda_j|(\varphi,e_j)|^2\geq \lambda_{m+1}\sum_{j=m+1}^{\infty}|(\varphi,e_j)|^2=\lambda_{m+1}\|\Q_m\varphi\|_{\H}^2,
	\end{align*}
	and 
	\begin{align*}
	\|\A^{\frac{1}{2}}\mathrm{P}_m\varphi\|_{\H}^2=\sum_{j=1}^{m}\lambda_j|(\varphi,e_j)|^2\leq \lambda_{m}\sum_{j=1}^{m}|(\varphi,e_j)|^2=\lambda_{m}\|\P_m\varphi\|_{\H}^2.
	\end{align*}
	That is, we get \begin{align}\label{655}\|\Q_m\varphi\|_{\V}\geq \sqrt{\lambda_{m+1}}\|\Q_m\varphi\|_{\H}\ \text{ and }\ \|\P_m\varphi\|_{\V}\leq \sqrt{\lambda_m}\|\P_m\varphi\|_{\H}.\end{align}
	We define 	$\p(t)=\mathrm{P}_m\w(t)$ and 	$\q(t)=\mathrm{Q}_m\w(t)$.	Let us take the inner product with $\q(t)$ to the first equation in \eqref{652} to obtain 
	\begin{align}\label{657}
	&\frac{1}{2}\frac{\d}{\d t}\|\q(t)\|^2_{\H}+\mu \|\q(t)\|_{\V}^2+\alpha\|\q(t)\|^2_{\H}\nonumber\\&=-b(\w(t),\u(t),\q(t))-b(\u_*(t),\w(t),\q(t))-\langle\mathcal{C}(\u(t))-\beta\mathcal{C}(\u_*(t)),\q(t)\rangle,
	\end{align}
for a.e. $t\in[0,T]$.	
	Using H\"older's, Ladyzhenskaya's  and Young's inequalities, and \eqref{655}, we estimate $b(\w,\u,\q)$ as 
\begin{align*}
|b(\w,\u,\q)|&=|b(\w,\q,\u)|\leq\|\q\|_{\V}\|\w\|_{\wi\L^4}\|\u\|_{\wi\L^4}\leq\frac{\mu}{8}\|\q\|_{\V}^2+\frac{4}{\mu}\|\w\|_{\H}\|\w\|_{\V}\|\u\|_{\H}\|\u\|_{\V}\nonumber\\&\leq\frac{\mu}{8}\|\q\|_{\V}^2+\frac{\e}{4}\|\w\|_{\V}^2+\frac{16}{\mu^2\e}\|\u\|_{\H}^2\|\u\|_{\V}^2\|\w\|_{\H}^2. 
\end{align*}	
Similarly, we estimate $b(\u_*,\w,\q)$ as 
\begin{align*}
|b(\u_*,\w,\q)|&\leq \frac{\mu}{8}\|\q\|_{\V}^2+\frac{\e}{4}\|\w\|_{\V}^2+\frac{16}{\mu^2\e}\|\u_*\|_{\H}^2\|\u_*\|_{\V}^2\|\w\|_{\H}^2. 
\end{align*}
For $r=1$, we know that $-\beta\langle\mathcal{C}(\u)-\mathcal{C}(\u_*),\q\rangle=-\beta\|\Q_m\w\|_{\H}^2$. For $r>1$, using Taylor's formula, H\"older's Gagliardo-Nirenberg's and Young's inequalities, we estimate $-\beta\langle\mathcal{C}(\u)-\mathcal{C}(\u_*),\q\rangle$ as 
\begin{align*}
\beta&|\langle\mathcal{C}(\u)-\mathcal{C}(\u_*),\q\rangle|\nonumber\\&=\beta\left|\left<\int_0^1\mathcal{C}'(\theta\u+(1-\theta)\u_*)\w\d\theta,\q\right>\right|\nonumber\\&\leq\beta r\sup_{0\leq\theta\leq 1}\|(\theta\u+(1-\theta)\u_*)\w\|_{\wi\L^{\frac{r+1}{r}}}\|\q\|_{\wi\L^{r+1}}\nonumber\\&\leq\beta Cr\left(\|\u\|_{\wi\L^{r+1}}+\|\u_*\|_{\wi\L^{r+1}}\right)^{r-1}\|\w\|_{\wi\L^{r+1}}\|\q\|_{\V}\nonumber\\&\leq\frac{\mu}{4}\|\q\|_{\V}^2+\frac{C\beta^2}{\mu}\left(\|\u\|_{\wi\L^{r+1}}+\|\u_*\|_{\wi\L^{r+1}}\right)^{2(r-1)}\|\w\|_{\wi\L^{r+1}}^2\nonumber\\&\leq \frac{\mu}{4}\|\q\|_{\V}^2+\frac{C\beta^2}{\mu}\left(\|\u\|_{\H}^{\frac{2}{r+1}}\|\u\|_{\V}^{\frac{r-1}{r+1}}+\|\u_*\|_{\H}^{\frac{2}{r+1}}\|\u_*\|_{\V}^{\frac{r-1}{r+1}}\right)^{2(r-1)}\|\w\|_{\H}^{\frac{4}{r+1}}\|\w\|_{\V}^{\frac{2(r-1)}{r+1}}.
\end{align*}
Combining the above estimates and substituting it in \eqref{657}, we deduce that 
\begin{align*}
&\frac{\d}{\d t}\|\q(t)\|^2_{\H}+\mu \|\q(t)\|_{\V}^2+\alpha\|\q(t)\|^2_{\H}\nonumber\\&\leq\e\|\w(t)\|_{\V}^2+\frac{32}{\mu^2\e}\left(\|\u(t)\|_{\H}^2\|\u(t)\|_{\V}^2+\|\u_*(t)\|_{\H}^2\|\u_*(t)\|_{\V}^2\right)\|\w(t)\|_{\H}^2\nonumber\\&\quad+\frac{C\beta^2}{\mu}\left(\|\u(t)\|_{\H}^{\frac{2}{r+1}}\|\u(t)\|_{\V}^{\frac{r-1}{r+1}}+\|\u_*(t)\|_{\H}^{\frac{2}{r+1}}\|\u_*(t)\|_{\V}^{\frac{r-1}{r+1}}\right)^{2(r-1)}\|\w(t)\|_{\H}^{\frac{4}{r+1}}\|\w(t)\|_{\V}^{\frac{2(r-1)}{r+1}}, 
\end{align*}
for a.e. $t\in[0,T]$. Since $\u,\u_*\in\mathcal{D}$, we find $\|\u(t)\|_{\V},\|\u_*(t)\|_{\V}\leq R$, for all $t\in[0,T]$, so that from the above inequality, we deduce that 
\begin{align}
&\frac{\d}{\d t}\|\q(t)\|^2_{\H}+\mu\lambda_{m+1} \|\q(t)\|_{\H}^2\leq \e\|\w(t)\|_{\V}^2+\frac{CR^4}{\mu^2\e}\|\w(t)\|_{\H}^2+\frac{C\beta^2R^{2(r-1)}}{\mu}\|\w(t)\|_{\H}^{\frac{4}{r+1}}\|\w(t)\|_{\V}^{\frac{2(r-1)}{r+1}},
\end{align}
where we used \eqref{655} also. A variation of constants formula yields 
\begin{align}
\|\Q_m\w(t)\|_{\H}^2&\leq \|\mathrm{Q}_m\w(0)\|_{\H}^2e^{-\mu\lambda_{m+1}t}+\e\int_0^te^{-\mu\lambda_{m+1}(t-s)}\|\w(s)\|_{\V}^2\d s\nonumber\\&\quad+\frac{CR^4}{\mu^2\e}\int_0^te^{-\mu\lambda_{m+1}(t-s)}\|\w(s)\|_{\H}^2\d s\nonumber\\&\quad+\frac{C\beta^2R^{2(r-1)}}{\mu}\int_0^te^{-\mu\lambda_{m+1}(t-s)}\|\w(s)\|_{\H}^{\frac{4}{r+1}}\|\w(s)\|_{\V}^{\frac{2(r-1)}{r+1}}\d s\nonumber\\&\leq \|\w(0)\|_{\H}^2e^{-\mu\lambda_{m+1}t}+\e\int_0^t\|\w(s)\|_{\V}^2\d s+\frac{CR^4}{\mu^3\e\lambda_{m+1}}\sup_{s\in[0,t]}\|\w(s)\|_{\H}^2\nonumber\\&\quad+\frac{C\beta^2R^{2(r-1)}}{\mu^{\frac{r+3}{r+1}}\lambda_{m+1}^{\frac{2}{r+1}}}\sup_{s\in[0,t]}\|\w(s)\|_{\H}^{\frac{4}{r+1}}\left(\int_0^t\|\w(s)\|_{\V}^2\d s\right)^{\frac{r-1}{r+1}},
\end{align}
for all $t\in[0,T]$. Using \eqref{3p12}, for $r>1$, we deduce that 
\begin{align}
\|\Q_m\w(t)\|_{\H}^2&\leq\left\{e^{-\mu\lambda_{m+1}t}+\left(\e+\frac{CR^4}{\mu^3\e\lambda_{m+1}}+\frac{C\beta^2R^{2(r-1)}}{\mu^2\lambda_{m+1}^{\frac{2}{r+1}}}\right)e^{\frac{4R^2t}{\mu}}\right\}\|\w(0)\|_{\H}^2,
\end{align}
for every $\e>0$. The above inequality easily implies \eqref{553}. The case of $r=1$ is easy. 
\end{proof}
\begin{proposition}[Quasi-stability]\label{prop5.13}
	For every $0<q<1$, $0<a\leq b<\infty$, and a forward invariant  set $\mathcal{B}$, which is a bounded set in $\H$,  there exists $m=m(a,b,q,\mathcal{B})$ such that 
	\begin{align}\label{571}
	\|\S(t)\u-\S(t)\u_*\|_{\H}\leq q	\|\S(\theta)\u-\S(\theta)\u_*\|_{\H}+\|\mathrm{P}_m[\S(t)\u-\S(t)\u_*]\|_{\H}, 
	\end{align}
	for every $t\in[a+\theta,b+\theta]$ and for all $\u,\u_*\in\mathcal{B}$ and $\theta\geq 0$. Furthermore, the system $(\S(t),\H)$ is quasi-stable on $\mathcal{B}\subset\H$. 
\end{proposition}
\begin{proof}
	We know that 
	\begin{align*}
	\|\S(t)\u-\S(t)\u_*\|_{\H}\leq\|\Q_m[\S(t)\u-\S(t)\u_*]\|_{\H}+\|\P_m[\S(t)\u-\S(t)\u_*]\|_{\H}
	\end{align*}
	and hence from \eqref{553}, we obtain 
	\begin{align}\label{650}
	\|\S(t)\u-\S(t)\u_*\|_{\H}\leq q\|\u-\u_*\|_{\H}+\|\P_m[\S(t)\u-\S(t)\u_*]\|_{\H}, 
	\end{align} 
	for every $t\in[a,b]$ and for all $\u,\u_*\in\mathcal{B}$. It is clear that $\|\P_m[\cdot]\|_{\H}$ is a seminorm on $\H$. Thus, the condition \eqref{552a}  holds with $\X=\H$, $n_1\equiv 0$ and $n_2=\|\P_m[\cdot]\|_{\H}$. The Lipschitz property \eqref{552}  follows from \eqref{313a}. Therefore, applying  Proposition 3.4.10, \cite{ICh}, we know that the system $(\S(t),\H)$ is quasi-stable on $\mathcal{B}\subset\H$. Using Exercise 4.3.12, \cite{ICh}, we infer that the relation in \eqref{650} can be written in the uniform form \eqref{571}. 
\end{proof}
Then, based on Theorem 3.4.11, \cite{ICh}, we have the following result. 

\begin{theorem}\label{thm614}
Let $\f\in\H$ and $r\in[\infty)$. 	Then the global attractor $\mathcal{A}_{\mathrm{glob}}$ of the  $(\H,\S(t))$ generated by the dynamical system \eqref{kvf} is a bounded set in $\V$ and possesses the following properties: 
\begin{enumerate}
\item [(1)]  $\mathcal{A}_{\mathrm{glob}}$ has a  finite fractal dimension $\mathrm{dim}_{\mathcal{F}}^{\H}(\mathcal{A}_{\mathrm{glob}})$ in $\H$. 
\item [(2)] For any full trajectory $\{\u(t):t\in\mathbb{R}\}$ from the attractor, $\u(t)$ is an absolutely continuous function with values in $\H$  and 
\begin{align}\label{712}
\sup_{t\in\mathbb{R}}\left\{\|\u_t(t)\|_{\H}+\|\u(t)\|_{\V}+\|\B(\u(t))\|_{\H}+\|\mathcal{C}(\u(t))\|_{\H}\right\}\leq C. 
\end{align}
	\end{enumerate} 
Moreover, the system $(\H,\S(t))$ possesses an exponential attractor $\mathcal{A}_{\mathrm{exp}}$, whose fractal
dimension $\mathrm{dim}_{\mathcal{F}}^{\H}(\mathcal{A}_{\mathrm{exp}})$ is finite in the phase space $\H$. 
\end{theorem}
\begin{proof}
	The existence of a global attractor $\mathcal{A}_{\mathrm{glob}}$ and the basic smoothness is proved Theorem \ref{thm3.4}. 	By Proposition \ref{prop5.13}, it is clear that the system satisfies Assumption 3.4.9, \cite{ICh} (see \eqref{552} and \eqref{552a}) on every forward invariant set, which is bounded in $\H$. Thus, we can apply Theorem 3.4.11, \cite{ICh} to conclude that  the global attractor has finite fractal dimension. 
	
	In order to prove the smoothness of the attractor given in \eqref{712}, we note from \eqref{571} that 
	\begin{align}\label{713}
	\|\u(t+\theta)-\u(t)\|_{\H}\leq q\|\u(t+\theta-1)-\u(t-1)\|_{\H}+\|\mathrm{P}_m[\u(t+\theta)-\u(t)]\|_{\H},
	\end{align}
	for every $t\in\mathbb{R}$ and for full trajectory $\{\u(t):t\in\R\}$ from the attractor. It can be easily seen that 
	\begin{align}\label{714}
	\|\mathrm{P}_m[\u(t+\theta)-\u(t)]\|_{\H}&=\left\|\mathrm{P}_m\left[\int_t^{t+\theta}\frac{\d\u}{\d \tau}(\tau)\d\tau\right]\right\|_{\H}\nonumber\\&\leq \int_t^{t+\theta}\|\mathrm{P}_{m}[-\mu\A\u(\tau)+\B(\u(\tau))+\alpha\u(\tau)+\beta\mathcal{C}(\u(\tau))]\|_{\H}\d\tau\nonumber\\&\leq C(m,\mathcal{A}_{\mathrm{glob}})|\theta|. 
	\end{align}
	Substituting \eqref{714} in \eqref{713}, we deduce that 
	\begin{align}\label{715}
	(1-q)\sup_{t\in\R}\|\u(t+\theta)-\u(t)\|_{\H}\leq C(m,\mathcal{A}_{\mathrm{glob}})|\theta|,
	\end{align}
	and on passing $\theta\to 0$, we finally obtain \eqref{712}. 
	
	In order to prove the existence of the fractal exponential attractors $\mathcal{A}_{\mathrm{exp}}$, we use the second part of Theorem 3.4.11, \cite{ICh}. For this, we need to check the H\"older continuity property $${\|\S(t_1)\u-\S(t_2)\u\|_{\H}\leq C_{\mathcal{B},T} |t_1-t_2|^{\gamma}, \ t_1,t_2\in[0,T],\ \u\in\mathcal{B},}$$ for some $0<\gamma\leq 1$, on some forward invariant absorbing set $\mathcal{B}$ for $(\H,\S(t))$. On an absorbing set $\mathcal{B}=\left\{\u\in\V:\|\u\|_{\V}\leq M\right\}$ in $\V$, we have 
	\begin{align*}
	\|\u_t(t)\|_{\V'}&\leq\mu \|\A\u(t)\|_{\V'}+\|\B(\u(t))\|_{\V'}+\alpha\|\u(t)\|_{\V'}+\beta\|\mathcal{C}(\u(t))\|_{\V'}+\|\f\|_{\V'}\nonumber\\&\leq \mu\|\u(t)\|_{\V}+\sqrt{2}\|\u(t)\|_{\H}\|\u(t)\|_{\V}+C\alpha\|\u(t)\|_{\H}+C\beta\|\u(t)\|_{\H}\|\u(t)\|_{\V}^{r-1}+\|\f\|_{\V'}\nonumber\\&\leq C(\mu,\alpha,\beta,\|\f\|_{\V'},M)<\infty.
	\end{align*}
Thus, using interpolation inequality \eqref{inp}, we obtain 
	\begin{align*}
	\|\S(t_1)\u-\S(t_2)\u\|_{\H}&\leq\|\S(t_1)\u-\S(t_2)\u\|_{\V'}^{1/2}\|\S(t_1)\u-\S(t_2)\u\|_{\V}^{1/2}\nonumber\\&\leq \sqrt{2M}\left\|\int_{t_2}^{t_1}\frac{\d\S(\tau)\u}{\d\tau}\d\tau\right\|_{\V'}^{1/2}\leq\sqrt{2M}\left(\int_{t_2}^{t_1}\left\|\frac{\d\S(\tau)\u}{\d\tau}\right\|_{\V'}\d\tau\right)^{1/2}\nonumber\\&\leq C(\mu,\alpha,\beta,\|\f\|_{\V'},M)|t_1-t_2|^{1/2},
	\end{align*}
	for all $\u\in\mathcal{B}$. From the relation \eqref{313a}, we also get 
	\begin{align*}
	\|\S(t)\u-\S(t)\u_*\|_{\H}\leq e^{\frac{M^2T}{\mu}}\|\u-\u_*\|_{\H},
	\end{align*}
for all $\u,\u_*\in\mathcal{B}$ and $t\in[0,T]$.	Thus the existence of the exponential attractors $\mathcal{A}_{\mathrm{exp}}$ follows by using Theorem 3.4.11, \cite{ICh} (see Theorem3.4.7,  \cite{ICh} also). 
	 The bounds for the fractal dimensions of $\mathcal{A}_{\mathrm{glob}}$  and $\mathcal{A}_{\mathrm{exp}}$ can be derived from Theorems 3.4.11 and 3.2.3, \cite{ICh} (see Theorem \ref{thm52} also).
\end{proof}

\medskip\noindent
{\bf Acknowledgments:} M. T. Mohan would  like to thank the Department of Science and Technology (DST), India for Innovation in Science Pursuit for Inspired Research (INSPIRE) Faculty Award (IFA17-MA110).


\begin{thebibliography}{99}
	
	\bibitem{FA}	F. Abergel,  Attractor for a Navier-Stokes flow in an unbounded domain, \emph{Math. Mod. Num. Anal.} {\bf 23} (1989),	359--70.

\bibitem{FA1} F. Abergel, Existence and finite dimensionality of the global attractor for evolution equations on
	unbounded domains, \emph{J. Diff. Equ.} {\bf 83} (1990), 85--108. 
	
	\bibitem{CTA} 	C. T. Anh and  P. T. Trang,  On the 3D Kelvin-Voigt-Brinkman-Forchheimer equations in 	some unbounded domains, \emph{Nonlinear Analysis: Theory, Methods $\&$ Applications}, {\bf 89} (2013), 36--54.
	
		\bibitem{SNA}	S.N. Antontsev and H.B. de Oliveira, The Navier–Stokes problem modified by an absorption term, \emph{Applicable Analysis}, {\bf 89}(12),  2010, 1805--1825.
	
	\bibitem{AVB}  A. V. Babin,  The attractor of a Navier–Stokes system in an unbounded channel-like domain, \emph{J. Dyn.	Differ. Equ.} {\bf 4} (1992), 555--84.


	
	
	

	
	\bibitem{JMB}	J. M. Ball,  Continuity properties of global attractors of generalized semiflows and the Navier–Stokes	equations, \emph{J. Nonlin. Sci. } {\bf 7} (1997), 475--502.
 
	
	
	
	




	\bibitem{PCAM} P. Cherier and  A. Milani,  \emph{Linear and Quasi-linear Evolution Equations	in Hilbert Spaces}, American Mathematical Society Providence,Rhode Island, 2012. 
	
	\bibitem{ACh} 	A. Cheskidov, Global attractors of evolutionary systems,	\emph{J. Dynam. Differential Equations}, {\bf 21}(2) (2009), 249--268. 
	
	

	
	
	\bibitem{ICh} I. Chueshov, \emph{Dynamics of Quasi-Stable Dissipative Systems}, Springer-Verlag, 2015.
	
		\bibitem{PGC} 	P. G. Ciarlet, \emph{Linear and Nonlinear Functional Analysis with Applications}, SIAM Philadelphia, 2013. 
	
	
	


\bibitem{PCCF1}   P. Constantin, C. Foias, and R. Temam, \emph{Attractors representing turbulent flows}, Memoirs of the American Mathematical Society, {\bf 53} (314), 1985.


\bibitem{PCCF}  P. Constantin and C. Foias, \emph{Navier-Stokes Equations}, Chicago Lectures in Mathematical Series, 1989.
	


\bibitem{CMD} C. M. Dafermos, Asymptotic stability in viscoelasticity, \emph{Arch. Rational Mech. Anal.},  {\bf 37} (1970), 297--308.




\bibitem{AEC} A. Eden, C. Foias, B. Nicolaenko, and R. Temam, \emph{Exponential Attractors for
Dissipative Evolution Equations}, Research in Application Mathematics 37,
John Wiley $\&$ Sons, New York, 1994.


		
	
		
		
		
			\bibitem{LCE} L. C. Evans, \emph{Partial Differential Equations}, Graduate studies in Mathematics, American Mathematical Society, 2nd Ed, 2010. 
			
		
			\bibitem{CFM} C. Foias, O. Manley,  R. Rosa,  et al, \emph{Navier-Stokes Equations and Turbulence}, Cambridge University		Press, Cambridge, 2001.
			
				\bibitem{DFHM} D. Fujiwara, H. Morimoto, An $L^r$-theorem of the Helmholtz decomposition of vector fields, \emph{J. Fac. Sci. Univ. Tokyo Sect. IA Math.,} {\bf 24} (1977),	685--700.
			
				\bibitem{GPG} 	G. P.  Galdi, An introduction to the Navier–Stokes initial-boundary value problem. In \emph{Fundamental directions in mathematical fluid mechanics}, Adv. Math. Fluid Mech. Birkh\"auser, Basel, 2000, pp. 1--70.

	\bibitem{KWH}	K. W. Hajduk and J. C. Robinson, Energy equality for the 3D critical convective Brinkman-Forchheimer equations, \emph{Journal of Differential Equations}, {\bf 263} (2017), 7141--7161.
	
	




\bibitem{AAI} A. A.  Ilyin, and E. S.  Titi,  Sharp estimates for the number of degrees of freedom for the damped-driven 2-D Navier-Stokes equations,  \emph{J. Nonlinear Sci.}, {\bf  16}(3) (2006), 233--253.

\bibitem{AIKP}  A. Ilyin,  K. Patni,  and S. Zelik,  Upper bounds for the attractor dimension of damped Navier-Stokes equations in $\mathbb{R}^2$, \emph{Discrete Contin. Dyn. Syst.}, {\bf 36}(4) (2016), 2085--2102.

	

\bibitem{NJ} N.  Ju, The $\H^1$-compact global attractor for the solutions to the Navier-Stokes equations in two-dimensional unbounded domains, \emph{Nonlinearity}, {\bf 13}, 1227--1238.
	
		
	
	
		
		
			
				\bibitem{VKK}  V. K. Kalantarov and S. Zelik, Smooth attractors for the Brinkman-Forchheimer equations with fast	growing nonlinearities, \emph{Commun. Pure Appl. Anal.}, {\bf 11} (2012) 2037--2054.
		
			
			
			\bibitem{KKMT} K. Kinra and M. T. Mohan,	Random attractors for 2D stochastic convective Brinkman-Forchheimer equations in some unbounded domains, \emph{submitted}, https://arxiv.org/pdf/2010.08753.pdf.
		
		
		
		


\bibitem{HKTY}   H. Kozono and T. Yanagisawa, 
$L^r$-variational inequality for vector fields and the Helmholtz-Weyl decomposition in bounded domains,
\emph{Indiana Univ. Math. J.}, {\bf  58}(4) (2009), 1853--1920.
	
	
	
		\bibitem{OAL}	O. A. Ladyzhenskaya, \emph{The Mathematical Theory of Viscous Incompressible Flow}, Gordon and Breach, New York, 1969.
		
		\bibitem{OAL1} O. A. Ladyzhenskaya, \emph{Attractors for Semigroups and Evolution Equations}, Lezioni Lincee, Cambridge
		University Press, Cambridge, 1991.
		
		\bibitem{OAL2} 	O. A. Ladyzhenskaya, On new estimates for the Navier-Stokes equations and globally stable attractors, \emph{Journal of Mathematical Sciences}, {\bf 77}(3) (1995), 3199--3206
		
		
	
	\bibitem{FLBY} F. Li,  B. You,  and Y. Xu,  Dynamics of weak solutions for the three dimensional Navier-Stokes equations with nonlinear damping, \emph{Discrete Contin. Dyn. Syst. Ser. B}, {\bf  23}(10) (2018),  4267--4284.
		
	\bibitem{LW} 	K. N.  Lu, and  B. X.  Wang,  Upper semicontinuity	of attractors for the Klein-Gordon-Schr\"odinger equation, \emph{Int. J. Bifurcation and Chaos}, {\bf  15}(1), (2005), 157--168.
		
		
		
	\bibitem{MMR} M. Marion,  and R. Temam,  Navier-Stokes equation-theory and approximation, \emph{Handbook of Numerical	Analysis}, {\bf 6} (1998), 503--689. 
		
			
		\bibitem{MTM1}  {M. T. Mohan}, On the convective Brinkman-Forchheimer equations, \emph{Submitted}. 
			
			
		\bibitem{MTM3} M. T. Mohan,  Global and exponential attractors  for the 3D Kelvin-Voigt-Brinkman-Forchheimer equations,   \emph{Discrete and Continuous Dynamical Systems-Series B}, {\bf 25}(9) (2020), 3393--3436.
		
	\bibitem{MTM4} 	M. T. Mohan, Global attractors, exponential attractors and determining modes for the three dimensional Kelvin-Voigt fluids with ``fading memory'', Accepted in \emph{Evolution Equations and Control Theory}, 2020. 
		
		



\bibitem{MTM5} {M. T. Mohan}, On the two-dimensional tidal dynamics system: stationary solution and stability, \emph{Applicable Analysis}, {\bf 99}(10) (2020), 1795--1826.
		
		
		
	\bibitem{DNAB} 		D. Nield and A. Bejan, \emph{Convection in Porous Media}, Springer, Berlin, 2006.
	
	
	\bibitem{LN}  L. Nirenberg,  On elliptic partial differential equations, \emph{Ann. Scuola Norm. Sup. Pisa} {\bf 3} 13 (1959), 115--162.
	
	
	
	
	
	
	
	

	
	
	
	
	
	
	
	
	
	
	
	
	
	



\bibitem{DPJV}  D. Pardo, J. Valero,  and A. Gim\'enez, Global attractors for weak solutions of the three-dimensional Navier-Stokes equations with damping, \emph{Discrete Contin. Dyn. Syst. Ser. B}, {\bf 24} (8) (2019), 3569--3590. 
	

	
	\bibitem{RR} R. Rosa, The global attractor for the 2D Navier-Stokes flow on some unbounded domains, \emph{Nonlinear Analysis: Theory, Methods $\&$ Applications}, {\bf 32} (1), {\bf 1998}, 71--85.
	


\bibitem{JCR1}	J. C. Robinson, Attractors and finite-dimensional behavior in the 2D Navier-Stokes equations, \emph{ISRN Mathematical Analysis}, Volume {\bf 2013}, Article ID 291823, 29 pages.

\bibitem{JCR4}	J.C. Robinson,  J.L. Rodrigo,  W. Sadowski, \emph{The three-dimensional Navier-Stokes equations, classical theory}, Cambridge Studies in Advanced Mathematics, Cambridge
University Press, Cambridge, UK, 2016. 





 \bibitem{XSYH1} X. Song and Y. Hou,  Attractors for the three-dimensional incompressible Navier-Stokes equations with damping, \emph{Discrete Contin. Dyn. Syst.}, {\bf 31}(1) (2011), 239--252.

 \bibitem{XSYH} X. Song and Y. Hou,  Uniform attractors for three-dimensional Navier-Stokes equations with nonlinear damping, \emph{J. Math. Anal. Appl.}, {\bf 422}(1) (2015),  337--351.


\bibitem{SS7} {S. S. Sritharan,} {\em Invariant Manifold Theory	of Hydrodynamic Transition,} Pitman Research Notes in Mathematics Series, 241, Longman Scientific and Technical, Harlow, 1990.

	

	\bibitem{Te} R. Temam,  \emph{Navier-Stokes Equations, Theory and Numerical Analysis}, North-Holland, Amsterdam, 1984.
	


\bibitem{Te1} R. Temam, 	\emph{Navier-Stokes Equations and Nonlinear Functional Analysis}, Second Edition, CBMS-NSF Regional Conference Series in Applied Mathematics, 1995.

\bibitem{Te2}	R. Temam,  \emph{Infinite-Dimensional Dynamical Systems in Mechanics and Physics}, Springer, New York, 1997.

 \bibitem{DUg} D. Ugurlu,  On the existence of a global attractor for the Brinkman-Forchheimer equations, \emph{Nonlinear Anal.}, {\bf 68} (7) (2008),  1986--1992.


\bibitem{BWSL}   B. Wang and S. Lin, Existence of global attractors for the three-dimensional Brinkman-Forchheimer equation, \emph{Math. Methods Appl. Sci.}, {\bf 31}(12) (2008), 1479--1495.


\bibitem{ZD} C. Zhao and J. Duan, Upper semicontinuity of global attractors for 2D Navier-Stokes equations, \emph{Int. J. Bifurcation and Chaos}, {\bf 22} (3) (2012),  1250046 (7 pages).





	
	
	
\end{thebibliography}
\end{document}